\newtheoremstyle{theorem}
  {15pt}          
  {15pt}  
  {\sl}  
  {\parindent}
  {\sc}  
  {. }   
  { }    
  {}     
\theoremstyle{theorem}
\newtheorem{lemma}{Lemma}[section]
\newtheorem{theorem}{Theorem}[section]
\newtheorem{corollary}{Corollary}[section]
\newtheoremstyle{defi}
  {15pt}          
  {15pt}  
  {\rm}  
  {\parindent}     
  {\sc}  
  {. }    
  { }    
  {}     
\theoremstyle{defi}
\newtheorem{remark}{Remark}[section]
\newcommand{\ds}{\displaystyle}
\newcommand{\f}{\frac}
 \title[Neumann boundary-value problem of FDE]{Wellposedness of Neumann boundary-value problems of space-fractional differential equations}
 \author[\normalsize H. Wang and D.P. Yang]{\normalsize Hong Wang $^1$ \qquad Danping Yang $^2$}
\begin{document}

 \vbox to 2.5cm { \vfill }


 \bigskip \medskip

\begin{abstract}
Fractional differential equation (FDE) provides an accurate description of transport processes that exhibit anomalous diffusion but introduces new mathematical difficulties that have not been encountered in the context of integer-order differential equation. For example, the wellposedness of the Dirichlet boundary-value problem of one-dimensional variable-coefficient FDE is not fully resolved yet. In addition, Neumann boundary-value problem of FDE poses significant challenges, partly due to the fact that different forms of FDE and different types of Neumann boundary condition have been proposed in the literature depending on different applications. 

We conduct preliminary mathematical analysis of the wellposedness of different Neumann boundary-value problems of the FDEs. We prove that five out of the nine combinations of three different forms of FDEs that are closed by three types of Neumann boundary conditions are well posed and the remaining four do not admit a solution. In particular, for each form of the FDE there is at least one type of Neumann boundary condition such that the corresponding boundary-value problem is well posed, but there is also at least one type of Neumann boundary condition such that the corresponding boundary-value problem is ill posed. This fully demonstrates the subtlety of the study of FDE.

 \medskip

{\it MSC 2010\/}: Primary 35R11, 65F10, 65M06, 65M22;                   Secondary 65T50

 \smallskip

{\it Key Words and Phrases}: fractional differential equation, Neumann boundary value problem, wellposedness

 \end{abstract}

 \maketitle

 \vspace*{-16pt}


\section{Introduction}\label{sec:1}

\setcounter{section}{1}
\setcounter{equation}{0}\setcounter{theorem}{0}

FDE is emerging as a powerful and competitive tool for modeling challenging phenomena such as anomalous diffusion as well as long-range time memory and spatial interactions, which cannot be modeled accurately by integer-order differential equation \cite{BurHal,CheLia,DelCar,Hil,Mag,MaiRab,MeeSik,MetKla04,OldSpa,Pod}. However, FDE introduces new mathematical difficulties that have not been encountered in the context of integer-order differential equation. The wellposedness of the boundary-value problem of FDE and the regularity of its solution are representative issues in the study of FDE.

A Galerkin weak formulation was derived for the homogeneous Dirichlet boundary-value problem of a one-dimensional conservative Caputo FDE of order $2-\beta$ for $0 < \beta < 1$ with a constant diffusivity coefficient $K$. The formulation was proved to be coercive and bounded on the fractional Sobolev space $H^{1-\beta/2}_0$ \cite{AdaFou}, which ensures its wellposedness in $H^{1-\beta/2}_0$ \cite{ErvRoo}. However, the Galerkin formulation loses its coercivity for a variable coefficient $K$ and the corresponding Galerkin finite element method (FEM) may diverge \cite{WanYan,WanYanZhu14}. A Petrov-Galerkin weak formulation was derived for a one-sided variable-coefficient FDE with proved weak coercivity, which ensures the wellposedness of the weak formulation \cite{WanYan}. A Petrov-Galerkin FEM was derived accordingly with a proved error estimate \cite{WanYanZhu15}. It is well known that for the homogeneous Dirichlet boundary condition, the conservative Riemann-Liouville FDE and conservative Caputo FDE coincide \cite{ErvRoo,WanYan}. However, it was shown in \cite{WanYanZhu14} that the inhomogeneous Dirichlet boundary-value problem of conservative Caputo FDE is well posed while that of the conservative Riemann-Liouville FDE does not admit a weak solution! 

A closely related issue is the regularity of the solutions of FDE. It was shown in \cite{WanYanZhu14,WanZha} that the true solution to the homogeneous Dirichlet boundary-value problem of a one-dimensional linear diffusion FDE of order $2-\beta$ with a constant diffusivity coefficient and a constant source term is not in the fractional Sobolev space $W^{1,1/\beta}$ \cite{AdaFou}. In particular, the true solution is not in the Sobolev space $H^1$ for $0 < \beta < 1/2$. Consequently, the optimal-order convergence rate in the energy norm, and, in particular, the Nitsche-lifting based proof of optimal-order $L^2$ error estimate are not valid \cite{ErvRoo}, as the regularity assumption of the true solution is not satisfied. Numerical experiments justify the observation \cite{WanYanZhu14,WanZha}. This is in sharp contrast to the regularity of solution to integer-order linear elliptic differential equation, which can be ensured by the smoothness of the data of the differential equation and of the boundary \cite{GilTru}. To date, there is no verifiable condition in the literature which can ensure the regularity of the solution to FDE. A thorough regularity analysis was present in \cite{JinLazPas} for the homogeneous Dirichlet boundary-value problem of a one-sided constant-coefficient space-fractional PDE in one space dimension, as the true solution can be found analytically in a closed form.
In short, the wellposedness of the homogeneous Dirichlet boundary-value problem of linear variable-coefficient FDE in one space dimension has not been resolved completely yet.

The Neumann boundary-value problem of FDE poses even more challenges than the Dirichlet boundary-value problem, partly due to the fact that different types of Neumann boundary conditions were proposed in the literature depending on different applications. This is in addition to different forms of FDE that have already demonstrated significantly different mathematical properties \cite{WanYanZhu14}. In this paper we conduct preliminary mathematical analysis of the wellposedness of different Neumann boundary-value problems of the different forms of FDEs. We prove that five out of the nine combinations of three different forms of FDEs that are closed by three types of Neumann boundary conditions are well posed and the remaining four do not admit a solution. In particular, for each form of the FDE there is at least one type of Neumann boundary condition such that the corresponding boundary-value problem is well posed, but there is also at least one type of Neumann boundary condition such that the corresponding boundary-value problem is ill posed. This fully demonstrates the subtlety of the study of FDE.

\section{Problem Formulation}
\setcounter{section}{2}\setcounter{equation}{0}

Let $C^m[0,1]$ be the space of continuously differentiable functions of orders up to $m$ on $[0,1]$. Let $C^{m,\delta}[0,1]$ be the space of H\"{o}lder continuous functions of order $m$ on $[0,1]$. Let $C^\infty_0(0,1)$ be the space of infinitely many times differentiable functions on $(0,1)$ that are compactly supported within $(0,1)$. Let $L^p(0,1)$, with $1 \le p \le +\infty$, be the normed space of $p$-th power Lebesgue integrable functions on $(0,1)$. Let $W^{m,p}(0,1)$ be the Sobolev space of functions on $(0,1)$ whose weak derivatives up to order $m$ are in $L^p(0,1)$. Let $H^\mu(0,1)$, with $\mu > 1/2$, be the fractional Sobolev space of order $\mu$ and $H^\mu_0(0,1)$ be the completion of $C^\infty_0(0,1)$ with respect to the Sobolev norm $\| \cdot \|_{H^\mu(0,1)}$. Let $H^{-\mu}(0,1)$ be the dual space of $H^\mu_0(0,1)$ \cite{AdaFou}.

Let $Dw := w^\prime(x)$ be the first-order differential operator. For $0 < \alpha < 1$ the left and right fractional integrals of order $\alpha$ are defined for any $w \in C[0,1]$ by \cite{Pod,SamKil}
$$\begin{array}{rl}
\ds {}_0I_x^{\alpha}w(x) := \f1{\Gamma(\alpha)} \int_0^x \f{w(s)}{(x-s)^{1-\alpha}}ds,\quad
{}_xI_1^{\alpha}w(x) := \f1{\Gamma(\alpha)} \int_x^1 \f{w(s)}{(s-x)^{1-\alpha}}ds
\end{array}$$
where $\Gamma(\cdot)$ is the Gamma function. It is clear that 
\begin{equation}\label{FDE:e0}\begin{array}{ll}
{}_0I^{\alpha}_x x^\mu &\ds = \frac{\Gamma(\mu+1)}{\Gamma(\alpha+\mu+1)}x^{\alpha+\mu}, \\[0.1in]
{}_xI^{\alpha}_1 (1-x)^\mu &\ds = \frac{\Gamma(\mu+1)}{\Gamma(\alpha+\mu+1)}(1-x)^{\alpha+\mu}
\end{array}\end{equation}
holds for any $0 < \alpha <1$ and $\mu >-1$. 

For any positive integer $m$ and $0 < \alpha < 1$, the left and right Caputo fractional derivatives of order $m-\alpha$ are defined by \cite{Pod,SamKil}
$${}^C_0D_x^{m-\alpha}w(x) := {}_0I_x^{\alpha} D^m w(x), \quad {}^C_xD_1^{m-\alpha}w(x) := {}_xI_1^{\alpha} (-D)^m w(x), 
$$
and the left and right Riemann-Liouville fractional derivatives of order $m-\alpha$ are 
$${}^R_0D_x^{m-\alpha}w(x) := D^m {}_0I_x^{\alpha} w(x), \quad {}^R_xD^{m-\alpha}_1w(x) := (-D)^m {}_xI_1^{\alpha} w(x).$$

The one-sided Caputo FDE of order $2-\beta$ is 
\begin{equation}\label{FDE:Caputo}
- {}^C_0D_x^{2-\beta}w(x) = f(x), \quad  x \in (0,1).
\end{equation}
The conservative Caputo FDE of order $2-\beta$ is 
\begin{equation}\label{FDE:Conserv}
 - D \bigl ({}^{C}_0D_x^{1-\beta}w(x) \bigr) = f(x), \quad  x \in (0,1).
\end{equation}
The Riemann-Liouville FDE of order $2-\beta$ is of the form 
\begin{equation}\label{FDE:RL}
 - {}^{R}_0D_x^{2-\beta}w(x) = f(x), \quad  x \in (0,1).
\end{equation}

\begin{remark}
For the homogeneous Dirichlet boundary condition, the conservative Caputo FDE \eqref{FDE:Conserv} and the Riemann--Liouville FDE \eqref{FDE:RL} coincide \cite{ErvRoo,WanYan,WanYanZhu14}. But these two equations differ in the current context.
\end{remark}

We consider the classical Neumann boundary condition
\begin{equation}\label{FDE:Nbc}
 Du \big |_{x=0} = a_0, \quad Du \big |_{x=1} = a_1,
\end{equation}
or the Caputo fractional Neumann boundary condition
\begin{equation}\label{FDE:Cbc}
{}^{C}_0D_x^{1-\beta}u \bigl |_{x=0} = a_0, \quad {}^{C}_xD_1^{1-\beta}u \big |_{x=1} = a_1,
\end{equation}
or the Riemann--Liouville fractional Neumann boundary condition
\begin{equation}\label{FDE:Rbc}
{}^{R}_0D_x^{1-\beta}u |_{x=0} = a_0, \quad {}^{R}_xD_1^{1-\beta}u \bigr|_{x=1} = a_1.
\end{equation}

We next cite some known results that are to be used in this paper \cite{AdaFou,Pod,SamKil,WanYan}.
\begin{lemma}\label{lem:Aux1} The left and right Riemann--Liouville fractional integral operators
are adjoints in the $L^2$ sense, i.e., for all $\alpha  > 0$,
\begin{equation}\label{Aux1:e1}
\big({}_0I^{\alpha}_x w, v \big)_{L^2(0,1)} = \big(w, {}_xI^{\alpha}_1 v \big)_{L^2(0,1)}
\qquad \forall w, ~v \in L^2(0,1).
\end{equation}
The left and right Riemann--Liouville fractional integral operators follow the properties of
a semigroup, i.e., for any $w \in L^p(0,1)$ and $\alpha, \mu > 0$
\begin{equation}\label{Aux1:e2}
{}_0I^{\alpha}_x {}_0I^{\mu}_x w(x) = {}_0I^{\alpha+\mu}_x w(x), ~~
{}_xI^{\alpha}_1 {}_xI^{\mu}_1 w(x) = {}_x I^{\alpha+\mu}_1 w(x), ~~\forall x \in [0,1].
\end{equation}
\end{lemma}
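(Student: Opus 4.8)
The plan is to prove both assertions by direct computation, using Fubini's theorem together with the Beta--Gamma identity. For the adjoint relation \eqref{Aux1:e1} I would write the left-hand inner product as the iterated integral
$$\bigl({}_0I^{\alpha}_x w, v\bigr)_{L^2(0,1)} = \f1{\Gamma(\alpha)} \int_0^1 v(x) \int_0^x \f{w(s)}{(x-s)^{1-\alpha}}\, ds\, dx,$$
whose integrand is supported on the triangle $\{(s,x) : 0 < s < x < 1\}$. After checking that the integrand is absolutely integrable there (see below), Fubini's theorem lets me interchange the order of integration and recast this as $\f1{\Gamma(\alpha)} \int_0^1 w(s) \bigl( \int_s^1 (x-s)^{\alpha-1} v(x)\, dx \bigr)\, ds$; the inner integral is precisely $\Gamma(\alpha)\,{}_xI^{\alpha}_1 v(s)$, which gives \eqref{Aux1:e1}. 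Equivalently, one can observe that the change of variable $x \mapsto 1-x$ conjugates ${}_0I^{\alpha}_x$ into ${}_xI^{\alpha}_1$, so it suffices to verify the statement for one side.

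For the semigroup property \eqref{Aux1:e2} I would substitute the definition of ${}_0I^{\mu}_x$ inside that of ${}_0I^{\alpha}_x$ to obtain
$${}_0I^{\alpha}_x {}_0I^{\mu}_x w(x) = \f1{\Gamma(\alpha)\Gamma(\mu)} \int_0^x (x-t)^{\alpha-1} \int_0^t (t-s)^{\mu-1} w(s)\, ds\, dt,$$
and again invoke Fubini on the simplex $\{(s,t) : 0 < s < t < x\}$ to rewrite the right-hand side as $\f1{\Gamma(\alpha)\Gamma(\mu)} \int_0^x w(s) \bigl( \int_s^x (x-t)^{\alpha-1} (t-s)^{\mu-1}\, dt \bigr)\, ds$. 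The inner integral is evaluated by the affine substitution $t = s + (x-s)\tau$ with $\tau \in (0,1)$, which turns it into $(x-s)^{\alpha+\mu-1} \int_0^1 (1-\tau)^{\alpha-1}\tau^{\mu-1}\, d\tau$; by the Beta--Gamma identity this equals $(x-s)^{\alpha+\mu-1} \Gamma(\alpha)\Gamma(\mu)/\Gamma(\alpha+\mu)$. The constants cancel and we are left with $\f1{\Gamma(\alpha+\mu)} \int_0^x (x-s)^{\alpha+\mu-1} w(s)\, ds = {}_0I^{\alpha+\mu}_x w(x)$, which holds for a.e.\ $x \in [0,1]$ and for every $x$ when $w$ is continuous. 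The identity for the right-sided operators follows from the reflection $x \mapsto 1-x$, exactly as before.

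The computations themselves are routine; the only point that needs genuine care, and hence the main obstacle, is justifying the two interchanges of integration order, i.e.\ verifying that the kernels are absolutely integrable against the given functions near their singularities. Since $\alpha, \mu > 0$, each kernel has an integrable singularity along the diagonal: $\int_0^1 \int_0^x (x-s)^{\alpha-1}\, ds\, dx < \infty$, and writing ${}_0I^{\alpha}_x$ as convolution with the locally integrable kernel $x_+^{\alpha-1}/\Gamma(\alpha)$ one bounds $\iint |w(s)|\,|v(x)|\,(x-s)^{\alpha-1}\, ds\, dx$ by Young's inequality (or Cauchy--Schwarz) for $w, v \in L^2(0,1)$, and similarly $\iint (x-t)^{\alpha-1}(t-s)^{\mu-1}|w(s)|\, ds\, dt < \infty$ for $w \in L^p(0,1) \subset L^1(0,1)$. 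Once this is in place Fubini applies and both displays go through without further difficulty; beyond this bookkeeping I do not anticipate any essential obstacle.
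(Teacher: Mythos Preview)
Your proof is correct and is exactly the standard argument: Fubini on the triangle for the adjoint identity, and Fubini plus the Beta integral $\int_0^1 (1-\tau)^{\alpha-1}\tau^{\mu-1}\,d\tau = \Gamma(\alpha)\Gamma(\mu)/\Gamma(\alpha+\mu)$ for the semigroup property, with the reflection $x\mapsto 1-x$ handling the right-sided operators. Your justification of absolute integrability is adequate.

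As for comparison with the paper: there is nothing to compare. The paper does not prove Lemma~\ref{lem:Aux1}; it is stated as a known result with citations to \cite{AdaFou,Pod,SamKil,WanYan}, and the proofs in those references (e.g.\ Samko--Kilbas--Marichev) proceed exactly as you have outlined. So your proposal simply supplies the textbook argument that the paper elected to cite rather than reproduce.
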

\begin{lemma}\label{lem:Aux2}
Let $\alpha > 0$ and $0 < \mu < 1$. Then for any $w \in L^p(0,1)$
\begin{equation}\label{Aux2:e1}
{}^R{}_0 D^{\alpha}_x ~{}_0 I^{\alpha}_x w(x) = w(x), \quad {}^R_xD_1^{\alpha} ~{}_xI^{\alpha}_1 w(x) = w(x).
\end{equation}
For any $v, w \in C^1[0,1]$ with $v(0)=0$ and $w(1)=0$, the following equalities hold
\begin{equation}\label{Aux2:e2}\begin{array}{ll}
{}^R_0 D^{\mu}_x v(x) = {}^C_0 D^{\mu}_x v(x), ~~ &{}_0I^{\mu}_x~ {}^R_0D^{\mu}_x v(x) = {}_0I^{\mu}_x ~{}^C_0D^{\mu}_x v(x) = v(x), \\[0.1in]
{}^R_xD^{\mu}_1 w(x) = {}^C_x D^{\mu}_1 w(x), &{}_xI^{\mu}_1 ~{}^R_xD^{\mu}_1 w(x) = {}_xI^{\mu}_1 ~{}^C_xD^{\mu}_1 w(x) = w(x).
\end{array}\end{equation}
\end{lemma}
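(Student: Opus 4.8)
The plan is to treat the two displays of the lemma separately: \eqref{Aux2:e1} will come directly from the semigroup law of Lemma~\ref{lem:Aux1} together with the fundamental theorem of calculus, while \eqref{Aux2:e2} will follow from the classical decomposition of a Caputo derivative into a Riemann--Liouville derivative minus a boundary term, which is itself a one-line computation using \eqref{FDE:e0} and the semigroup law.

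For \eqref{Aux2:e1} I would write $m := \lceil\alpha\rceil$, so that $m-\alpha\in[0,1)$ and, by definition, ${}^R_0D^\alpha_x u = D^m\,{}_0I^{m-\alpha}_x u$ (with ${}_0I^{m-\alpha}_x$ read as the identity when $m-\alpha=0$). Since $w\in L^p(0,1)$, the semigroup property \eqref{Aux1:e2} gives ${}_0I^{m-\alpha}_x\,{}_0I^\alpha_x w = {}_0I^m_x w$, and ${}_0I^m_x w$ is the $m$-fold iterated antiderivative of $w$, so that $D^m\,{}_0I^m_x w = w$. Hence ${}^R_0D^\alpha_x\,{}_0I^\alpha_x w = w$. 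The right-sided identity is obtained in the same way, or by the reflection $x\mapsto 1-x$, under which ${}_0I^\alpha_x\leftrightarrow{}_xI^\alpha_1$ and $D\leftrightarrow -D$, so that every left-sided operator is carried to its right-sided counterpart.

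For \eqref{Aux2:e2} I would fix $v\in C^1[0,1]$ with $v(0)=0$ and $0<\mu<1$, and decompose $v(s)=v(0)+{}_0I^1_x v'(s)$. Applying ${}_0I^{1-\mu}_x$, using \eqref{FDE:e0} on the constant $1$ and \eqref{Aux1:e2} on the other term, gives
$$
{}_0I^{1-\mu}_x v(x) = \frac{v(0)}{\Gamma(2-\mu)}\,x^{1-\mu} + {}_0I^{2-\mu}_x v'(x).
$$
Differentiating once, and using $\tfrac{1-\mu}{\Gamma(2-\mu)}=\tfrac{1}{\Gamma(1-\mu)}$ together with $D\,{}_0I^{2-\mu}_x v' = D\,{}_0I^1_x\bigl({}_0I^{1-\mu}_x v'\bigr) = {}_0I^{1-\mu}_x v'$, yields
$$
{}^R_0D^\mu_x v(x) = \frac{v(0)}{\Gamma(1-\mu)}\,x^{-\mu} + {}_0I^{1-\mu}_x Dv(x) = \frac{v(0)}{\Gamma(1-\mu)}\,x^{-\mu} + {}^C_0D^\mu_x v(x).
$$
With $v(0)=0$ the boundary term vanishes, so ${}^R_0D^\mu_x v = {}^C_0D^\mu_x v$; and then ${}_0I^\mu_x\,{}^C_0D^\mu_x v = {}_0I^\mu_x\,{}_0I^{1-\mu}_x Dv = {}_0I^1_x Dv = v - v(0) = v$ by \eqref{Aux1:e2} and the fundamental theorem of calculus. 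The second line of \eqref{Aux2:e2} follows from the first by the same reflection $x\mapsto 1-x$, which turns the hypothesis $w(1)=0$ into the counterpart of $v(0)=0$.

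I do not expect a genuine obstacle: the argument is essentially bookkeeping with the semigroup identity. The only points that need a word of care are the low-regularity facts $D^m\,{}_0I^m_x w = w$ for $w\in L^p(0,1)$ and $D\,{}_0I^1_x g = g$ for $g\in L^1(0,1)$ --- i.e., that ${}_0I^m_x w$ is absolutely continuous of the appropriate order with the expected weak derivatives --- which are standard.
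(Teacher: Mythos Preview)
Your argument is correct and is the standard one found in the references the paper cites. Note, however, that the paper does not actually prove this lemma: it is one of the ``known results'' quoted without proof from \cite{Pod,SamKil,WanYan}, so there is no paper proof to compare against beyond observing that your derivation is precisely the textbook computation those sources contain.
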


\begin{lemma}\label{lem:Aux0} Let $\alpha > 0$. 
\begin{itemize}
\item[{\rm (i)}] ${}_0I^{\alpha}_x$: $L^2(0,1) \rightarrow L^2(0,1)$ is a bounded linear operator.

\item[{\rm (ii)}] ${}_xI^{\alpha}_1$: $L^2(0,1) \rightarrow L^2(0,1)$ is a bounded linear operator.
\end{itemize}
\end{lemma}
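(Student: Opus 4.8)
\emph{Proof plan.} Linearity of both operators is immediate from linearity of the defining integrals, so the whole content is the $L^2$-boundedness, and it suffices to treat ${}_0I^\alpha_x$ and then deduce (ii) by symmetry. The plan is to view ${}_0I^\alpha_x$ as convolution against the weakly singular kernel $k_\alpha(t):=t^{\alpha-1}/\Gamma(\alpha)$: after extending $w$ by zero outside $(0,1)$, one has ${}_0I^\alpha_x w(x)=\int_{\mathbb R}k_\alpha(x-s)w(s)\,ds$ for $x\in(0,1)$, and only the values of $k_\alpha$ on $(0,1)$ ever enter. Since $\alpha>0$, $\int_0^1 t^{\alpha-1}\,dt=1/\alpha<\infty$, so $k_\alpha\mathbf{1}_{(0,1)}\in L^1$, and Young's convolution inequality at once gives $\|{}_0I^\alpha_x w\|_{L^2(0,1)}\le\|k_\alpha\mathbf{1}_{(0,1)}\|_{L^1}\,\|w\|_{L^2(0,1)}=\frac{1}{\Gamma(\alpha+1)}\|w\|_{L^2(0,1)}$, which is the claim.

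If one would rather not quote Young's inequality, I would run a self-contained Schur-type estimate instead: split $(x-s)^{\alpha-1}=(x-s)^{(\alpha-1)/2}(x-s)^{(\alpha-1)/2}$, apply the Cauchy--Schwarz inequality in $s$ to obtain, for a.e.\ $x\in(0,1)$, the pointwise bound $|{}_0I^\alpha_x w(x)|^2\le(\alpha\,\Gamma(\alpha)^2)^{-1}\int_0^x(x-s)^{\alpha-1}|w(s)|^2\,ds$ (using $\int_0^x(x-s)^{\alpha-1}ds=x^\alpha/\alpha\le1/\alpha$), then integrate in $x$ over $(0,1)$ and exchange the order of integration by Tonelli; the inner integral reduces to $\int_s^1(x-s)^{\alpha-1}dx=(1-s)^\alpha/\alpha\le1/\alpha$, and one again lands on $\|{}_0I^\alpha_x w\|_{L^2(0,1)}\le\frac{1}{\Gamma(\alpha+1)}\|w\|_{L^2(0,1)}$. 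A byproduct worth recording is that this argument simultaneously shows that the defining integral converges for a.e.\ $x$ whenever $w\in L^2(0,1)$, so ${}_0I^\alpha_x$ is genuinely defined on the whole of $L^2(0,1)$.

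For part (ii) I would not redo the computation: either note that the reflection $x\mapsto1-x$ is an isometric involution of $L^2(0,1)$ conjugating ${}_0I^\alpha_x$ to ${}_xI^\alpha_1$, so (ii) follows from (i) with the same operator norm; or invoke Lemma~\ref{lem:Aux1}, which exhibits ${}_xI^\alpha_1$ as the $L^2(0,1)$-adjoint of ${}_0I^\alpha_x$, and the adjoint of a bounded operator is bounded with the same norm. I do not expect a genuine obstacle here. The one point that must be used honestly is that the domain is a bounded interval: that is precisely what makes $\int_0^1 t^{\alpha-1}\,dt$ finite for every $\alpha>0$, and the analogous operator on all of $\mathbb R$ is not $L^2$-bounded, so the finiteness of $(0,1)$ cannot be dispensed with.
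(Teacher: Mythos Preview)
Your argument is correct. The paper does not actually supply a proof of this lemma: it is listed among the ``known results that are to be used in this paper'' with citations to \cite{AdaFou,Pod,SamKil,WanYan}, so there is no in-paper proof to compare against. Your Young's-inequality route (and the equivalent Schur/Cauchy--Schwarz variant you sketch) is the standard way to prove this, and your explicit constant $\|{}_0I^\alpha_x\|_{L^2\to L^2}\le 1/\Gamma(\alpha+1)$ is sharp for this method. Deducing (ii) from (i) either by the reflection $x\mapsto 1-x$ or by the adjointness in Lemma~\ref{lem:Aux1} is exactly right, and your closing remark that boundedness of the interval is essential is a useful caveat.
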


\begin{lemma}\label{lem:Fred} {\rm (Friedrichs inequality)} 
\begin{equation*}
\| v \|_{L^2(0,1)} \le C \Bigl ( \Bigl | \int^1_0 vdx \Bigr | + \|Dv \|_{L^2(0,1)} \Bigr ), \qquad  \forall v \in H^1(0,1).
\end{equation*}
Consequently, $\| v \|_{H^{1,0}(0,1)} :=  \|Dv \|_{L^2(0,1)}$ defines a norm on the space $H^{1,0}(0,1)$
\[ H^{1,0}(0,1) := \Bigl \{ v \in H^1(0,1): ~ \int_0^1 v dx = 0 \Bigr \}. \]
\end{lemma}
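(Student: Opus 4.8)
The plan is to prove the inequality first for smooth functions via a direct integral representation, then pass to the limit by density, and finally read off the norm statement as a corollary.

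First I would observe that $C^\infty[0,1]$ is dense in $H^1(0,1)$ and that the functional $v \mapsto \bigl|\int_0^1 v\,dx\bigr| + \|Dv\|_{L^2(0,1)}$ as well as $v \mapsto \|v\|_{L^2(0,1)}$ are continuous with respect to the $H^1(0,1)$ norm, so it suffices to establish the estimate for $v \in C^1[0,1]$. For such $v$ and any $x,y \in [0,1]$ we have $v(x) - v(y) = \int_y^x Dv(t)\,dt$; integrating this identity in $y$ over $(0,1)$ gives
\[
v(x) - \int_0^1 v(y)\,dy = \int_0^1 \Bigl( \int_y^x Dv(t)\,dt \Bigr)\,dy .
\]
By the Cauchy--Schwarz inequality $\bigl|\int_y^x Dv(t)\,dt\bigr| \le |x-y|^{1/2}\,\|Dv\|_{L^2(0,1)} \le \|Dv\|_{L^2(0,1)}$ for all $x,y \in [0,1]$, so $\bigl|v(x) - \int_0^1 v\,dy\bigr| \le \|Dv\|_{L^2(0,1)}$ pointwise; squaring and integrating over $x \in (0,1)$ yields $\bigl\| v - \int_0^1 v\,dx \bigr\|_{L^2(0,1)} \le \|Dv\|_{L^2(0,1)}$.

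I would then finish with the triangle inequality and the fact that the $L^2(0,1)$ norm of a constant equals its absolute value:
\[
\|v\|_{L^2(0,1)} \le \Bigl\| v - \int_0^1 v\,dx \Bigr\|_{L^2(0,1)} + \Bigl|\int_0^1 v\,dx\Bigr| \le \|Dv\|_{L^2(0,1)} + \Bigl|\int_0^1 v\,dx\Bigr|,
\]
which is the claim (in fact with $C = 1$), and then extend it to all $v \in H^1(0,1)$ by density. An alternative would be a compactness/contradiction argument: a normalized sequence $v_n$ violating the inequality is bounded in $H^1(0,1)$, hence converges in $L^2(0,1)$ along a subsequence by the Rellich--Kondrachov embedding $H^1(0,1) \hookrightarrow\hookrightarrow L^2(0,1)$ to a limit with vanishing weak derivative and vanishing mean, i.e.\ to $0$, contradicting $\|v_n\|_{L^2(0,1)} = 1$; I prefer the direct route since it is self-contained and explicit.

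For the concluding assertion that $\|v\|_{H^{1,0}(0,1)} := \|Dv\|_{L^2(0,1)}$ is a norm on $H^{1,0}(0,1)$: finiteness is clear since $v \in H^1(0,1)$, and absolute homogeneity and the triangle inequality follow from the corresponding properties of $\|\cdot\|_{L^2(0,1)}$ together with linearity of $D$. Positive definiteness is precisely where the Friedrichs inequality is used: if $v \in H^{1,0}(0,1)$ then $\int_0^1 v\,dx = 0$, so the inequality reduces to $\|v\|_{L^2(0,1)} \le C\,\|Dv\|_{L^2(0,1)}$, whence $\|Dv\|_{L^2(0,1)} = 0$ forces $v = 0$ (equivalently, $Dv = 0$ on the connected interval $(0,1)$ forces $v$ constant, and the zero-mean constraint forces that constant to be $0$). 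There is no substantial obstacle in this proof; the only point deserving a little care is the reduction to smooth functions, which is justified by the density of $C^\infty[0,1]$ in $H^1(0,1)$ and the continuity of the relevant functionals noted above.
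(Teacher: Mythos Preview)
Your proof is correct and complete; in fact you obtain the sharp constant $C=1$. Note, however, that the paper does not supply its own proof of this lemma: it is listed among the ``known results'' cited from standard references (Adams--Fournier, Podlubny, Samko--Kilbas--Marichev, Wang--Yang) and stated without proof, so there is no argument in the paper to compare yours against.
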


In this paper we also use $C$ with or without subscripts to denote generic positive constants that may assume different values at different occurences.

\section{Nonconventional fractional derivative spaces}
\setcounter{section}{3}\setcounter{equation}{0}

We introduce some nonconventional fractional derivative spaces and study their properties. For $0 < \beta <1$ and $\varepsilon = \varepsilon(\beta)$ with $0 < \varepsilon(\beta) <<1$ and $\varepsilon(\beta) < 1 - \beta$, we define $\kappa = \kappa(\beta)$ to be
\begin{equation*}
\kappa(\beta): =\left\{ \begin{array}{ll}
2, \ \   & 0 < \beta <1/2;   \\[0.05in]
1+(1 - \beta - \varepsilon(\beta))/\beta, & 1/2 \leq \beta <1.
\end{array}\right.
\end{equation*}
In particular, $1 < \kappa(\beta) < 1/\beta$ but sufficiently close to $1/\beta$ for $1/2 \le \beta < 1$.

For $0 < \mu < 1$ we define left and right Riemann--Liouville fractional derivative spaces
\begin{equation}\label{FDS:e1}\begin{array}{l}
H^\mu_{R,l}(0,1) := \bigl \{ v \in L^\kappa(0,1): ~~ {}^R_0D_x^\mu v \in L^2(0,1) \bigr \}, \\[0.1in]
H^\mu_{R,r}(0,1) := \bigl \{ v \in L^\kappa(0,1): ~~ {}^R_xD_1^\mu v \in L^2(0,1) \bigr \}
\end{array}\end{equation}
equipped with the (semi) norms
\begin{equation*}\begin{array}{lrl}
|v|_{H^\mu_{R,l}(0,1)} := \| {}^R_0D_x^\mu v\|^2_{L^2(0,1)}, & \|v\|_{H^\mu_{R,l}(0,1)} := \big (\|v\|^2_{L^\kappa(0,1)} + |v|_{H^\mu_{R,l}(0,1)}^2 \big)^\frac{1}{2}, \\[0.1in]
|v|_{H^\mu_{R,r}(0,1)} := \| {}^R_xD_1^\mu v\|^2_{L^2(0,1)}, & \|v\|_{H^\mu_{R,r}(0,1)} := \big (\|v\|^2_{L^\kappa(0,1)} + |v|_{H^\mu_{R,r}(0,1)}^2 \big)^\frac{1}{2}.
\end{array}\end{equation*}
We also define subspaces $H^{\mu,0}_{R,l}(0,1) \subset H^\mu_{R,l}(0,1)$ and $H^{\mu,0}_{R,r}(0,1) \subset H^{\mu}_{R,r}(0,1)$
\begin{equation}\label{FDS:e2}\begin{array}{rl}
H^{\mu,0}_{R,l}(0,1) &\ds := \left \{ v \in H^{\mu}_{R,l}(0,1): ~~ \int^1_0 {}_0I_x^{1-\mu}v(x) dx=0 \right \}, \\[0.1in]
H^{\mu,0}_{R,r}(0,1) &\ds := \left \{ v \in H^{\mu}_{R,r}(0,1): ~~ \int^1_0 {}_xI_1^{1-\mu}v(x) dx=0 \right \}.
\end{array}\end{equation}
Corollary \ref{cor:RFred} shows that
\begin{equation*}
\|v\|_{H^{\mu,0}_{R,l}} := |v|_{H^{\mu}_{R,l}(0,1)}, \quad \|v\|_{H^{\mu,0}_{R,r}} := |v|_{H^{\mu}_{R,r}(0,1)}
\end{equation*}
define a norm on the spaces $H^{\mu,0}_{R,l}(0,1)$ and $H^{\mu,0}_{R,r}(0,1)$, respectively.

We similarly define left and right Caputo fractional derivative spaces
\begin{equation}\label{FDS:e3}\begin{array}{rl}
H^{\mu}_{C,l}(0,1) &:= \bigl \{ v \in L^\kappa(0,1): ~~ {}^C_0 D_x^{\mu}v \in L^2(0,1) \bigr \}, \\[0.1in]
H^{\mu}_{C,r}(0,1) &:= \bigl \{ v \in L^\kappa(0,1): ~~ {}^C_x D_1^{\mu}v \in L^2(0,1) \bigr \}
\end{array}\end{equation}
equipped with the (semi) norms
\begin{equation*}\begin{array}{rlrl}
|v|_{H^{\mu}_{C,l}(0,1)} &:= \| {}^C_0 D_x^{\mu} v \|^2_{L^2(0,1)}, & \|v\|_{H^{\mu}_{C,l}(0,1)} := \big (\|v\|^2_{L^\kappa(0,1)} + |v|_{H^{\mu}_{C,l}(0,1)}^2 \big)^\frac{1}{2}, \\[0.1in]
|v|_{H^{\mu}_{C,r}(0,1)} &:= \| {}^C_xD_1^{\mu} v\|^2_{L^2(0,1)}, & \|v\|_{H^{\mu}_{C,r}(0,1)} := \big (\|v\|^2_{L^\kappa(0,1)} + |v|_{H^{\mu}_{C,r}(0,1)}^2 \big)^\frac{1}{2}.
\end{array}\end{equation*}
We also define subspaces $H^{\mu,0}_{C,l}(0,1) \subset H^{\mu}_{C,l}(0,1)$ and $H^{\mu,0}_{C,r}(0,1) \subset H^{\mu}_{C,r}(0,1)$
\begin{equation}\label{FDS:e4}\begin{array}{rl}
H^{\mu,0}_{C,l}(0,1) &\ds := \left \{ v \in H^{\mu}_{C,l}(0,1): ~~ \int^1_0 v dx=0 \right \}, \\[0.1in]
H^{\mu,0}_{C,r}(0,1) &\ds := \left \{ v \in H^{\mu}_{C,r}(0,1): ~~ \int^1_0 v dx=0 \right \}.
\end{array}\end{equation}
Lemma \ref{lem:CFred} shows that
\begin{equation*}
\|v\|_{H^{\mu,0}_{C,l}} := |v|_{H^{\mu}_{C,l}(0,1)}, \quad \|v\|_{H^{\mu,0}_{C,r}} := |v|_{H^{\mu}_{C,r}(0,1)}
\end{equation*}
define a norm on the spaces $H^{\mu,0}_{C,l}(0,1)$ and $H^{\mu,0}_{C,r}(0,1)$, respectively .

For $0 < \beta < 1/2$, $\kappa(\beta) = 2$ and so $L^\kappa(0,1) = L^2(0,1)$. The left and right Riemann--Liouville fractional derivative spaces \eqref{FDS:e1} coincide with those introduced in \cite{ErvRoo}. For the homogeneous Dirichlet boundary condition, the Riemann--Liouville fractional derivative spaces \eqref{FDS:e1} and the Caputo fractional derivative spaces \eqref{FDS:e3} equal to the fractional Sobolev space $H^{1-\beta}_0(0,1)$ with equivalent norms.

However, without the homogeneous Dirichlet boundary condition, the Riemann-Liouville and Caputo fractional derivative spaces differ from each other. For example, choosing $\alpha = \beta$ and $\mu = -\beta$ in \eqref{FDE:e0} yields
\begin{equation}\label{FDS:e5}
{}_0I^{\beta}_x x^{-\beta} = \Gamma(1-\beta), \qquad {}^R_0 D^{2-\beta}_x x^{-\beta} = {}^R_0 D^{1-\beta}_x x^{-\beta} = 0.
\end{equation}
Hence, $x^{-\beta}$ is a solution to the homogeneous Riemann--Liouville FDE \eqref{FDE:RL} and ${}^R_0 D^{1-\beta}_x x^{-\beta} \in L^2(0,1)$ for any $0 < \beta < 1$. Since $x^{-\beta} \in L^2(0,1)$ for $0 < \beta < 1/2$, $x^{-\beta} \in H^{1-\beta}_{R,l}(0,1)$ in this case. However,
\begin{equation}\label{FDS:e6}
{}^C_0 D^{1-\beta}_x x^{-\beta} = - \beta {}_0I^{\beta}_x x^{-\beta-1} = -\infty, \qquad 0 < x < 1.
\end{equation}
Hence, $x^{-\beta} \notin H^{1-\beta}_{C,l}(0,1)$ for any $0 < \beta < 1$.

For $1/2 \le \beta < 1$, \eqref{FDS:e5} shows that $x^{-\beta}$ is still a solution to the homogeneous Riemann--Liouville FDE \eqref{FDE:RL} and ${}^R_0 D^{1-\beta}_x x^{-\beta} \in L^2(0,1)$, but $x^{-\beta} \notin L^2(0,1)$. If we insist that $H^{1-\beta}_{R,l}(0,1) \subset L^2(0,1)$ as those in \cite{ErvRoo}, a weak solution might not exist in these spaces. Note that $x^{-\beta} \in L^\kappa(0,1)$ and ${}^R_0 D^{1-\beta}_x x^{-\beta} \in L^2(0,1)$, so $x^{-\beta} \in H^{1-\beta}_{R,l}(0,1)$ in \eqref{FDS:e1}. This observation motivates the introduction of the fractional derivative spaces $H^{1-\beta}_{R,l}(0,1)$ and $H^{1-\beta}_{R,r}(0,1)$ in \eqref{FDS:e1} which could accommodate solutions with singularity $x^{-\beta}$ or $(1-x)^{-\beta}$, respectively. But for $1/2 \le \beta < 1$ these spaces are nonconventional and not Hilbert spaces anymore.

\begin{theorem}\label{thm:iso} Let $0 < \beta < 1$. The fractional integral operator $I^{\beta}_+$ (or $I^{\beta}_-$) defines an isomorphism from $H^{1-\beta}_{R,l}(0,1)$ (or $H^{1-\beta}_{R,r}(0,1)$) onto $H^1(0,1)$ with  equivalent norms, i.e., there exist positive constants $0 < C_0 \leq C_1 < +\infty$ such that
$$\begin{array}{c}
C_0 \big \| I_+^{\beta} v \big \|_{H^1(0,1)} \leq \|v\|_{H^{1-\beta}_{R,l}(0,1)}
\leq C_1 \big \| I_+^{\beta}v \big \|_{H^1(0,1)}, \quad \forall\ v \in H^{1-\beta}_{R,l}(0,1), \\[0.1in]
C_0 \big \| I_-^{\beta} v \big \|_{H^1(0,1)} \leq \|v\|_{H^{1-\beta}_{R,r}(0,1)}
\leq C_1 \big \| I_-^{\beta} v \big \|_{H^1(0,1)}, \quad \forall\ v \in H^{1-\beta}_{R,r}(0,1).
\end{array}$$
\end{theorem}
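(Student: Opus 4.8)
The plan rests on the single identity ${}^R_0D^{1-\beta}_x = D\,{}_0I^\beta_x$, which is nothing but the definition of the left Riemann--Liouville derivative of order $1-\beta$ (take $m=1$, $\alpha=\beta$). Writing $u:=I^\beta_+ v={}_0I^\beta_x v$ for $v\in H^{1-\beta}_{R,l}(0,1)$, this gives $Du={}^R_0D^{1-\beta}_x v$, so the $H^1$-seminorm of $u$ equals the fractional seminorm of $v$ \emph{exactly}. The theorem therefore reduces to (i) showing $I^\beta_+$ is a linear bijection of $H^{1-\beta}_{R,l}(0,1)$ onto $H^1(0,1)$, and (ii) comparing $\|u\|_{L^2(0,1)}$ with $\|v\|_{L^\kappa(0,1)}$ in both directions.

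For the forward bound, $v\in L^\kappa(0,1)\subset L^1(0,1)$ makes $u={}_0I^\beta_x v$ well defined and $Du={}^R_0D^{1-\beta}_x v\in L^2(0,1)$ by hypothesis; it remains only to place $u$ in $L^2(0,1)$. For $0<\beta<1/2$ we have $\kappa=2$ and this is Lemma \ref{lem:Aux0}. For $1/2\le\beta<1$ I invoke the classical fractional-integration (Hardy--Littlewood--Sobolev) theorem: ${}_0I^\beta_x$ maps $L^\kappa(0,1)$ boundedly into $L^q(0,1)$ with $1/q=1/\kappa-\beta$ (here $1/\kappa>\beta$ since $\kappa<1/\beta$), and since $\kappa=(1-\varepsilon)/\beta$ a one-line computation reduces $q\ge 2$ to $\beta\,\varepsilon/(1-\varepsilon)\le 1/2$, which holds because $\varepsilon=\varepsilon(\beta)$ is taken sufficiently small; as $L^q(0,1)\subset L^2(0,1)$ this gives $u\in L^2(0,1)$ with $\|u\|_{L^2(0,1)}\le C\|v\|_{L^\kappa(0,1)}$. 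Hence $u\in H^1(0,1)$ and $\|I^\beta_+ v\|_{H^1(0,1)}^2\le C\|v\|_{L^\kappa(0,1)}^2+\|{}^R_0D^{1-\beta}_x v\|_{L^2(0,1)}^2\le C\|v\|_{H^{1-\beta}_{R,l}(0,1)}^2$, which is the left inequality with $C_0=C^{-1/2}$.

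For injectivity, $I^\beta_+ v=0$ forces $v={}^R_0D^\beta_x\,{}_0I^\beta_x v=0$ by \eqref{Aux2:e1}. For surjectivity, given $u\in H^1(0,1)$ set $v:={}^R_0D^\beta_x u=D\,{}_0I^{1-\beta}_x u$. The elementary rule $({}_0I^\alpha_x w)'(x)={}_0I^\alpha_x(w')(x)+\frac{x^{\alpha-1}}{\Gamma(\alpha)}\,w(0)$ — valid for absolutely continuous $w$ with $w'\in L^1(0,1)$, obtained by the substitution $t=x-s$ and differentiation under the integral sign, and which in particular shows ${}_0I^\alpha_x w$ is itself absolutely continuous — applied with $\alpha=1-\beta$ and $w=u$ yields the Riemann--Liouville/Caputo splitting $v={}_0I^{1-\beta}_x(Du)+\frac{u(0)}{\Gamma(1-\beta)}\,x^{-\beta}$; the first term lies in $L^2(0,1)\subset L^\kappa(0,1)$ by Lemma \ref{lem:Aux0}, and $x^{-\beta}\in L^\kappa(0,1)$ exactly because $\kappa\beta<1$, so $v\in L^\kappa(0,1)$. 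Applying the same rule with $\alpha=\beta$ and $w={}_0I^{1-\beta}_x u$ (which vanishes at $x=0$ since $|{}_0I^{1-\beta}_x u(x)|\le C\|u\|_{C[0,1]}x^{1-\beta}$), together with the semigroup law \eqref{Aux1:e2}, gives $I^\beta_+ v={}_0I^\beta_x v=\bigl({}_0I^\beta_x\,{}_0I^{1-\beta}_x u\bigr)'=\bigl({}_0I^1_x u\bigr)'=u$; and since $Du={}^R_0D^{1-\beta}_x v\in L^2(0,1)$, indeed $v\in H^{1-\beta}_{R,l}(0,1)$. Finally, for arbitrary $v\in H^{1-\beta}_{R,l}(0,1)$ put $u:=I^\beta_+ v\in H^1(0,1)$; then $v={}^R_0D^\beta_x u$ by \eqref{Aux2:e1}, the splitting gives $\|v\|_{L^\kappa(0,1)}\le C\|Du\|_{L^2(0,1)}+C\,|u(0)|\le C\|u\|_{H^1(0,1)}$ via $H^1(0,1)\hookrightarrow C[0,1]$, and combined with $\|{}^R_0D^{1-\beta}_x v\|_{L^2(0,1)}=\|Du\|_{L^2(0,1)}$ this is the right inequality. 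The statement for $I^\beta_-$ on $H^{1-\beta}_{R,r}(0,1)$ follows verbatim with left and right interchanged.

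The genuinely delicate point — and the reason these nonstandard spaces are introduced — lies in the regime $1/2\le\beta<1$: the Caputo part ${}_0I^{1-\beta}_x(Du)$ is harmless in $L^2$, but the boundary term $\frac{u(0)}{\Gamma(1-\beta)}x^{-\beta}$ is \emph{not} in $L^2(0,1)$, only in $L^\kappa(0,1)$, which is precisely why the base integrability exponent must satisfy $\kappa<1/\beta$ and why the Hardy--Littlewood--Sobolev exponent check (hence the specific form of $\kappa(\beta)$ and the smallness of $\varepsilon(\beta)$) is the one step that needs care; everything else is the adjoint/semigroup calculus of Lemmas \ref{lem:Aux1}--\ref{lem:Aux2}.
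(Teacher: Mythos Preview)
Your proof is correct, but the route differs from the paper's in two places.

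For the left inequality, you bound $\|{}_0I^\beta_x v\|_{L^2}$ directly by $\|v\|_{L^\kappa}$ via Hardy--Littlewood--Sobolev (checking the exponent arithmetic $1/q=1/\kappa-\beta$ lands in $q\ge 2$ for small $\varepsilon$), whereas the paper avoids HLS entirely and instead runs a duality argument: it writes an arbitrary $\phi\in C^\infty_0(0,1)$ as a derivative plus its mean, integrates by parts to throw the derivative onto ${}_0I^\beta_x v$, and uses the adjoint relation \eqref{Aux1:e1} to bound $\bigl|({}_0I^\beta_x v,\phi)\bigr|$ by the full $H^{1-\beta}_{R,l}$-norm times $\|\phi\|_{L^2}$. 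For the right inequality, you use the explicit Riemann--Liouville/Caputo splitting $v={}_0I^{1-\beta}_x(Du)+\tfrac{u(0)}{\Gamma(1-\beta)}x^{-\beta}$ and the embedding $H^1\hookrightarrow C[0,1]$, while the paper again argues by duality, bounding $|(v,\phi)|$ by $\bigl(\bigl|\int_0^1{}_0I^\beta_x v\,dx\bigr|+\|D\,{}_0I^\beta_x v\|_{L^2}\bigr)\|\phi\|_{L^{\kappa'}}$. The surjectivity steps are essentially the same in both.

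What each buys: your argument is shorter and more transparent, trading on standard external tools (HLS and the classical RL/Caputo relation). The paper's argument is fully self-contained relative to Lemmas~\ref{lem:Aux1}--\ref{lem:Aux0}, and its duality estimate for the right inequality \emph{is} the fractional Friedrichs inequality of Corollary~\ref{cor:RFred}; the paper extracts that corollary directly from the proof, whereas your route does not yield it without a separate argument.
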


\begin{proof} By symmetry, we need only to prove the results for $I^{\beta}_+$. The proof is divided into three parts. First we prove that $I^{\beta}_+$ maps $H^{1-\beta}_{R,l}(0,1)$ continuously into $H^1(0,1)$, i.e., the left half of the first estimate in the theorem holds. Since for any $v \in H^{1-\beta}_{R,l}(0,1)$, 
$$\big \|D {}_0I_x^{\beta} v \big \|_{L^2(0,1)} = | v |_{H^{1-\beta}_{R,l}(0,1)} \leq \|v\|_{H^{1-\beta}_{R,l}(0,1)},$$
we need only to prove the estimate
\begin{equation}\label{iso:e2}
\big \| {}_0I_x^{\beta} v \big \|_{L^2(0,1)} \leq C \| v \|_{H^{1-\beta}_{R,l}(0,1)}, \quad \forall v \in H^{1-\beta}_{R,l}(0,1).
\end{equation}
We use a duality argument for the proof. For any $\phi \in C^\infty_0(0,1)$, we have
$$\begin{array}{l}
\Big |  \big( {}_0I^{\beta}_x v, \phi \big)_{L^2(0,1)} \Big | \\
\ds \quad = \left | \Big ( {}_0I^{\beta}_x v, D\Big (\int^x_0 \phi(s) ds - x \int^1_0\phi ds \Big) \Big)_{L^2(0,1)} \right. \\[0.05in]
\ds \qquad \quad \left. + \Bigl ( {}_0I^{\beta}_x v,\int^1_0\phi(s) ds\Big)_{L^2(0,1)} \right | \\[0.1in]
\ds \quad = \left | - \Big ( D {}_0I^{\beta}_x v, \int^x_0 \phi(s) ds - x \int^1_0 \phi(s) ds \Big)_{L^2(0,1)} \right. \\
\ds \left. \qquad \quad   + \int^1_0\phi(s) ds \Bigl (v, {}_xI^{\beta}_1 1 \Big )_{L^2(0,1)} \right | \\
\ds \quad \le \big \| {}^R_0D^{1-\beta}_x v \big \|_{L^2(0,1)} \left ( \Big \| \int^x_0\phi(s) ds\|_{L^2(0,1)}
     + \Big |\int^1_0 \phi(x) dx \Big| \right) \\
\ds \qquad + \f1{\Gamma(1+\beta)} \Big |\int^1_0\phi(x) dx \Big | ~\|v\|_{L^\kappa(0,1)} ~\big \|(1-x)^\beta \big \|_{L^{\kappa'}(0,1)}\\[0.15in]
\quad \leq C\|v\|_{H^{1-\beta}_{R,l}(0,1)}\|\phi\|_{L^2(0,1)}.
\end{array}$$
Here we have used \eqref{FDE:e0} with $\alpha = \beta$ and $\mu = 0$. We then divide the inequality by $\| \phi\|_{L^2(0,1)}$ and take the supremum over $\phi \in C^\infty_0(0,1)$ and use the fact that $C^\infty_0(0,1)$ is dense in $L^2(0,1)$ to arrive at \eqref{iso:e2}. We thus have proved the left half of the first estimate in the theorem.

Next we prove the right half of the first estimate in the theorem by proving the estimate
\begin{equation}\label{iso:e3}
\| v \|_{L^\kappa(0,1)} \leq C \big \| {}_0I_x^{\beta} v \big \|_{H^1(0,1)}, \quad \forall\ v \in H^{1-\beta}_{R,l}(0,1).
\end{equation}
For any $\phi \in C^\infty_0(0,1)$, we use \eqref{Aux2:e2} to obtain
\begin{equation}\label{iso:e4}\begin{array}{l}
\big |(v,\phi)_{L^2} \big | \\
\ds \quad = \Big | \big (v, {}_xI^{\beta}_1 ~{}^C_xD^\beta_1 \phi \big)_{L^2} \Big |
= \Big | \big ( {}_0I^{\beta}_x v, {}^{R}_xD^\beta_1 \phi \big )_{L^2} \Big | \\
\ds \quad = \Big | - \Big ( {}_0I^{\beta}_x v, D \big ( {}_xI^{1-\beta}_1 \phi(x) - (1-x) {}_0I^{1-\beta}_1 \phi \big) 
- {}_0I^{1-\beta}_1 \phi \Big)_{L^2} \Big |\\
\ds \quad = \Big | \Big ( D {}_0I^{\beta}_x v, {}_xI^{1-\beta}_1 \phi - (1-x) {}_0I^{1-\beta}_1 \phi \Big )_{L^2} \\
\ds \qquad \qquad - \big ( {}_0I^{\beta}_x v, {}_0I^{1-\beta}_1 \phi \big )_{L^2} \Big | \\
\ds \quad \le C \Big( \Big| \int^1_0 {}_0I^{\beta}_x v(x) dx \Big| 
+ \big \| D {}_0I^{\beta}_x v \big \|_{L^2(0,1)}\Big) \big ( \big \| {}_xI^{1-\beta}_1 \phi \big \|_{L^2(0,1)} \\
\ds \qquad \qquad + \big | {}_0I^{1-\beta}_1 \phi \big| \big) \\
\ds \quad \le C \Big( \Big| \int^1_0 {}_0I^{\beta}_x v(x) dx\Big| + 
\big\| D {}_0I^{\beta}_x v \big \|_{L^2(0,1)}\Big) \| \phi \|_{L^{\kappa'}(0,1)}.
\end{array}\end{equation}
Here we have the fact that since $\beta\kappa <1$
\begin{equation}\label{iso:e5}\begin{array}{l}
\big \| {}_xI^{1-\beta}_1 \phi \big \|_{L^2(0,1)} \\[0.1in]
\ds \quad \le \big \| {}_x I^{1-\beta}_1 \phi \big \|_{L^\infty(0,1)} \\[0.05in]
\ds \quad \leq \f1{\Gamma(1-\beta)} \sup_{x \in (0,1)} \int^1_x (s-x)^{-\beta} \big | \phi(s) \big | ds \\[0.1in]
\ds \quad \leq \f1{\Gamma(1-\beta)} \sup_{x \in (0,1)} \left ( \int^1_x (s-x)^{-\beta\kappa}ds \right )^\frac{1}{\kappa} \|\phi\|_{L^{\kappa'}(0,1)} \\[0.1in]
\ds \quad \le C \|\phi\|_{L^{\kappa'}(0,1)}.
\end{array} \end{equation}
We divide \eqref{iso:e4} by $ \|\phi\|_{L^{\kappa'}(0,1)}$ and take supremum for all $\phi \in C_0^\infty(0,1)$ to get \eqref{iso:e3}. We thus have proved the right half of the first estimate in the theorem. \eqref{iso:e3} shows that ${}_0I^{\beta}_x$ is an injection from $H^{1-\beta}_{R,l}(0,1)$ into $H^1(0,1)$.

Finally we prove that ${}_0I^{\beta}_x$ maps $H^{1-\beta}_{R,l}(0,1)$ onto $H^1(0,1)$. For $w \in H^1(0,1)$, let
\begin{equation}\label{iso:e7}
v :={}^C_0 D_x^{\beta} \big ( w(x) - w(0) \big ) = {}_0I_x^{1-\beta} D \big (w(x) - w(0) \big ) = {}_0I_x^{1-\beta} Dw.
\end{equation}
Since $Dw \in L^2(0,1)$, Lemma \ref{lem:Aux0} shows that $v \in L^2(0,1) \hookrightarrow  L^\kappa(0,1)$. We apply \eqref{Aux2:e2} to \eqref{iso:e7} to get
\begin{equation}\label{iso:e8}\begin{aligned}
{}_0I_x^{\beta} v(x) & = {}_0I_x^{\beta} {}_0I_x^{1-\beta}D \big (w(x) - w(0) \big) = {}_0I_x D \big (w(x) - w(0) \big) \\
& = w - w(0) = w - {}_0I_x^{\beta} \Bigl ( \frac{w(0)}{\Gamma(1-\beta)} x^{-\beta} \Bigr). \end{aligned} 
\end{equation}
That is, ${}_0I_x^{\beta} v \in H^1(0,1)$. Together with \eqref{iso:e3} we find $v \in H^{1-\beta}_{R,l}(0,1)$. Since $x^{-\beta} \in H^{1-\beta}_{R,l}(0,1)$, we obtain from \eqref{iso:e8} that
\begin{equation}\label{iso:e9}
v^* := v+\frac{w(0)}{\Gamma(1-\beta)} x^{-\beta} \in H^{1-\beta}_{R,l}(0,1)
\end{equation}
satisfies that ${}_0I^{\beta}_x v^* = w$. That is,  ${}_0I_x^{\beta}$ maps $H^{1-\beta}_{R,l}(0,1)$ onto $H^1(0,1)$. 
\end{proof}

\begin{corollary}\label{cor:Space_Char} For $\ 0 < \beta < 1$ the left and right Riemann--Liouville fractional derivative spaces $H^{1-\beta}_{R,l}(0,1)$ and $H^{1-\beta}_{R,r}(0,1)$ are characterized by
\[\begin{array}{l}
\ds H^{1-\beta}_{R,l}(0,1) = \left \{ {}^{R}_0 D_x^{\beta} w(x) - \frac{w(0)}{\Gamma(1-\beta)} x^{-\beta}: \ \ w \in H^1(0,1)
\right \},\\[0.15in]
\ds H^{1-\beta}_{R,r}(0,1) = \left \{{}^{R}_x D_1^{\beta} w(x) - \frac{w(1)}{\Gamma(1-\beta)} (1-x)^{-\beta}: \ \ w \in H^1(0,1) \right \}.
\end{array}\]
\end{corollary}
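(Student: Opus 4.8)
The plan is to deduce this directly from Theorem~\ref{thm:iso}; by the reflection symmetry used there it suffices to handle the left space $H^{1-\beta}_{R,l}(0,1)$. The key point is that the isomorphism $I_+^\beta={}_0I_x^\beta\colon H^{1-\beta}_{R,l}(0,1)\to H^1(0,1)$ given by Theorem~\ref{thm:iso} is inverted, up to one explicit singular term, by the Riemann--Liouville derivative ${}^R_0D_x^\beta=D\,{}_0I_x^{1-\beta}$. On one hand, the semigroup law \eqref{Aux1:e2} and Lemma~\ref{lem:Aux2} give
\[
{}^R_0D_x^\beta\,{}_0I_x^\beta v=D\,{}_0I_x^{1-\beta}{}_0I_x^\beta v=D\,{}_0I_x v=v,\qquad v\in L^\kappa(0,1).
\]
On the other hand, for any $w\in H^1(0,1)$ --- for which $w(0)$ is well defined since $H^1(0,1)\hookrightarrow C[0,1]$, and $Dw\in L^2(0,1)\subset L^1(0,1)$ --- an integration by parts gives the conversion identity
\[
{}^R_0D_x^\beta w={}^C_0D_x^\beta w+\frac{w(0)}{\Gamma(1-\beta)}\,x^{-\beta}={}_0I_x^{1-\beta}Dw+\frac{w(0)}{\Gamma(1-\beta)}\,x^{-\beta},
\]
and feeding this into ${}_0I_x^\beta$, using \eqref{Aux2:e2} and ${}_0I_x^\beta x^{-\beta}=\Gamma(1-\beta)$ from \eqref{FDS:e5}, yields ${}_0I_x^\beta\,{}^R_0D_x^\beta w=w$; this conversion identity is, in fact, essentially what \eqref{iso:e7}--\eqref{iso:e8} verify in passing.

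Granting these two facts, both inclusions are short. For $\subseteq$: given $v\in H^{1-\beta}_{R,l}(0,1)$, set $w:={}_0I_x^\beta v$; Theorem~\ref{thm:iso} gives $w\in H^1(0,1)$, the first display identifies $v$ with ${}^R_0D_x^\beta w$, and the conversion identity re-expresses this in the asserted form. For $\supseteq$: fix $w\in H^1(0,1)$, let $v$ be the displayed expression, and use the conversion identity to write $v={}^C_0D_x^\beta w={}_0I_x^{1-\beta}Dw$; since $Dw\in L^2(0,1)$, Lemma~\ref{lem:Aux0} gives $v\in L^2(0,1)\hookrightarrow L^\kappa(0,1)$ (as $\kappa\le 2$), and Lemma~\ref{lem:Aux2} gives ${}^R_0D_x^{1-\beta}v=Dw\in L^2(0,1)$, so $v\in H^{1-\beta}_{R,l}(0,1)$. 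The identity for $H^{1-\beta}_{R,r}(0,1)$ then follows verbatim after replacing ${}_0I_x^\beta,\ {}_0I_x^{1-\beta},\ {}^R_0D_x^\beta,\ {}^R_0D_x^{1-\beta},\ x^{-\beta}$ by ${}_xI_1^\beta,\ {}_xI_1^{1-\beta},\ {}^R_xD_1^\beta,\ {}^R_xD_1^{1-\beta},\ (1-x)^{-\beta}$ and using the $I_-^\beta$ halves of Theorem~\ref{thm:iso} and Lemma~\ref{lem:Aux2}.

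The only genuinely nonroutine ingredient is the bookkeeping of the singular term $\tfrac{w(0)}{\Gamma(1-\beta)}x^{-\beta}$. One has to establish the conversion identity for every $w\in H^1(0,1)$ --- which is where the hypothesis $Dw\in L^1(0,1)$ enters, so that ${}_0I_x^{1-\beta}Dw$ is defined and the boundary contribution at $x=0$ can be isolated --- and then verify that the constant $w(0)$ attached to the function $w={}_0I_x^\beta v$ is exactly the one needed for the two descriptions of $H^{1-\beta}_{R,l}(0,1)$ to coincide. Everything else is a routine combination of the semigroup law \eqref{Aux1:e2}, the inversion relations \eqref{Aux2:e1}--\eqref{Aux2:e2}, the boundedness of the fractional integral operators in Lemma~\ref{lem:Aux0}, and the inequality $\beta\kappa<1$ built into the definition of $\kappa(\beta)$.
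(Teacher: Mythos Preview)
Your overall strategy matches the paper's: reduce to the left case by symmetry, use the isomorphism ${}_0I_x^\beta\colon H^{1-\beta}_{R,l}(0,1)\to H^1(0,1)$ of Theorem~\ref{thm:iso}, and invert via ${}^R_0D_x^\beta$. Your inclusion $\supseteq$ is carried out correctly and in more detail than the paper gives.

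The gap is in your $\subseteq$ step, at the clause ``the conversion identity re-expresses this in the asserted form.'' With $w:={}_0I_x^\beta v$ you correctly get $v={}^R_0D_x^\beta w$, but the asserted form is ${}^R_0D_x^\beta w-\frac{w(0)}{\Gamma(1-\beta)}x^{-\beta}$, which by your own conversion identity equals ${}^C_0D_x^\beta w$; these coincide with $v$ only when $w(0)=({}_0I_x^\beta v)(0)=0$. That fails already for $v=x^{-\beta}\in H^{1-\beta}_{R,l}(0,1)$, where ${}_0I_x^\beta v\equiv\Gamma(1-\beta)$ by \eqref{FDS:e5}. In fact $x^{-\beta}$ cannot be written as ${}^C_0D_x^\beta\tilde w={}_0I_x^{1-\beta}D\tilde w$ for any $\tilde w\in H^1(0,1)$: applying ${}^R_0D_x^{1-\beta}$ and using \eqref{Aux2:e1} forces $D\tilde w={}^R_0D_x^{1-\beta}x^{-\beta}=0$, whence ${}_0I_x^{1-\beta}D\tilde w=0\ne x^{-\beta}$. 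Thus the right-hand set in the corollary, as literally written with the minus sign, is strictly contained in $H^{1-\beta}_{R,l}(0,1)$. The paper's own one-line proof has the same defect: it appeals to \eqref{iso:e8} in the form ${}_0I_x^\beta v=w-w(0)$, which presupposes $({}_0I_x^\beta v)(0)=0$. What Theorem~\ref{thm:iso} (specifically \eqref{iso:e9}) genuinely yields is $H^{1-\beta}_{R,l}(0,1)=\{\,{}^R_0D_x^\beta w:\ w\in H^1(0,1)\,\}$, and your argument establishes exactly that once the extraneous subtraction is dropped.
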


\begin{proof} By symmetry we only prove the first equality. From Theorem \ref{thm:iso} ${}_0I_x^{\beta}$ is an isomorphism between $H^{1-\beta}_{R,l}(0,1)$ and $H^1(0,1)$. For any $v \in H^{1-\beta}_{R,l}(0,1)$, there exists a unique $w \in H^1(0,1)$ such that \eqref{iso:e8} holds. We apply ${}^{R}_0D_x^{\beta}$ on \eqref{iso:e8} and use \eqref{Aux2:e2} to finish the proof.
\end{proof}

\begin{corollary}\label{cor:RFred}{\rm (Riemann--Liouville fractional Friedrichs inequality)} Let $0 < \beta <1$. The following estimates hold
$$\begin{aligned}
\|v\|_{L^\kappa(0,1)} & \leq C \Bigl ( \Bigl | \int^1_0 {}_0I_x^{\beta} v(x) dx \Bigr | + \big \|{}^{R}_0 D_x^{1-\beta} v \big \|_{L^2(0,1)} \Bigr ), 
&\forall\ v \in H^{1-\beta}_{R,l}(0,1),\\
\|v\|_{L^\kappa(0,1)} & \leq C \Bigl ( \Bigl | \int^1_0 {}_xI_1^{\beta} v(x) dx \Bigr | + \big \|{}^{R}_x D_1^{1-\beta}v \big \|_{L^2(0,1)} \Bigr ), 
&\forall\ v \in H^{1-\beta}_{R,r}(0,1).
\end{aligned}$$
Consequently, $\|v\|_{H^{1-\beta,0}_{R,l}(0,1)}$ and $\|v\|_{H^{1-\beta,0}_{R,r}(0,1)}$ define norms on $H^{1-\beta,0}_{R,l}(0,1)$ and $H^{1-\beta,0}_{R,r}(0,1)$, respectively.
\end{corollary}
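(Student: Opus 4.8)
The plan is to reduce the fractional Friedrichs inequality to the classical one (Lemma~\ref{lem:Fred}) by pushing everything through the isomorphism established in Theorem~\ref{thm:iso}. By symmetry it suffices to prove the left-sided estimate. Given $v \in H^{1-\beta}_{R,l}(0,1)$, I would set $w := {}_0I_x^{\beta} v$. Theorem~\ref{thm:iso} then guarantees that $w \in H^1(0,1)$ together with the two-sided bound $C_0 \|w\|_{H^1(0,1)} \le \|v\|_{H^{1-\beta}_{R,l}(0,1)} \le C_1 \|w\|_{H^1(0,1)}$; in particular, directly from the definition of $\|\cdot\|_{H^{1-\beta}_{R,l}(0,1)}$ one has $\|v\|_{L^\kappa(0,1)} \le \|v\|_{H^{1-\beta}_{R,l}(0,1)} \le C_1 \|w\|_{H^1(0,1)}$.

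Next I would re-express the two terms on the right-hand side of the claimed inequality in terms of $w$. By the very definition of the left Riemann--Liouville derivative of order $1-\beta$ (take $m=1$, $\alpha=\beta$ in ${}^R_0D_x^{m-\alpha} = D^m\,{}_0I_x^{\alpha}$), one has $Dw = D\,{}_0I_x^{\beta} v = {}^{R}_0D_x^{1-\beta} v \in L^2(0,1)$, while trivially $\int_0^1 w\,dx = \int_0^1 {}_0I_x^{\beta} v(x)\,dx$. Applying the classical Friedrichs inequality to $w \in H^1(0,1)$ gives
$$\|w\|_{H^1(0,1)} \le \|w\|_{L^2(0,1)} + \|Dw\|_{L^2(0,1)} \le C\Bigl(\Bigl|\int_0^1 w\,dx\Bigr| + \|Dw\|_{L^2(0,1)}\Bigr),$$
and substituting the identifications for $Dw$ and $\int_0^1 w\,dx$ and chaining with the bound from the first paragraph yields
$$\|v\|_{L^\kappa(0,1)} \le C\Bigl(\Bigl|\int_0^1 {}_0I_x^{\beta} v(x)\,dx\Bigr| + \big\|{}^{R}_0D_x^{1-\beta} v\big\|_{L^2(0,1)}\Bigr),$$
which is the first estimate; the second follows by the analogous argument for $I^{\beta}_- = {}_xI_1^{\beta}$.

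For the ``consequently'' assertion I would check the norm axioms for $\|v\|_{H^{1-\beta,0}_{R,l}(0,1)} = |v|_{H^{1-\beta}_{R,l}(0,1)} = \|{}^{R}_0D_x^{1-\beta}v\|_{L^2(0,1)}$ on $H^{1-\beta,0}_{R,l}(0,1)$. Absolute homogeneity and the triangle inequality are inherited from the $L^2$ norm of the derivative, so the only point is positive definiteness: if $|v|_{H^{1-\beta}_{R,l}(0,1)} = 0$ then ${}^{R}_0D_x^{1-\beta} v = 0$ a.e., and since membership in $H^{1-\beta,0}_{R,l}(0,1)$ forces $\int_0^1 {}_0I_x^{\beta} v\,dx = 0$, the fractional Friedrichs inequality just proved gives $\|v\|_{L^\kappa(0,1)} = 0$, hence $v = 0$; the argument for $H^{1-\beta,0}_{R,r}(0,1)$ is identical.

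I do not expect a genuine obstacle here, since the analytic content is carried entirely by Theorem~\ref{thm:iso} and the map $v \mapsto {}_0I_x^{\beta}v$ converts the fractional statement into a literal instance of the classical one. The only things to be careful about are matching the normalization of the Riemann--Liouville derivative so that $Dw$ is exactly ${}^{R}_0D_x^{1-\beta}v$ (and not a constant multiple of it), and noting that the inequality $\|v\|_{L^\kappa(0,1)} \le \|v\|_{H^{1-\beta}_{R,l}(0,1)}$ needs nothing beyond the definition of the norm on $H^{1-\beta}_{R,l}(0,1)$.
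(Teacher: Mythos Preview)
Your argument is correct, but it differs from the paper's. The paper does not invoke the classical Friedrichs inequality at all: it simply observes that inequality \eqref{iso:e4}, established inside the proof of Theorem~\ref{thm:iso} via a duality argument, already reads
\[
\big|(v,\phi)_{L^2}\big| \le C\Big(\Big|\int_0^1 {}_0I_x^{\beta}v\,dx\Big| + \big\|D\,{}_0I_x^{\beta}v\big\|_{L^2}\Big)\|\phi\|_{L^{\kappa'}},
\]
so dividing by $\|\phi\|_{L^{\kappa'}}$ and taking the supremum over $\phi\in C_0^\infty(0,1)$ gives the fractional Friedrichs inequality directly. In other words, \eqref{iso:e4} is a sharper intermediate estimate than the norm equivalence \eqref{iso:e3} it was used to prove, and the corollary just harvests that extra sharpness.

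Your route instead uses only the \emph{statement} of Theorem~\ref{thm:iso} (the norm equivalence) and then recovers the missing sharpness by applying Lemma~\ref{lem:Fred} to $w={}_0I_x^{\beta}v$. This is more modular---you never reach into the proof of Theorem~\ref{thm:iso}---at the cost of one extra step. Both approaches are short; the paper's is essentially a one-line citation, while yours makes the reduction to the classical case explicit and would survive even if the proof of Theorem~\ref{thm:iso} were rewritten without the intermediate estimate \eqref{iso:e4}.
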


\begin{proof} By symmetry we only prove the first inequality. We divide \eqref{iso:e4} by $\| \phi \|_{L^{\kappa'}(0,1)}$ and take the supremum over $\phi \in C_0^\infty(0,1)$ to finish the proof.
\end{proof}

\begin{lemma}\label{lem:CFred}{\rm (Caputo fractional Friedrichs inequality)} For $0 < \beta <1$,
\[\begin{aligned}\label{CFred:e1}
\|v\|_{L^\kappa(0,1)} & \leq C \Big ( \Big |\int^1_0 v(x) dx \Big | + \big \|{}^{C}_0 D^{1-\beta}_x v \big \|_{L^2(0,1)}\Big ), & \forall\ v \in  H^{1-\beta}_{C,l}(0,1), \\
\|v\|_{L^\kappa(0,1)} & \leq C \Big ( \Big |\int^1_0 v(x) dx \Big | + \big \|{}^{C}_x D^{1-\beta}_1 v \big \|_{L^2(0,1)}\Big ), & \forall\ v \in  H^{1-\beta}_{C,r}(0,1).
\end{aligned}
\]
Consequently, $\|v\|_{H^{1-\beta,0}_{C,l}(0,1)}$ and $\|v\|_{H^{1-\beta,0}_{C,r}(0,1)}$ define norms on $H^{1-\beta,0}_{C,l}(0,1)$ and $H^{1-\beta,0}_{C,r}(0,1)$ respectively.
\end{lemma}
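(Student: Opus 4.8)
Since the two inequalities are mirror images under $x\mapsto 1-x$, it suffices to prove the first one; once it is available, the ``consequently'' part follows immediately, because on the subspace $H^{1-\beta,0}_{C,l}(0,1)$ one has $\int_0^1 v\,dx=0$, so the inequality collapses to $\|v\|_{L^\kappa(0,1)}\le C\,\|{}^{C}_0D^{1-\beta}_x v\|_{L^2(0,1)}$, which gives the positive-definiteness of $\|\cdot\|_{H^{1-\beta,0}_{C,l}}$ (the other norm axioms being trivial). The plan for the first inequality is a duality argument in the $L^\kappa(0,1)$--$L^{\kappa'}(0,1)$ pairing, entirely parallel to the proofs of Theorem \ref{thm:iso} and Corollary \ref{cor:RFred}. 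Set $g:={}^{C}_0D^{1-\beta}_x v={}_0I^\beta_x Dv\in L^2(0,1)$. By the semigroup property \eqref{Aux1:e2}, ${}_0I^{1-\beta}_x g={}_0I^{1-\beta}_x{}_0I^\beta_x Dv={}_0I^1_x Dv$, hence $v(x)=v(0)+{}_0I^{1-\beta}_x g(x)$ a.e.\ on $(0,1)$, and integrating over $(0,1)$ gives $v(0)=\int_0^1 v\,dx-\int_0^1{}_0I^{1-\beta}_x g\,dx$. Because $\|\cdot\|_{L^1(0,1)}\le\|\cdot\|_{L^\kappa(0,1)}$ on the unit interval, it is enough to establish the single estimate
\begin{equation*}
\big\|{}_0I^{1-\beta}_x g\big\|_{L^\kappa(0,1)}\le C\,\|g\|_{L^2(0,1)},
\end{equation*}
since then $\|v\|_{L^\kappa}\le|v(0)|+\|{}_0I^{1-\beta}_x g\|_{L^\kappa}\le\big|\int_0^1 v\,dx\big|+2\,\big\|{}_0I^{1-\beta}_x g\big\|_{L^\kappa}$, which is exactly the asserted bound.

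This remaining estimate I would prove again by duality: for any $\phi\in C^\infty_0(0,1)$, Lemma \ref{lem:Aux1} gives
\begin{equation*}
\big|\big({}_0I^{1-\beta}_x g,\phi\big)_{L^2(0,1)}\big|=\big|\big(g,{}_xI^{1-\beta}_1\phi\big)_{L^2(0,1)}\big|\le\|g\|_{L^2(0,1)}\,\big\|{}_xI^{1-\beta}_1\phi\big\|_{L^2(0,1)}\le C\,\|g\|_{L^2(0,1)}\,\|\phi\|_{L^{\kappa'}(0,1)},
\end{equation*}
the last inequality being precisely the estimate \eqref{iso:e5}. Dividing by $\|\phi\|_{L^{\kappa'}(0,1)}$, taking the supremum over $\phi\in C^\infty_0(0,1)$, and invoking the density of $C^\infty_0(0,1)$ in $L^{\kappa'}(0,1)$ (here $\kappa'<\infty$ because $\kappa>1$), one obtains the bound on $\|{}_0I^{1-\beta}_x g\|_{L^\kappa(0,1)}$.

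The heart of the argument — and the reason the exponent $\kappa=\kappa(\beta)$ is defined as it is — is the inequality $\|{}_xI^{1-\beta}_1\phi\|_{L^2(0,1)}\le C\,\|\phi\|_{L^{\kappa'}(0,1)}$, i.e.\ the boundedness ${}_0I^{1-\beta}_x\colon L^2(0,1)\to L^\kappa(0,1)$; exactly as in \eqref{iso:e5} it rests on $\beta\kappa<1$, which makes $(s-x)^{-\beta}$ an $L^\kappa$ function of $s$ uniformly in $x$. For $0<\beta<1/2$ this is automatic ($\kappa=2$, $\beta\kappa=2\beta<1$), and in fact the boundedness ${}_0I^{1-\beta}_x\colon L^2\to L^2$ is already Lemma \ref{lem:Aux0}(i); for $1/2\le\beta<1$ it holds precisely because $\kappa=(1-\varepsilon(\beta))/\beta$ is kept strictly below $1/\beta$, so $\beta\kappa=1-\varepsilon(\beta)<1$. (Equivalently, one may obtain this mapping property from Young's convolution inequality, writing $|{}_0I^{1-\beta}_x g|\le\Gamma(1-\beta)^{-1}\big(x^{-\beta}\mathbf{1}_{(0,1)}\big)*|g|$ with $x^{-\beta}\mathbf{1}_{(0,1)}\in L^r(0,1)$, $1/r=1/2+1/\kappa$, which again amounts to $\beta\kappa<1$.) The main technical obstacle is the representation $v=v(0)+{}_0I^{1-\beta}_x g$: one must use that $v$ is absolutely continuous on $[0,1]$ — implicit in the very definition of ${}^{C}_0D^{1-\beta}_x v={}_0I^\beta_x Dv$ — so that $v(0)$ is meaningful and $v=v(0)+{}_0I^1_x Dv$, and that \eqref{Aux1:e2} applies to $Dv\in L^p(0,1)$. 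A slightly more self-contained variant bypasses $v(0)$ altogether: for $\phi\in C^\infty_0(0,1)$ set $\psi(x):=\int_0^x\phi(s)\,ds-x\int_0^1\phi(s)\,ds$, so that $\psi(0)=\psi(1)=0$ and $D\psi=\phi-\int_0^1\phi$; two integrations by parts (with vanishing boundary terms) together with $Dv={}^{R}_0D^{\beta}_x g=D\,{}_0I^{1-\beta}_x g$ give $(v,\phi)_{L^2}=(g,{}_xI^{1-\beta}_1\phi)_{L^2}+\big(\int_0^1\phi\big)\big(\int_0^1 v-(g,{}_xI^{1-\beta}_1 1)_{L^2}\big)$, after which the same estimates plus $\|{}_xI^{1-\beta}_1 1\|_{L^2(0,1)}=\|(1-x)^{1-\beta}\|_{L^2(0,1)}/\Gamma(2-\beta)<\infty$ (from \eqref{FDE:e0}) conclude the proof.
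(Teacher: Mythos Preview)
Your proof is correct, and the decisive analytical ingredient --- the bound $\|{}_xI^{1-\beta}_1\phi\|_{L^2}\le C\|\phi\|_{L^{\kappa'}}$ from \eqref{iso:e5}, equivalently the mapping ${}_0I^{1-\beta}_x\colon L^2\to L^\kappa$ --- is the same one the paper uses. The organization is different, however. The paper estimates $(v,\phi)_{L^2}$ directly: it writes $\phi=D\psi+\int_0^1\phi$ with $\psi(x)=\int_x^1\phi\,ds-(1-x)\int_0^1\phi\,ds$, integrates by parts, and uses ${}_xI^\beta_1\,{}^C_xD^\beta_1\psi=\psi$ (valid since $\psi(1)=0$) together with the adjoint relation to land on $\big({}^C_0D^{1-\beta}_x v,\,{}_xI^{1-\beta}_1(\phi-\int_0^1\phi)\big)_{L^2}$, after which \eqref{iso:e5} closes the argument. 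You instead first represent $v=v(0)+{}_0I^{1-\beta}_x g$, recover $v(0)$ from $\int_0^1 v$, and reduce the whole inequality to the single clean statement $\|{}_0I^{1-\beta}_x g\|_{L^\kappa}\le C\|g\|_{L^2}$, which you then prove by duality. Your ``self-contained variant'' with $\psi(x)=\int_0^x\phi-x\int_0^1\phi$ is in fact exactly the paper's computation up to a sign (your $\psi$ is the negative of the paper's). Your modular route makes the fractional-integral mapping property explicit as a stand-alone lemma; the paper's route handles the pairing in a single pass without naming $v(0)$. Both rely on the same implicit regularity of $v$ (that $Dv$ exists so the Caputo derivative and integration by parts make sense). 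One minor remark: in your parenthetical on Young's inequality, the condition $x^{-\beta}\mathbf 1_{(0,1)}\in L^r$ with $1/r=1/2+1/\kappa$ is $\beta r<1$, i.e.\ $2\beta\kappa<\kappa+2$, which is implied by (but not literally equivalent to) $\beta\kappa<1$; the conclusion is unaffected.
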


\begin{proof} By symmetry we only prove the first inequality. For any $\phi \in C^\infty_0(0,1)$, we decompose $\phi$ as follows
\[(v,\phi)_{L^2} = - \Big (v, D \Big (\int^1_x \phi(s) ds - (1-x) \int^1_0 \phi(s) ds \Big) \Big)_{L^2} + \Big (v,\int^{1}_0 \phi(s) ds \Big)_{L^2}.
\]
The first term on the right-hand side of the equation can be rewritten as
$$\begin{array}{l}
\ds \Big (Dv, {}_xI^{\beta}_1 ~{}^{C}_xD^{\beta}_1 \Big (\int^{1}_x \phi(s) ds-(1-x)\int^{1}_0 \phi(s) ds \Big ) \Big)_{L^2}\\
\ds \quad = \Big ( {}^{C}_0 D_x^{1-\beta} v, {}_xI_1^{1-\beta} D \Bigl(\int^{1}_x \phi ds - (1-x)\int^{1}_0 \phi(s) ds \Big) \Big)_{L^2}\\
\ds \quad =- \Big ({}^{C}_0 D_x^{1-\beta} v, {}_xI_1^{1-\beta} \Big( \phi -\int^{1}_0 \phi ds \Big) \Big)_{L^2}.
\end{array}$$
We combine the preceding two equations and use \eqref{iso:e4} to get
$$\begin{array}{l}
\big |(v,\phi)_{L^2(0,1)} \big | \\
\quad \ds \leq C \Big ( \Big |\int^1_0vdx \Big |+ \big \| {}^{C}_0 D^{1-\beta}_x v \big \|_{L^2(0,1)}\Big ) \\
\ds \qquad \times \Big ( \big \| {}_xI^{1-\beta}_1 \phi \big \|_{L^2(0,1)} + \Big |\int^1_0 \phi dx \Big | \Big )\\
\ds \quad \le C \Big ( \Big |\int^1_0 v(x) dx \Big |+ \big \|{}^{C}_0 D^{1-\beta}_x v \big \|_{L^2(0,1)}\Big )\|\phi\|_{L^{\kappa'}(0,1)}.
\end{array}$$
We divide this inequality by $\|\phi\|_{L^{\kappa'}(0,1)}$ and take the supremum over $\phi \in C_0^\infty(0,1)$ to finish the proof.
\end{proof}

\begin{theorem}\label{thm:embed} {\rm (Embedding theorem)} For $0 < \beta <1/2$ $H^{1-\beta}_{C,l}(0,1)$ and $H^{1-\beta}_{C,r}(0,1)$ are continuously embedded into the H\"{o}lder space $C^{\f1{2}-\beta}[0,1]$
\begin{equation}\label{embed:e0}\begin{aligned}
  \| v \|_{C^{\f1{2}-\beta}[0,1]} \leq C\|v\|_{H^{1-\beta}_{C,l}(0,1)}, \qquad & \forall v \in H^{1-\beta}_{C,l}(0,1), \\
  \| v \|_{C^{\f1{2}-\beta}[0,1]} \leq C\|v\|_{H^{1-\beta}_{C,r}(0,1)}, \qquad & \forall v \in H^{1-\beta}_{C,r}(0,1).
\end{aligned}\end{equation}
\end{theorem}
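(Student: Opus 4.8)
By symmetry I would treat only $H^{1-\beta}_{C,l}(0,1)$, the right-hand version following from the reflection $x\mapsto 1-x$. The plan is to reduce the claim to a H\"{o}lder estimate for the Riemann--Liouville fractional integral of order $1-\beta$ of an $L^2$ function. For $v\in H^{1-\beta}_{C,l}(0,1)$, set $g:={}^C_0 D_x^{1-\beta}v={}_0I_x^{\beta}Dv\in L^2(0,1)$; exactly as in \eqref{iso:e7}--\eqref{iso:e8}, the semigroup property \eqref{Aux1:e2} gives ${}_0I_x^{1-\beta}g={}_0I_x^{1}Dv=v(x)-v(0)$, i.e. the representation
\[ v(x) = v(0) + {}_0I_x^{1-\beta}g(x), \qquad g\in L^2(0,1). \]
Since $0<\beta<1/2$ we have $\kappa(\beta)=2$ and hence $L^\kappa(0,1)=L^2(0,1)$. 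Thus it suffices to bound $\|{}_0I_x^{1-\beta}g\|_{C^{1/2-\beta}[0,1]}$ and $|v(0)|$ in terms of $\|g\|_{L^2(0,1)}$ and $\|v\|_{L^2(0,1)}$.

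The core of the argument, and the step I expect to require the most care, is the H\"{o}lder bound
\[ \bigl|{}_0I_x^{1-\beta}g(x_2)-{}_0I_x^{1-\beta}g(x_1)\bigr|\le C\,|x_2-x_1|^{1/2-\beta}\,\|g\|_{L^2(0,1)},\qquad 0\le x_1<x_2\le 1. \]
Writing $h=x_2-x_1$, I would split the difference of the two defining integrals as $\int_{x_1}^{x_2}(x_2-s)^{-\beta}g(s)\,ds$ plus $\int_0^{x_1}\bigl((x_2-s)^{-\beta}-(x_1-s)^{-\beta}\bigr)g(s)\,ds$. The first term is controlled by Cauchy--Schwarz using $\int_{x_1}^{x_2}(x_2-s)^{-2\beta}\,ds=h^{1-2\beta}/(1-2\beta)$, which is finite precisely because $\beta<1/2$ and which is also where the exponent $1/2-\beta$ is produced. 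For the second term, after Cauchy--Schwarz I would substitute $t=x_1-s$ and then $t=hu$, reducing matters to the finiteness of $\int_0^\infty\bigl(u^{-\beta}-(u+1)^{-\beta}\bigr)^2\,du$; this converges because the integrand is comparable to $u^{-2\beta}$ near $u=0$ (integrable since $\beta<1/2$) and to $u^{-2\beta-2}$ near $u=\infty$. Taking $x_1=0$ in this bound and using ${}_0I_x^{1-\beta}g\big|_{x=0}=0$ also yields $\|{}_0I_x^{1-\beta}g\|_{L^\infty(0,1)}\le C\|g\|_{L^2(0,1)}$.

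To control the constant, I would write $v(0)=v(x)-{}_0I_x^{1-\beta}g(x)$ and integrate over $x\in(0,1)$, obtaining $|v(0)|\le\|v\|_{L^1(0,1)}+\|{}_0I_x^{1-\beta}g\|_{L^1(0,1)}\le\|v\|_{L^2(0,1)}+C\|g\|_{L^2(0,1)}$. Adding the three estimates gives
\[ \|v\|_{C^{1/2-\beta}[0,1]}\le C\bigl(\|v\|_{L^2(0,1)}+\|{}^C_0 D_x^{1-\beta}v\|_{L^2(0,1)}\bigr)\le C\,\|v\|_{H^{1-\beta}_{C,l}(0,1)}, \]
which is the first estimate in \eqref{embed:e0}; the second is identical up to reflection. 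The only genuinely delicate point is the convergence of $\int_0^\infty(u^{-\beta}-(u+1)^{-\beta})^2\,du$ --- the near-diagonal behaviour of the difference of the two singular kernels --- which is exactly the step that would fail for $\beta\ge 1/2$ and which forces the H\"{o}lder exponent to be $1/2-\beta$.
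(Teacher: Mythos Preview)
Your argument is correct and coincides with the paper's at the technical core: both reduce to the same two-term splitting of ${}_0I_{x_2}^{1-\beta}g - {}_0I_{x_1}^{1-\beta}g$, apply Cauchy--Schwarz, and rely on the convergence of $\int_0^\infty\bigl(u^{-\beta}-(u+1)^{-\beta}\bigr)^2\,du$ (the paper writes this as \eqref{embed:e3} with the substitution $\theta=(x-s)/(y-x)$). The only noteworthy differences are peripheral: the paper first derives the identity \eqref{embed:e1} via the adjoint relation \eqref{Aux1:e1} for smooth $v$ and then passes to general $v\in H^{1-\beta}_{C,l}(0,1)$ by density, whereas you invoke the representation $v=v(0)+{}_0I_x^{1-\beta}g$ directly via the semigroup property---this is cleaner but tacitly uses that membership in the Caputo space entails $Dv\in L^1(0,1)$ so that \eqref{Aux1:e2} and the fundamental theorem apply; and your bound for $|v(0)|$ by integrating the representation over $(0,1)$ is a bit simpler than the paper's device of writing $v(0)=-\int_0^1 D\bigl((1-x)v(x)\bigr)\,dx$.
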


\begin{proof} First we prove that for each $ v \in C^\infty(0,1) \bigcap H^{1-\beta}_{C,l}(0,1)$, $v$ is H\"{o}lder continuous in $(0,1)$ with index $1/2-\beta$ so $v$ can be continuously extended to $[0,1]$. For any $x, y \in (0,1)$, without loss of generality, we assume that $ 0 < x <y <1$. We use \eqref{FDS:e5} and \eqref{Aux1:e1} (on $[0,x]$ and $[0,y]$) to obtain
\begin{equation}\label{embed:e1}\begin{array}{l}
\ds v(y) - v(x) = \int_x^y Dv(s) ds = \int_0^y Dv(s) ds - \int_0^x Dv(s) ds \\[0.1in]
\ds \quad = \frac{1}{\Gamma(1-\beta)} \Big[ \int^y_0 {}_sI^{\beta}_y(y-s)^{-\beta}Dvds - \int^x_0 {}_sI^{\beta}_x(x-s)^{-\beta}Dvds  \Big] \\[0.15in]
\ds \quad = \frac{1}{\Gamma(1-\beta)}\Big[ \int^y_0 (y-s)^{-\beta}{}^{}_0I^{\beta}_sDvds
      - \int^x_0 (x-s)^{-\beta}{}_0I^{\beta}_sDvds \Big] \\[0.15in]
\ds \quad = \frac{1}{\Gamma(1-\beta)}\Big[\int_x^y (y-s)^{-\beta}{}^{C}_0D^{1-\beta}_s vds \\[0.1in]
\ds \qquad + \int^x_0 \big((y-s)^{-\beta}-(x-s)^{-\beta}\big){}^{C}_0D^{1-\beta}_s vds \Big].
\end{array}\end{equation}

\begin{equation}\label{embed:e2}
\begin{aligned}
\Bigl | \int_x^y (y-s)^{-\beta}{}^{C}_0D^{1-\beta}_s vds \Big | & \le \Bigl ( \int_x^y (y-s)^{-2\beta} ds \Big )^{1/2} \big \|{}^{C}_0D^{1-\beta}_s v \big \|_{L^2(0,1)} \\
& \le C | y - x |^{\f1{2} - \beta} \big \|{}^{C}_0D^{1-\beta}_s v \big \|_{L^2(0,1)}. \end{aligned} \end{equation}
We let $\theta = (x-s)/(y-x)$ to obtain
\begin{equation}\label{embed:e3}\begin{aligned}
& \int^x_0 \big ((y-s)^{-\beta}-(x-s)^{-\beta} \big )^2ds \\
& \quad = (y - x)^{1-2\beta} \int^x_0\Big(\big(1+\frac{x-s}{y-x}\big)^{-\beta}-\big(\frac{x-s}{y-x}\big)^{-\beta}\Big)^2 d\frac{x-s}{y-x}\\
& \quad = (y - x)^{1-2\beta} \int^{x/(y-x)}_0 ((1+\theta)^{-\beta} - \theta^{-\beta})^2 d\theta \\
& \quad \leq C (y - x)^{1-2\beta} \int_0^\infty \left (\f1{\theta^{\beta}} - \f1{(1+\theta)^{\beta}} \right)^2 d\theta 
\le C (y - x)^{1-2\beta}.
\end{aligned}\end{equation}
It is clear that the improper integral on the right-hand side converges near $\theta = 0$. Tts convergence at $\theta = \infty$ is ensured by the following expansion for $\theta >>1$
\begin{equation*}
\f1{\theta^{\beta}}-\f1{(1+\theta)^\beta}
= \ds \f{(1+\theta)^\beta - \theta^{\beta}}{\theta^{\beta}(1+\theta)^\beta}
= \f{\ds \left(1+\f1{\theta}\right)^\beta - 1}{\ds \theta^{\beta}\left(1+\f1{\theta} \right)^\beta}
= \ds \f{\ds \beta + O\left(\f1{\theta}\right)}{\ds \theta^{1+\beta}\left(1+\f1{\theta}\right)^\beta}.
\end{equation*}
We combine the preceding estimates to obtain
\begin{equation}\label{embed:e4}
|v(x)-v(y)| \leq C|x-y|^{\f1{2}-\beta} \big \|{}^{C}_0D_x^{1-\beta}v \big \|_{L^2(0,1)}, \quad \ \forall x,y \in (0,1).
\end{equation}
Thus, $v$ is uniformly continuous in $(0,1)$ so $v$ can be continuously extended to $[0,1]$.  

Next for each $v \in C[0,1] \bigcap H^{1-\beta}_{C,l}(0,1) \bigcap C^\infty(0,1)$, we use \eqref{FDE:e0} with $\alpha = \beta$ and $\mu = 1 - \beta$ and \eqref{Aux1:e1} to get
\[\begin{aligned}
| v(0) | & = \Bigl | - \int^1_0 D \big ((1-x)v(x) \big)dx=-\int^1_0 \big ((1-x)Dv-v \big)dx \Bigr | \\
& = \Bigl | - \frac{1}{\Gamma(2-\beta)} \int^1_0 {}^{}_xI^{\beta}_1(1-x)^{1-\beta}Dvdx +\int^1_0 vdx \Bigr | \\
& = \Bigl | - \frac{1}{\Gamma(2-\beta)}\int^1_0 (1-x)^{1-\beta}{}^{}_0 I^{\beta}_xDvdx + \int^1_0 vdx \Bigr |  \leq C\|v\|_{H^{1-\beta}_{C,l}(0,1)}.
\end{aligned}\]
We combine this estimate with \eqref{embed:e4} to prove \eqref{embed:e0} holds for any $v \in  H^{1-\beta}_{C,l}(0,1) \cap C^\infty(0,1) \cap C[0,1]$.

Finally, for any $v \in H^{1-\beta}_{C,l}(0,1)$, since $C^\infty(0,1) \cap H^{1-\beta}_{C,l}(0,1)$ is dense in $H^{1-\beta}_{C,l}(0,1)$ and is equal to $C^\infty(0,1) \cap H^{1-\beta}_{C,l}(0,1) \cap C[0,1]$, there exists a sequence of functions $\{v_n\}^\infty_{n=1} \subset C^\infty(0,1) \cap H^{1-\beta}_{C,l}(0,1) \cap C[0,1]$, which converges to $v$ in $H^{1-\beta}_{C,l}(0,1)$. It follows from \eqref{embed:e0} that $\{v_n\}^\infty_{n=1}$ is a Cauchy sequence in $C^{\f1{2}-\beta}[0,1]$. The completeness of $C^{\f1{2}-\beta}[0,1]$ concludes that $v \in C^{\f1{2}-\beta}[0,1]$ and \eqref{embed:e0} holds.
\end{proof}

\begin{remark} This theorem shows that the Caputo fractional derivative spaces $H^{1-\beta}_{C,l}(0,1)$ and $H^{1-\beta}_{C,r}(0,1)$ have similar embedding properties to the standard fractional Sobolev spaces $H^{1-\beta}(0,1)$. However,  such embedding does not hold for the Riemann--Liouville fractional derivative spaces $H^{1-\beta}_{R,l}(0,1)$ and $H^{1-\beta}_{R,r}(0,1)$. For example, $x^{-\beta} \in H^{1-\beta}_{R,l}(0,1)$ for $0 < \beta < 1$ (see \eqref{FDS:e5}) but $x^{-\beta} \notin C[0,1]$. 
\end{remark}

Finally we apply Theorem \ref{thm:embed} to extend Lemma \ref{lem:Aux2} to $H^{1-\beta}_{C,l}(0,1)$ and $H^{1-\beta}_{C,r}(0,1)$.

\begin{lemma}\label{lem:Aux3}
Let $0 < \beta < 1/2$. For any $v \in H^{1-\beta}_{C,l}(0,1)$ with $v(0)=0$ and $w \in H^{1-\beta}_{C,r}(0,1)$ with $w(1)=0$, identities \eqref{Aux2:e2} hold. 
\end{lemma}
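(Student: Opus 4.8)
By the substitution $x \mapsto 1-x$, which swaps the left- and right-sided operators, it suffices to prove the left-sided identities in \eqref{Aux2:e2} (with $\mu = 1-\beta$) for $v \in H^{1-\beta}_{C,l}(0,1)$ with $v(0) = 0$; the right-sided ones for $w$ with $w(1)=0$ follow symmetrically. Since $0 < \beta < 1/2$ we have $\kappa(\beta) = 2$, hence $L^\kappa(0,1) = L^2(0,1)$, and by Theorem \ref{thm:embed} $v \in C[0,1]$, so that ``$v(0)=0$'' is meaningful. My first (and shortest) route is direct. Membership of $v$ in $H^{1-\beta}_{C,l}(0,1)$ entails in particular that $Dv \in L^1(0,1)$ (so that ${}^C_0D^{1-\beta}_x v = {}_0I^\beta_x Dv$ is defined and, by hypothesis, lies in $L^2(0,1)$); hence the continuous function $v$ is absolutely continuous on $[0,1]$ and, using $v(0)=0$, $v(x) = \int_0^x Dv = {}_0I^1_x Dv(x)$. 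The semigroup property \eqref{Aux1:e2} now gives
\[ v = {}_0I^1_x Dv = {}_0I^{1-\beta}_x\,{}_0I^\beta_x Dv = {}_0I^{1-\beta}_x\,{}^C_0D^{1-\beta}_x v, \]
the Caputo form of the second identity, and, since $D\,{}_0I^1_x h = h$ a.e. for $h := {}_0I^\beta_x Dv \in L^2(0,1)$,
\[ {}^R_0D^{1-\beta}_x v = D\,{}_0I^\beta_x v = D\,{}_0I^\beta_x\,{}_0I^1_x Dv = D\,{}_0I^1_x\,{}_0I^\beta_x Dv = {}_0I^\beta_x Dv = {}^C_0D^{1-\beta}_x v, \]
the first identity; combining the two yields ${}_0I^{1-\beta}_x\,{}^R_0D^{1-\beta}_x v = v$ as well, which exhausts \eqref{Aux2:e2}.

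A second, more conventional route --- the one signalled by the sentence preceding the lemma --- is to extend Lemma \ref{lem:Aux2} from $C^1[0,1]$ by density. One approximates $v$ by $v_n \in C^\infty(0,1) \cap H^{1-\beta}_{C,l}(0,1)$ in the norm of $H^{1-\beta}_{C,l}(0,1)$; since such $v_n$ lie in $C[0,1]$ (see the proof of Theorem \ref{thm:embed}) and the convergence is also in $C^{\f12-\beta}[0,1]$, one gets $v_n(0) \to v(0)=0$, and subtracting the constant $v_n(0)$ --- which does not change the Caputo seminorm because ${}^C_0D^{1-\beta}_x c = {}_0I^\beta_x Dc = 0$, and changes the $L^2$-norm by $|v_n(0)| \to 0$ --- we may assume $v_n(0)=0$. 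After a regularization making each $v_n \in C^1[0,1]$, Lemma \ref{lem:Aux2} applies, and one passes to the limit using $\kappa=2$ and the $L^2$-boundedness of ${}_0I^\beta_x$ and ${}_0I^{1-\beta}_x$ (Lemma \ref{lem:Aux0}): from $v_n\to v$ and ${}^C_0D^{1-\beta}_x v_n \to {}^C_0D^{1-\beta}_x v$ in $L^2(0,1)$ one gets ${}_0I^\beta_x v_n \to {}_0I^\beta_x v$ with derivatives $D\,{}_0I^\beta_x v_n = {}^C_0D^{1-\beta}_x v_n$ also converging in $L^2(0,1)$, so ${}_0I^\beta_x v\in H^1(0,1)$ and ${}^R_0D^{1-\beta}_x v = {}^C_0D^{1-\beta}_x v$; then applying ${}_0I^{1-\beta}_x$ to the convergence of Caputo derivatives and comparing with $v_n \to v$ gives the two remaining identities.

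The main obstacle is controlling the behaviour of $v$ at the endpoint. In the direct route it is the assertion that membership in $H^{1-\beta}_{C,l}(0,1)$ together with the hypothesis forces $v$ to be absolutely continuous with a well-defined $v(0)=0$: this rests on unpacking the definition of the space (to get $Dv \in L^1(0,1)$) and on Theorem \ref{thm:embed} (to get $v \in C[0,1]$), and without it neither the hypothesis nor the step $v={}_0I^1_x Dv$ would be legitimate. In the density route the delicate point is instead the regularization $C^\infty(0,1)\cap C[0,1] \ni v_n \rightsquigarrow v_n \in C^1[0,1]$ with $v_n(0)=0$ preserved: a crude cut-off near $0$ or $1$ can blow up the fractional seminorm $|\cdot|_{H^{1-\beta}_{C,l}(0,1)}$, so the correction must be constructed so that its Caputo derivative stays small in $L^2(0,1)$. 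Once the boundary behaviour is under control, the remaining steps are routine, the essential simplification being $\kappa(\beta)=2$ for $0<\beta<1/2$.
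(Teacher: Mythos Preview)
Your proposal is correct, and both routes you sketch are valid; they differ from the paper's argument in instructive ways. The paper proceeds by density and \emph{duality}: it picks $v_n \in C^\infty(0,1)\cap H^{1-\beta}_{C,l}(0,1)\cap C[0,1]$ with $v_n(0)=0$ converging to $v$, then tests against $\phi \in C^\infty_0(0,1)$ and uses the adjointness relation \eqref{Aux1:e1} together with integration by parts (the boundary terms vanish because $v_n(0)=0$ and ${}_xI^{1-\beta}_1\phi|_{x=1}=0$) to transfer all fractional operators onto $\phi$, where they are classically defined; the identities then follow by passing to the limit inside the $L^2$ pairing. Your Route~2 is the same density idea but executed via \emph{strong} convergence and the $L^2$-boundedness of ${}_0I^\alpha_x$ (Lemma~\ref{lem:Aux0}) rather than duality; the price is the extra regularization step $v_n\in C^1[0,1]$ needed before Lemma~\ref{lem:Aux2} applies, which the paper's weak formulation avoids entirely since $\phi$ carries all the smoothness. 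Your Route~1 is genuinely different and more direct: once one accepts that membership in $H^{1-\beta}_{C,l}(0,1)$ presupposes $Dv\in L^1(0,1)$ (the standard convention for Caputo derivatives, though the paper does not spell it out), the semigroup identity $v={}_0I^1_xDv={}_0I^{1-\beta}_x{}_0I^\beta_xDv$ gives everything in two lines, bypassing approximation altogether. The paper's route is more robust to ambiguities in the definition of the space; yours is shorter when the definition is read strictly.
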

\begin{proof} By symmetry, we only prove the estimates for $v$. For any $v \in H^{1-\beta}_{C,l}(0,1)$ with $v(0)=0$ and any $\phi \in C^\infty_0(0,1)$, there exists a sequence of functions $\{v_n\}_{n=1}^\infty \subset C^\infty(0,1) \cap H^{1-\beta}_{C,l}(0,1) \cap C[0,1]$ with $v_n(0)=0$ for any $ 1 \leq n < \infty$ such that $v_n$ converges to $v$ in $H^{1-\beta}_{C,l}(0,1)$. We use \eqref{Aux2:e2} for $\phi$, \eqref{Aux1:e1}, and the fact that   $v_n(0)=0$ and ${}_xI^{1-\beta}_1 \phi=0$ at $x=1$ to obtain
$$\begin{array}{rl}
\ds \big ({}^{C}_0D^{\beta}_x v, \phi \big)_{L^2(0,1)} & \ds = \lim_{n \rightarrow \infty} \big ({}^{C}_0D^{\beta}_x v_n,\phi \big )_{L^2(0,1)} = \lim_{n \rightarrow \infty} \big (D v_n,{}^{}_xI^{1-\beta}_1\phi \big)_{L^2(0,1)} \\
&\ds = - \lim_{n \rightarrow \infty} \big( v_n, {}^{R}_xD^{1-\beta}_1 \phi \big)_{L^2(0,1)} 
= - \big ( v, {}^{R}_xD^{1-\beta}_1 \phi \big)_{L^2(0,1)} \\
&\ds = - \big ( v, {}^{C}_xD^{1-\beta}_1 \phi \big)_{L^2(0,1)} 
   = - \big ( {}^{}_0I^{1-\beta}_x v,D\phi \big )_{L^2(0,1)} \\
& \ds = \big ( {}^{R}_0D^{\beta}_x v, \phi \big )_{L^2(0,1)}.
\end{array}$$
Therefore, ${}^{C}_0D^{\beta}_x v={}^{R}_0D^{\beta}_xv$ and \eqref{Aux2:e2} holds. Similarly, we have 
\[ \begin{aligned}
& \big ({}_0I^{\beta}_x{}^{C}_0D^{\beta}_x v, \phi \big )_{L^2(0,1)} =
\big ({}^{C}_0D^{\beta}_x v,{}_xI^{\beta}_1 \phi \big)_{L^2(0,1)} \\
& \quad = \lim_{n \rightarrow \infty} \big ({}^{C}_0D^{\beta}_x v_n,{}^{}_xD^{-\beta}_1\phi \big)_{L^2(0,1)} 
=\lim_{n \rightarrow \infty} \big ( D v_n,{}_xI^{1-\beta}_1{}^{}_xI^{\beta}_1 \phi \big )_{L^2(0,1)} \\
& \quad = \lim_{n \rightarrow \infty} \big (D v_n, {}_xI_1 \phi \big )_{L^2(0,1)} = - \lim_{n \rightarrow \infty} \big (v_n, D{}_xI_1\phi \big)_{L^2(0,1)} = ( v,\phi)_{L^2(0,1)}.
\end{aligned} \]
Therefore, ${}_0I^{\beta}_x{}^{R}_0D^{\beta}_x v={}_0I^{\beta}_x{}^{C}_0D^{\beta}_x v=v$ for any $ v \in H^{1-\beta}_{C,l}(0,1)$. Similarly, we can prove the rest equality in \eqref{Aux2:e2}.  
\end{proof}

We are now in the position to study the wellposedness of the different combinations of FDEs \eqref{FDE:Caputo}, \eqref{FDE:Conserv}, and \eqref{FDE:RL} enclosed with the different Neumann boundary conditions \eqref{FDE:Nbc}, \eqref{FDE:Cbc}, and \eqref{FDE:Rbc}.

\section{The Caputo FDE}
\setcounter{section}{4}\setcounter{equation}{0}

We study the Caputo FDE \eqref{FDE:Caputo} enclosed with the classical Neumann boundary condition \eqref{FDE:Nbc}. We multiply \eqref{FDE:Caputo} by any $v \in H^{1-\beta}_{R,r}(0,1)$, use Lemma \ref{lem:Aux1}, integrate the resulting equation by parts from 0 to 1, and incorporate the boundary condition \eqref{FDE:Nbc} to obtain a Petrov-Galerkin weak formulation: find $u \in H^1(0,1)$ such that
\begin{equation}\label{Caputo:e1}
A_C(u,v) = l_C(v), \quad \forall\ v \in H^{1-\beta}_{R,r}(0,1),
\end{equation}
where the bilinear form $A_C(\cdot,\cdot): H^1(0,1) \times H^{1-\beta}_{R,r}(0,1) \longrightarrow \mathbb{R}$ and the right-hand side $l_C(\cdot): H^{1-\beta}_{R,r}(0,1) \longrightarrow \mathbb{R}$ are defined to be
\begin{equation}\label{Caputo:e1a}\begin{array}{rl}
A_C(w,v) & := - \bigl (Dw, {}^{R}_xD_1^{1-\beta}v \big )_{L^2(0,1)}, \\[0.1in]
l_C(v) & := \langle f, v \rangle + a_1~{}_xI_1^{\beta}v|_{x=1} - a_0~{}_xI_1^{\beta}v|_{x=0}.
\end{array}\end{equation}

\begin{theorem}\label{thm:Caputo} Let $0 < \beta < 1$. Assume $f \in (H^{1-\beta}_{R,r}(0,1))'$ satisfies the constraint
\begin{equation}\label{Caputo:e2}
\langle f, (1-x)^{-\beta}\rangle + \Gamma(1-\beta)  ( a_1 - a_0 ) = 0.
\end{equation}
Then \eqref{Caputo:e1} has a unique solution $u^* \in H^{1,0}(0,1)$ with a stability estimate
\begin{equation}\label{Caputo:e3}
\| u^* \|_{H^1(0,1)} \le C \big ( \|f\|_{(H^{1-\beta,0}_{R,r}(0,1))^\prime}+|a_0|+|a_1| \big ).
\end{equation}
Furthermore, any weak solution $u \in H^1(0,1)$ to \eqref{Caputo:e1} can be expressed as $u = u^* + C$ with $C \in \mathbb{R}$ being an arbitrary constant, and vice versa.
\end{theorem}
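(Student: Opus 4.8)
The plan is to convert \eqref{Caputo:e1} into a classical Neumann problem through the isomorphism of Theorem~\ref{thm:iso}. Given $v\in H^{1-\beta}_{R,r}(0,1)$, set $z:={}_xI^{\beta}_1 v$. By Theorem~\ref{thm:iso} the assignment $v\mapsto z$ is a linear isomorphism of $H^{1-\beta}_{R,r}(0,1)$ onto $H^1(0,1)$ with equivalent norms, and since $\int_0^1 z\,dx=\int_0^1{}_xI^{\beta}_1 v\,dx$, it restricts by \eqref{FDS:e2} to an isomorphism of $H^{1-\beta,0}_{R,r}(0,1)$ onto $H^{1,0}(0,1)$. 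Moreover ${}^R_xD^{1-\beta}_1 v=(-D){}_xI^{\beta}_1 v=-Dz$, so $\|{}^R_xD^{1-\beta}_1 v\|_{L^2(0,1)}=\|Dz\|_{L^2(0,1)}$ and
$$A_C(u,v)=-\bigl(Du,\,{}^R_xD^{1-\beta}_1 v\bigr)_{L^2(0,1)}=\bigl(Du,Dz\bigr)_{L^2(0,1)}.$$
Writing $v_z$ for the inverse image of $z$ and noting that ${}_xI^{\beta}_1 v_z|_{x=1}=z(1)$ and ${}_xI^{\beta}_1 v_z|_{x=0}=z(0)$ are the point values of the $H^1$-representative of $z$, the right-hand side becomes $l_C(v_z)=\langle f,v_z\rangle+a_1 z(1)-a_0 z(0)=:L(z)$. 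Thus \eqref{Caputo:e1} is equivalent to: find $u\in H^1(0,1)$ with $(Du,Dz)_{L^2(0,1)}=L(z)$ for every $z\in H^1(0,1)$.

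Next I would record the two properties of $L$ that drive the argument. For $z\in H^{1,0}(0,1)$ the corresponding $v_z$ lies in $H^{1-\beta,0}_{R,r}(0,1)$ with $\|v_z\|_{H^{1-\beta,0}_{R,r}(0,1)}=\|Dz\|_{L^2(0,1)}$, while Lemma~\ref{lem:Fred} bounds $\|z\|_{L^2(0,1)}$, $|z(0)|$ and $|z(1)|$ by $C\|Dz\|_{L^2(0,1)}$; hence
$$|L(z)|\le C\bigl(\|f\|_{(H^{1-\beta,0}_{R,r}(0,1))'}+|a_0|+|a_1|\bigr)\,\|Dz\|_{L^2(0,1)},\qquad z\in H^{1,0}(0,1).$$
The decisive point is the value of $L$ on the constants. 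By \eqref{FDE:e0} with $\alpha=\beta$, $\mu=-\beta$ one has ${}_xI^{\beta}_1(1-x)^{-\beta}=\Gamma(1-\beta)$, and $(1-x)^{-\beta}\in H^{1-\beta}_{R,r}(0,1)$ because ${}^R_xD^{1-\beta}_1(1-x)^{-\beta}=0$ (the right-hand analogue of \eqref{FDS:e5}) and $(1-x)^{-\beta}\in L^{\kappa}(0,1)$ (as $\kappa\beta<1$). Therefore the inverse image of the constant function $1$ is $(1-x)^{-\beta}/\Gamma(1-\beta)$, so that
$$L(1)=\frac{1}{\Gamma(1-\beta)}\langle f,(1-x)^{-\beta}\rangle+a_1-a_0,$$
which vanishes exactly under the compatibility constraint \eqref{Caputo:e2}.

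For existence and stability I would invoke the Riesz representation theorem on the Hilbert space $H^{1,0}(0,1)$, which by Lemma~\ref{lem:Fred} carries the inner product $(Du,Dz)_{L^2(0,1)}$ inducing the norm $\|\cdot\|_{H^{1,0}(0,1)}$. Since the reduced bilinear form is precisely this inner product and $L$ is a bounded functional on $H^{1,0}(0,1)$ by the estimate above, there is a unique $u^*\in H^{1,0}(0,1)$ with $(Du^*,Dz)_{L^2(0,1)}=L(z)$ for all $z\in H^{1,0}(0,1)$, and $\|Du^*\|_{L^2(0,1)}$ is bounded by the displayed constant, which gives \eqref{Caputo:e3} after another use of Lemma~\ref{lem:Fred}. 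To extend the test set to all of $H^1(0,1)$, decompose $z=z_0+c$ with $z_0\in H^{1,0}(0,1)$ and $c\in\mathbb{R}$; then $Dc=0$ and, by \eqref{Caputo:e2}, $L(c)=cL(1)=0$, so $(Du^*,Dz)_{L^2(0,1)}=(Du^*,Dz_0)_{L^2(0,1)}=L(z_0)=L(z)$. Translating back through the isomorphism, $u^*$ solves \eqref{Caputo:e1}.

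Finally, if $u\in H^1(0,1)$ is any weak solution then $A_C(u-u^*,v)=0$ for all $v$, i.e.\ $(D(u-u^*),Dz)_{L^2(0,1)}=0$ for all $z\in H^1(0,1)$; taking $z=u-u^*$ forces $D(u-u^*)=0$, hence $u=u^*+C$ with $C\in\mathbb{R}$ constant. Conversely $A_C(C,v)=-\bigl(DC,\,{}^R_xD^{1-\beta}_1 v\bigr)_{L^2(0,1)}=0$, so $u^*+C$ is a weak solution for every constant $C$. I expect the main obstacle to be making the reduction to $L$ rigorous: justifying that the inverse image under ${}_xI^{\beta}_1$ of the constant function $1$ is exactly $(1-x)^{-\beta}/\Gamma(1-\beta)$ and that the boundary terms ${}_xI^{\beta}_1 v|_{x=0,1}$ are the point values of the $H^1$-function $z$ even when $v$ is singular, and then verifying that on $H^{1,0}(0,1)$ the $a_0,a_1$ terms (and the $f$ term) are controlled by $\|Dz\|_{L^2(0,1)}$ with $f$ measured only in the subspace dual norm $\|f\|_{(H^{1-\beta,0}_{R,r}(0,1))'}$, which is what yields the sharp estimate \eqref{Caputo:e3}.
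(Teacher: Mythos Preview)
Your argument is correct and uses the same ingredients as the paper (the isomorphism of Theorem~\ref{thm:iso}, the identification of $(1-x)^{-\beta}$ with the constant test function, and Lax--Milgram/Riesz on a mean-zero subspace), but the bookkeeping is organized differently. The paper stays on the fractional side: it sets up an auxiliary \emph{symmetric} Galerkin problem $A_{R,r}(\xi,v)=({}^{R}_xD_1^{1-\beta}\xi,{}^{R}_xD_1^{1-\beta}v)_{L^2}=l_C(v)$ on $H^{1-\beta,0}_{R,r}(0,1)$, applies Lax--Milgram there to obtain $\xi^*$, and only afterwards defines $u^*:={}_xI_1^{\beta}\xi^*\in H^{1,0}(0,1)$. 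You instead push the test functions through ${}_xI^{\beta}_1$ immediately, turning the Petrov--Galerkin form $A_C$ into the classical Dirichlet form $(Du,Dz)_{L^2}$ on $H^1(0,1)$, and then invoke Riesz on $H^{1,0}(0,1)$. The two routes are equivalent under the isomorphism of Theorem~\ref{thm:iso}; yours has the advantage of making the compatibility condition \eqref{Caputo:e2} appear transparently as $L(1)=0$ and of reducing the problem to the familiar integer-order Neumann setting, while the paper's version keeps the analysis inside the fractional space $H^{1-\beta,0}_{R,r}(0,1)$, which is consistent with how the later theorems (e.g.\ Theorem~\ref{thm:RR}) are phrased.
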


\begin{proof} Since $(1-x)^{-\beta} \in L^\kappa(0,1)$ and ${}^{}_xI_1^{\beta}(1-x)^{-\beta} = \Gamma(1-\beta)$, $(1-x)^{-\beta} \in H^{1-\beta}_{R,r}(0,1)$. Taking $v=(1-x)^{-\beta}$ in \eqref{Caputo:e1} concludes that \eqref{Caputo:e2} is a necessary condition for the existence of a weak solution.

To prove the existence of a weak solution to \eqref{Caputo:e1}, we introduce an auxiliary Galerkin formulation: find $\xi \in H^{1-\beta,0}_{R,r}(0,1)$ such that
\begin{equation}\label{Caputo:e4}
A_{R,r}(\xi,v) = l_C(v), \quad \forall v \in H^{1-\beta,0}_{R,r}(0,1),
\end{equation}
where $A_{R,r}(\cdot,\cdot)$ is defined by
\begin{equation*}
A_{R,r}(\xi,v) := \big ({}^{R}_xD_1^{1-\beta}\xi, {}^{R}_xD_1^{1-\beta}v \big)_{L^2(0,1)}.
\end{equation*}
It is clear that for any $w$ and $v$ in $H^{1-\beta,0}_{R,r}(0,1)$
\begin{equation*}
\big | A_{R,r}(\xi,v) \big | \leq \|{}^{R}_xD_1^{1-\beta}\xi\|_{L^2(0,1)}\|{}^{R}_xD_1^{1-\beta}v \|_{L^2(0,1)} =\|\xi\|_{H^{1-\beta,0}_{R,r}(0,1)}\|v \|_{H^{1-\beta,0}_{R,r}(0,1)}.
\end{equation*}
Moreover, Corollary \ref{cor:RFred} concludes that 
\begin{equation*}
A_{R,r}(\xi,\xi) = \|{}^{R}_xD_1^{1-\beta}\xi\|^2_{L^2(0,1)} = \|\xi\|^2_{H^{1-\beta,0}_{R,r}(0,1)}
\end{equation*}
is coercive on $H^{1-\beta,0}_{R,r}(0,1) \times H^{1-\beta,0}_{R,r}(0,1)$. Furthermore, Theorem \ref{thm:iso} states that for any $v \in H^{1-\beta,0}_{R,r}(0,1)$, ${}_xI_1^{\beta}v \in H^1(0,1) \hookrightarrow C[0,1]$. Thus, ${}_xI_1^{\beta}v|_{x=0}$ and ${}_xI_1^{\beta}v|_{x=1}$ can be bounded by $\|{}_xI^{\beta}_1 v\|_{H^1(0,1)}$ and so $\| v \|_{H^{1-\beta,0}_{R,r}(0,1)}$. Hence,
\[ \begin{aligned}
\big | l_C(v) \big | & = \big | \langle f, v \rangle + a_1~{}_xI_1^{\beta}v|_{x=1} - a_0~{}_xI_1^{\beta}v|_{x=0} \big | \\
& \leq C \big ( \|f\|_{(H^{1-\beta}_{R,r}(0,1))^\prime}+|a_0|+|a_1| \big ) \| v \|_{H^{1-\beta,0}_{R,r}}.
\end{aligned} \]
Lax--Milgram theorem shows that the auxiliary problem \eqref{Caputo:e3} has a unique solution $\xi^* \in H^{1-\beta,0}_{R,r}(0,1)$ with a stability estimate
\begin{equation*}
\|D{}^{}_xI^{\beta}_1 \xi^* \|_{L^2(0,1)}=\| \xi^* \|_{H^{1-\beta,0}_{R,r}(0,1)} \le C  \big ( \|f\|_{(H^{1-\beta,0}_{R,r}(0,1))^\prime}+|a_0|+|a_1| \big ).
\end{equation*}
Then $u^* := {}_xI_1^{\beta} \xi^* \in H^{1,0}(0,1)$ satisfies \eqref{Caputo:e1} for any $v \in H^{1-\beta,0}_{R,r}(0,1)$ and \eqref{Caputo:e3}. For any $v \in H^{1-\beta}_{R,r}(0,1)$ we use the condition \eqref{Caputo:e2} to conclude that
\begin{equation}\label{Caputo:e5}
v^0 := v - \f{\ds \int_0^1 {}_xI_1^{\beta}v(x) dx}{\Gamma(1-\beta)} (1-x)^{-\beta} \in H^{1-\beta,0}_{R,r}(0,1)
\end{equation}
satisfies
\begin{equation*}
A_C(u^*,v) = A_C(u^*,v^0), \quad l_C(v) = l_C(v^0).
\end{equation*}
Thus, $u^*$ satisfies \eqref{Caputo:e1}. Clearly, any $u = u^* + C$ is also a weak solution to \eqref{Caputo:e1}.

Conversely, for any weak solution $u \in H^1(0,1)$ to problem \eqref{Caputo:e1}, we deduce from Theorem \ref{thm:iso} that there exists a unique $\xi \in H^{1-\beta}_{R,r}(0,1)$ with ${}_xI_1^{\beta} \xi = u$ satisfies \eqref{Caputo:e4} for any $v \in H^{1-\beta}_{R,r}(0,1)$. If we relate $\xi^0$ and $\xi$ in the same way as $v^0$ and $v$ in \eqref{Caputo:e5}, we find that $\xi^0 \in H^{1-\beta,0}_{R,r}(0,1)$ satisfies \eqref{Caputo:e4}. The uniquess of the weak solution in $H^{1-\beta,0}_{R,r}(0,1)$ to \eqref{Caputo:e4} ensures that $\xi^0 = \xi^*$ with $u^* = {}_xI_1^{\beta} \xi^* \in H^{1,0}(0,1)$ as introduced above. Consequently, we use Lemma \ref{lem:Aux3} to obtain
\[ \begin{aligned}
u = {}_xI_1^{\beta} \xi = {}_xI_1^{\beta} \xi^* + \int_0^1 {}_xI_1^{\beta}\xi(x) dx = u^* + \int_0^1 u dx = u^* + C. 
\end{aligned} \]
We thus finish the proof of the theorem. 
\end{proof}

\begin{remark} 
We will show in \S \ref{CounterCaputo} that the Caputo equation \eqref{FDE:Caputo} with the Caputo boundary condition \eqref{FDE:Cbc} or the Riemann--Liouville boundary condition \eqref{FDE:Rbc} does not admit a solution, in general.
\end{remark}

\section{The conservative Caputo FDE}
\setcounter{section}{5}\setcounter{equation}{0}

We consider the conservative Caputo FDE \eqref{FDE:Conserv} with the Caputo boundary condtion \eqref{FDE:Cbc}. We multiply \eqref{FDE:Conserv} by any $v \in H^1(0,1)$, integrate the equation from 0 to 1, incorporate the boundary condition \eqref{FDE:Cbc}, and use Lemma \ref{lem:Aux1} to obtain a Galerkin weak formulation: find $u \in H^{1-\beta/2}_{C,l}(0,1)$ such that
\begin{equation}\label{ConCbc:e1}
B(u,v) = l(v), \quad \forall v \in H^{1-\beta/2}_{C,l}(0,1)
\end{equation}
where the bilinear form $B(\cdot,\cdot): H^{1-\beta/2}_{C,l}(0,1) \times H^{1-\beta/2}_{C,l}(0,1) \rightarrow \mathbb{R}$ and the linear functional $l(\cdot): H^{1-\beta/2}_{C,l}(0,1) \rightarrow \mathbb{R}$ are defined to be
\begin{equation*}\begin{array}{rl}
 B(w,v) &:= - \big ( {}^{C}_0D_x^{1-\beta/2}w, {}^{C}_xD_1^{1-\beta/2}v \big)_{L^2},\\[0.1in]
l(v) &:= \langle f, v \rangle + a_1 ~v(1) - a_0 ~v(0).
\end{array}\end{equation*}

\begin{theorem}\label{thm:ConCbc} Let $0 < \beta < 1$ and $f \in (H^{1-\beta/2}_{C,l}(0,1))^\prime$ satisfy the constraint:
\begin{equation}\label{ConCbc:e2}
\langle f, 1 \rangle + a_1 - a_0 = 0.
\end{equation}
Then \eqref{ConCbc:e1} has a unique weak solution $u^* \in H^{1-\beta/2,0}_{C,l}(0,1)$ with a stability estimate
\begin{equation}\label{ConCbc:e3}
\| u^* \|_{H^{1-\beta/2}_{R,l}} \le C \bigl (\|f\|_{(H^{1-\beta/2}_{C,l}(0,1))^\prime} + |a_0| + |a_1| \bigr).
\end{equation}
Any weak solution $u \in H^{1-\beta/2}_{C,l}(0,1)$ to \eqref{ConCbc:e1} is of the form $u = u^* + C$ and vice versa.
\end{theorem}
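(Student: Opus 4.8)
The plan is to mimic the argument used for Theorem~\ref{thm:Caputo}: establish the necessity of the compatibility condition \eqref{ConCbc:e2} by a well-chosen test function, then obtain existence and uniqueness on the quotient space $H^{1-\beta/2,0}_{C,l}(0,1)$ via the Lax--Milgram theorem, and finally lift back to characterize all weak solutions in $H^{1-\beta/2}_{C,l}(0,1)$.

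First I would verify that the constant function $1$ lies in $H^{1-\beta/2}_{C,l}(0,1)$: indeed ${}^{C}_0D_x^{1-\beta/2}1 = {}_0I_x^{\beta/2}D1 = 0 \in L^2(0,1)$ and $1 \in L^\kappa(0,1)$. Testing \eqref{ConCbc:e1} with $v \equiv 1$ gives $B(u,1) = -({}^{C}_0D_x^{1-\beta/2}u, {}^{C}_xD_1^{1-\beta/2}1)_{L^2} = 0$ while $l(1) = \langle f,1\rangle + a_1 - a_0$, so \eqref{ConCbc:e2} is necessary. Next, working on the subspace $H^{1-\beta/2,0}_{C,l}(0,1)$ of zero-mean functions, I would check the three Lax--Milgram hypotheses for $B(\cdot,\cdot)$. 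Boundedness follows from Cauchy--Schwarz in $L^2$ together with the fact that on this subspace $\|{}^{C}_0D_x^{1-\beta/2}w\|_{L^2}$ is (by Lemma~\ref{lem:CFred}) equivalent to $\|w\|_{H^{1-\beta/2}_{C,l}}$, and similarly $\|{}^{C}_xD_1^{1-\beta/2}v\|_{L^2}$ controls $\|v\|_{H^{1-\beta/2}_{C,r}}$; one must observe that for zero-mean $v$ one has the relation ${}^{C}_xD_1^{1-\beta/2}v = {}^{R}_xD_1^{1-\beta/2}v$ up to the singular term, so the two one-sided seminorms of the \emph{same} zero-mean function are comparable — this is the analogue of what makes $B$ make sense on a single space. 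Coercivity $B(v,v) \gtrsim \|v\|^2_{H^{1-\beta/2,0}_{C,l}}$ is the crucial point and I expect it to rest on a fractional integration-by-parts/skew-symmetry identity showing $-({}^{C}_0D_x^{1-\beta/2}v,{}^{C}_xD_1^{1-\beta/2}v)_{L^2}$ equals (a positive multiple of) $\|{}^{R}_0D_x^{1-\beta}v\|_{L^2}^2$ or an equivalent quantity, combined again with the fractional Friedrichs inequality of Lemma~\ref{lem:CFred}. Continuity of $l$ on $H^{1-\beta/2,0}_{C,l}(0,1)$ uses the embedding Theorem~\ref{thm:embed}, which for $0<\beta<1$ (hence $\beta/2 < 1/2$) gives $H^{1-\beta/2}_{C,l}(0,1) \hookrightarrow C^{(1-\beta)/2}[0,1]$, so the point evaluations $v(0)$, $v(1)$ are bounded functionals; together with $f \in (H^{1-\beta/2}_{C,l})'$ this yields $|l(v)| \le C(\|f\|_{(H^{1-\beta/2}_{C,l})'} + |a_0| + |a_1|)\|v\|_{H^{1-\beta/2,0}_{C,l}}$. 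Lax--Milgram then produces a unique $u^* \in H^{1-\beta/2,0}_{C,l}(0,1)$ with the stability bound \eqref{ConCbc:e3}.

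It remains to show $u^*$ solves \eqref{ConCbc:e1} against \emph{all} test functions, not just zero-mean ones, and to classify the solution set. For arbitrary $v \in H^{1-\beta/2}_{C,l}(0,1)$ I would split $v = v^0 + \big(\int_0^1 v\,dx\big)\cdot 1$ with $v^0 \in H^{1-\beta/2,0}_{C,l}(0,1)$; since $B(u^*, 1) = 0$ (the right Caputo derivative of a constant vanishes) and $l(1) = 0$ by the compatibility condition \eqref{ConCbc:e2}, both sides of \eqref{ConCbc:e1} are unchanged when $v$ is replaced by $v^0$, so $u^*$ is a genuine weak solution. Conversely, if $u \in H^{1-\beta/2}_{C,l}(0,1)$ solves \eqref{ConCbc:e1}, then $u - \big(\int_0^1 u\,dx\big)\cdot 1$ lies in the zero-mean subspace, and because ${}^{C}_0D_x^{1-\beta/2}$ annihilates constants, $B(\cdot,\cdot)$ and $l(\cdot)$ see only this zero-mean part, so by uniqueness it equals $u^*$; hence $u = u^* + C$ with $C = \int_0^1 u\,dx$, and every such $u$ is clearly a solution.

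The main obstacle will be the coercivity estimate for $B$ on $H^{1-\beta/2,0}_{C,l}(0,1)$: one needs a clean fractional integration-by-parts identity relating the mixed left-right Caputo inner product $-({}^{C}_0D_x^{1-\beta/2}v,{}^{C}_xD_1^{1-\beta/2}v)_{L^2}$ to a nonnegative quantity equivalent to the squared seminorm, which requires carefully tracking the boundary-type singular terms (using Lemma~\ref{lem:Aux3} to legitimize the manipulations for functions that need not vanish at the endpoints) and then invoking the Caputo fractional Friedrichs inequality of Lemma~\ref{lem:CFred} to upgrade the seminorm to the full norm on the zero-mean subspace. A secondary technical point is confirming that on the zero-mean subspace the left and right Caputo fractional derivative seminorms of a single function are equivalent, so that $B$ is well-defined and bounded as a form on $H^{1-\beta/2,0}_{C,l}(0,1) \times H^{1-\beta/2,0}_{C,l}(0,1)$ despite $v$ being paired through its \emph{right} Caputo derivative.
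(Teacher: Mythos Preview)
Your outline matches the paper's proof almost exactly: test with $v\equiv 1$ for necessity, Lax--Milgram on $H^{1-\beta/2,0}_{C,l}(0,1)$, then split a general test function as $v = v^0 + \int_0^1 v\,dx$ and use \eqref{ConCbc:e2}. The gap you correctly flag --- coercivity of $B$ and the comparability of the left and right Caputo seminorms needed for boundedness --- is closed in the paper by a single identity quoted from \cite{ErvRoo}: for $\phi \in L^2(0,1)$,
\[
\bigl({}_0I_x^{\beta/2}\phi,\; {}_xI_1^{\beta/2}\phi\bigr)_{L^2(0,1)}
= \cos\Bigl(\frac{\beta\pi}{2}\Bigr)\,\bigl\|{}_0I_x^{\beta/2}\phi\bigr\|^2_{L^2(0,1)}.
\]
Taking $\phi = Dw$ gives at once $B(w,w) = \cos(\beta\pi/2)\,\|{}^{C}_0D_x^{1-\beta/2}w\|^2_{L^2}$, so coercivity on the zero-mean subspace follows from Lemma~\ref{lem:CFred}; the symmetric version of the same identity yields $\|{}^{C}_xD_1^{1-\beta/2}w\|_{L^2} = \|{}^{C}_0D_x^{1-\beta/2}w\|_{L^2}$, which resolves the boundedness concern. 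No integration by parts, no boundary-singular bookkeeping, and no appeal to Lemma~\ref{lem:Aux3} are needed --- the identity holds for arbitrary $L^2$ data and comes from the Fourier characterization of fractional integrals. Your guess that the coercivity target might be $\|{}^{R}_0D_x^{1-\beta}v\|_{L^2}^2$ is off by a half-order: the correct quantity is the $(1-\beta/2)$-order Caputo seminorm.

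One minor difference in the ``conversely'' part: the paper does not project $u$ onto zero mean and invoke Lax--Milgram uniqueness. Instead it sets $w = u - u^*$, uses $B(w,w)=0$ to get ${}_0I_x^{\beta/2}Dw = 0$, and then applies ${}^{R}_0D_x^{\beta/2} = D\,{}_0I_x^{1-\beta/2}$ together with the semigroup property \eqref{Aux1:e2} to conclude $Dw = 0$, hence $w$ is constant. Your projection argument is equally valid.
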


\begin{proof} Theorem \ref{thm:embed} shows that $H^{1-\beta/2}_{C,l}(0,1) \hookrightarrow C[0,1]$ for $0 < \beta < 1$, so \eqref{ConCbc:e1} is well defined and $l(\cdot)$ is a bounded linear functional on the space $H^{1-\beta/2,0}_{C,l}(0,1)$. Taking $v\equiv 1 \in H^{1-\beta/2}_{C,l}(0,1)$ in \eqref{ConCbc:e1} concludes that \eqref{ConCbc:e2} is a necessary condition for the existence of a weak solution to \eqref{ConCbc:e1}. On the other hand, we get from \cite{ErvRoo} that
\[\begin{array}{rl}
B(w,w) & = \ds \big ( {}_0I_x^{\beta/2}Dw, {}_xI_1^{\beta/2} Dw \big)_{L^2(0,1)}
= \ds \cos\Bigl(\f{\beta \pi}2\Bigr) \bigl \|{}_0I_x^{\beta/2} Dw \bigr \|^2_{L^2(0,1)} \\[0.1in]
& = \ds \cos\Bigl(\f{\beta \pi}2\Bigr) \bigl \|{}^{C}_0D_x^{1-\beta/2}w \bigr \|^2_{L^2(0,1)},
\quad \forall\ w \in H^{1-\beta/2}_{C,l}(0,1).
\end{array}\]
It follows from Lemma \ref{lem:CFred} that the bilinear form $B(\cdot,\cdot)$ is coercive and bounded on  $H^{1-\beta/2,0}_{C,l}(0,1) \times H^{1-\beta/2,0}_{C,l}(0,1)$. We apply Lax-Milgram theorem to conclude that there exists a unique solution $u^* \in H^{1-\beta/2,0}_{C,l}(0,1)$ satisfies \eqref{ConCbc:e1} for any $v \in H^{1-\beta/2,0}_{C,l}(0,1)$ and the stability estimate \eqref{ConCbc:e3}. For any $v \in H^{1-\beta/2}_{C,l}(0,1)$, we use the condition \eqref{ConCbc:e2} to deduce that
\[ v^0 := v - \int_0^1 v dx \in H^{1-\beta/2,0}_{C,l}(0,1)\]
satisfies $B(u^*,v) = B(u^*,v^0)$ and $l(v) = l(v^0)$. Hence, $u^* \in H^{1-\beta/2,0}_{C,l}(0,1)$ is a solution to \eqref{ConCbc:e1}. Clearly, any $u = u^* + C \in  H^{1-\beta/2}_{C,l}(0,1)$ is a weak solution to \eqref{ConCbc:e1}.

Conversely, let $u \in  H^{1-\beta/2}_{C,l}(0,1)$ be any weak solution to \eqref{ConCbc:e1}. Then $w := u - u^* \in  H^{1-\beta/2}_{C,l}(0,1)$ satisfies
\[ B(w,w) = \ds \cos\Bigl(\f{\beta \pi}2\Bigr) \bigl \|{}^{C}_0D_x^{1-\beta/2}w \bigr \|^2_{L^2(0,1)} = 0. \]
Hence, ${}^{C}_0D_x^{1-\beta/2}w = {}_0I_x^{\beta/2} Dw= 0$. We apply ${}^{R}_0D_x^{\beta/2} = D {}_0I_x^{1-\beta/2}$ on both sides of the equation to get
\[ Dw = D {}_0I_x Dw = D {}_0I_x^{1-\beta/2} {}_0I_x^{\beta/2} Dw = {}^{R}_0D_x^{\beta/2} {}_0I_x^{\beta/2} D w= 0. \]
Therefore, $w$ is a constant and the theorem is proved.
\end{proof}

The next theorem characterizes the solution to the FDE in terms of the solution to the classical Dirichlet boundary-value problem of a second-order diffusion equation.

\begin{theorem}\label{thm:ConCbc2} Assume that the conditions of Theorem \ref{thm:ConCbc} hold and $f \in L^2(0,1)$. Then, any weak solution $u \in H^{1-\beta/2}_{C,l}(0,1)$ to \eqref{ConCbc:e1} can be expressed as
\begin{equation}\label{ConCbc2:e1}
u(x) = {}_0 I^{1-\beta}_x w(x) + u(0)
\end{equation}
where $w$ is the solution to the Dirichlet boundary-value problem of the second-order diffusion equation
\begin{equation}\label{ConCbc2:e2}
\left\{\begin{aligned}
& - D^2w = Df(x), \quad  x \in (0,1), \\
& w(0) = a_0, \ \  w(1) = a_1.
\end{aligned}\right.
\end{equation}
\end{theorem}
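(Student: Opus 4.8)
The plan is to strip the given weak solution down to a classical first-order problem and match it against $w$, using $f\in L^2(0,1)$ to recover the strong form of the equation. First note that, since the compatibility condition \eqref{ConCbc:e2} reads $\langle f,1\rangle=a_0-a_1$, integrating $-D^2w=Df$ once shows the integration constant vanishes, so \eqref{ConCbc2:e2} is equivalent to $Dw=-f$ together with $w(0)=a_0$ (and then $w(1)=a_1$ is automatic); in particular $w\in H^1(0,1)\cap C[0,1]$. Now let $u\in H^{1-\beta/2}_{C,l}(0,1)$ be an arbitrary weak solution of \eqref{ConCbc:e1}. Testing with $v\in C^\infty_0(0,1)$ (which lies in $H^{1-\beta/2}_{C,l}(0,1)$) and using the adjointness and semigroup properties \eqref{Aux1:e1}--\eqref{Aux1:e2},
$$B(u,v)=\big({}^{C}_0D_x^{1-\beta/2}u,\,{}_xI^{\beta/2}_1Dv\big)_{L^2}=\big({}_0I_x^{\beta/2}{}^{C}_0D_x^{1-\beta/2}u,\,Dv\big)_{L^2}=\big({}^{C}_0D_x^{1-\beta}u,\,Dv\big)_{L^2},$$
while $l(v)=\langle f,v\rangle=(f,v)_{L^2}$. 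Hence $\phi:={}^{C}_0D_x^{1-\beta}u={}_0I_x^{\beta/2}\big({}^{C}_0D_x^{1-\beta/2}u\big)$, which lies in $L^2(0,1)$ by Lemma \ref{lem:Aux0}, has distributional derivative $-f\in L^2(0,1)$; therefore $\phi\in H^1(0,1)$ with $D\phi=-f=Dw$.

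It remains to fix the integration constant, i.e.\ to prove $\phi(0)=a_0$. Choose $v_0\in C^\infty[0,1]$ with $v_0\equiv 1$ near $x=0$ and $v_0\equiv 0$ near $x=1$; then $Dv_0\in C^\infty_0(0,1)$, so $v_0\in H^{1-\beta/2}_{C,l}(0,1)$ is an admissible test function in \eqref{ConCbc:e1}. The same manipulation gives $B(u,v_0)=\big(\phi,Dv_0\big)_{L^2}$, and since $\phi\in H^1(0,1)\hookrightarrow C[0,1]$ and $v_0\in C^\infty[0,1]$ an ordinary integration by parts is legitimate:
$$\big(\phi,Dv_0\big)_{L^2}=\phi(1)v_0(1)-\phi(0)v_0(0)-(D\phi,v_0)_{L^2}=-\phi(0)+(f,v_0)_{L^2}.$$
Comparing with $l(v_0)=(f,v_0)_{L^2}+a_1v_0(1)-a_0v_0(0)=(f,v_0)_{L^2}-a_0$ yields $\phi(0)=a_0$. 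Thus $\phi$ and $w$ both solve the first-order problem $Dg=-f$, $g(0)=a_0$, so $\phi=w$.

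Finally, by Theorem \ref{thm:embed} (applied with $\beta$ replaced by $\beta/2\in(0,1/2)$) $u\in C[0,1]$, and since $Du\in L^1(0,1)$ for $u\in H^{1-\beta/2}_{C,l}(0,1)$, $u$ is absolutely continuous; using once more the semigroup property \eqref{Aux1:e2},
$$u(x)-u(0)={}_0I_x^1Du={}_0I_x^{1-\beta}\big({}_0I_x^{\beta}Du\big)={}_0I_x^{1-\beta}\big({}^{C}_0D_x^{1-\beta}u\big)={}_0I_x^{1-\beta}w(x),$$
which is precisely \eqref{ConCbc2:e1}; that this is consistent with the description $u=u^*+C$ of all weak solutions in Theorem \ref{thm:ConCbc} then serves as a check. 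The step I expect to be the main obstacle is the boundary-data matching in the second paragraph: one must recognize that the flux governing \eqref{ConCbc:e1} is ${}^{C}_0D_x^{1-\beta}u$ and extract its endpoint value with a test function that is smooth up to the boundary, so that the endpoint terms make sense and the integration by parts is elementary. A secondary, purely technical point underlying the reconstruction is the global integrability $Du\in L^1(0,1)$ for $u\in H^{1-\beta/2}_{C,l}(0,1)$, which is where the restriction $0<\beta/2<1/2$ enters.
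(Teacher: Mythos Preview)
Your argument is correct and follows the same strategy as the paper: set $w={}^C_0D_x^{1-\beta}u$, verify that $w$ solves the Dirichlet problem \eqref{ConCbc2:e2}, and then invert via ${}_0I_x^{1-\beta}$ using the semigroup identity. The paper's proof is extremely terse---it simply declares that $w$ is a weak solution of \eqref{ConCbc2:e2} and cites \eqref{Aux2:e2} for the inversion---whereas you supply the missing work: you recover $D\phi=-f$ by testing against $C_0^\infty$, and you recover the boundary value $\phi(0)=a_0$ with the cut-off test function $v_0$, then use the compatibility condition \eqref{ConCbc:e2} to get $\phi(1)=a_1$ for free. That boundary-matching step is exactly the part the paper skips, so your version is in fact more complete on this point. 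The residual technicality you flag (that $Du\in L^1$, equivalently that ${}_0I_x^{1-\beta}\,{}^C_0D_x^{1-\beta}(u-u(0))=u-u(0)$) is the same one the paper sweeps under the rug by invoking \eqref{Aux2:e2}; neither proof spells it out, so you are at parity with the source there.
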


\begin{proof} We note that 
\begin{equation}\label{ConCbc2:e3}
 w := {}^{C}_0D_x^{1-\beta}u = {}^{C}_0D_x^{1-\beta} \big ( u - u(0) \big)
\end{equation}
is a weak solution to \eqref{ConCbc2:e2}. It is well-known that for $f \in L^2(0,1)$ the Dirichlet boundary-value problem \eqref{ConCbc2:e2} has a unique solution in $H^1(0,1)$. We apply ${}_0 I^{1-\beta}_x$ on both sides of \eqref{ConCbc:e1} and use \eqref{Aux2:e2} to get
\[
u-u(0) = {}_0 I^{1-\beta}_x {}^{C}_0D_x^{1-\beta} \big (u - u(0) \big ) = {}_0I_x^{1-\beta}w.
\]
\end{proof}

\section{The Riemann--Liouville FDE}
\setcounter{section}{6}\setcounter{equation}{0}

We consider the Riemann-Liouville FDE \eqref{FDE:RL} with the Riemann--Liouville boundary condition \eqref{FDE:Rbc}. We multiply \eqref{FDE:RL} by ${}_0I_x^{\beta}v$ for any $v \in H^{1-\beta}_{R,l}(0,1)$, integrate the resulting equation from 0 to 1, and incorporate the boundary condition \eqref{FDE:Rbc} to obtain a Galerkin weak formulation: find $u \in H^{1-\beta}_{R,l}(0,1)$ such that
\begin{equation}\label{RR:e1}
A_{R,l}(u,v) = l_{R,l}(v), \quad \forall\ v \in H^{1-\beta}_{R,l}(0,1)
\end{equation}
where the bilinear form $A_{R,l}(\cdot,\cdot): H^{1-\beta}_{R,l}(0,1) \times H^{1-\beta}_{R,l}(0,1) \longrightarrow \mathbb{R}$ and the right-hand side $l_{R,l}(\cdot): H^{1-\beta}_{R,l}(0,1) \longrightarrow \mathbb{R}$ are defined to be
\begin{equation*}\begin{array}{rl}
A_{R,l}(w,v) & := \bigl ({}^{R}_0D_x^{1-\beta}w,{}^{R}_0D_x^{1-\beta}v \big )_{L^2}, \\[0.1in]
l_{R,l}(v) & := \bigl ( f,{}_0I_x^{\beta}v \big )_{L^2} + a_1~{}_0I_x^{\beta}v|_{x=1} - a_0~{}_0I_x^{\beta}v|_{x=0}.
\end{array}\end{equation*}

\begin{theorem}\label{thm:RR} Let $0 < \beta < 1$ and $f \in (H^1(0,1))^\prime$ satisfy the constraint \eqref{ConCbc:e2}. Then the Galerkin formulation \eqref{RR:e1} has a unique solution $u^* \in H^{1-\beta,0}_{R,l}(0,1)$ with a stability estimate
\begin{equation}\label{RR:e2}
\| u^* \|_{H^{1-\beta}_{R,l}} \le C \bigl (\|f\|_{(H^1(0,1))^\prime} + |a_0| + |a_1| \bigr).
\end{equation}
Moreover, any solution $u \in H^{1-\beta}_{R,l}(0,1)$ to problem \eqref{FDE:RL} and \eqref{FDE:Rbc} can be expressed as $u = u^* + Cx^{-\beta}$ for any $C \in \mathbb{R}$ and vice versa. 
\end{theorem}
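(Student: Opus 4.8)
The plan is to mirror the structure of the proof of Theorem~\ref{thm:Caputo}, since the Riemann--Liouville boundary-value problem is, in a precise sense, the ``left-sided'' counterpart of the Caputo problem treated there. First I would verify that \eqref{ConCbc:e2} is necessary: the constant function $1$ lies in $H^{1-\beta}_{R,l}(0,1)$ only after renormalization, but the natural test object here is $x^{-\beta}$. Indeed, by \eqref{FDS:e5} we have ${}^{R}_0D_x^{1-\beta}x^{-\beta}=0$, so $x^{-\beta}\in H^{1-\beta,0}_{R,l}(0,1)^{\perp}$ in the energy sense; substituting $v$ with ${}_0I_x^{\beta}v\equiv\text{const}$ into \eqref{RR:e1} (equivalently, testing with the element whose $\beta$-integral is constant) makes the left side vanish and forces $\langle f,{}_0I_x^{\beta}v\rangle+a_1-a_0=0$, which upon evaluating ${}_0I_x^\beta$ of that element reduces to \eqref{ConCbc:e2}. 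I would record this carefully using \eqref{FDE:e0} with $\alpha=\beta$, $\mu=0$.

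Next, for existence I would restrict to the subspace $H^{1-\beta,0}_{R,l}(0,1)$ and apply Lax--Milgram to the bilinear form $A_{R,l}$. Boundedness is immediate from Cauchy--Schwarz, and coercivity on $H^{1-\beta,0}_{R,l}(0,1)\times H^{1-\beta,0}_{R,l}(0,1)$ is exactly Corollary~\ref{cor:RFred}, since $\|v\|_{H^{1-\beta,0}_{R,l}}=|v|_{H^{1-\beta}_{R,l}(0,1)}$ is a genuine norm there and $A_{R,l}(v,v)=\|{}^{R}_0D_x^{1-\beta}v\|_{L^2}^2=\|v\|_{H^{1-\beta,0}_{R,l}}^2$. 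For the boundedness of the functional $l_{R,l}$, Theorem~\ref{thm:iso} gives ${}_0I_x^\beta v\in H^1(0,1)\hookrightarrow C[0,1]$ for $v\in H^{1-\beta,0}_{R,l}(0,1)$, so the boundary terms ${}_0I_x^\beta v|_{x=0}$ and ${}_0I_x^\beta v|_{x=1}$ are controlled by $\|{}_0I_x^\beta v\|_{H^1}\le C\|v\|_{H^{1-\beta,0}_{R,l}}$, and the term $(f,{}_0I_x^\beta v)_{L^2}$ is bounded by $\|f\|_{(H^1(0,1))'}\,\|{}_0I_x^\beta v\|_{H^1}$. This yields the solution $u^*\in H^{1-\beta,0}_{R,l}(0,1)$ with the stability bound \eqref{RR:e2}. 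To upgrade to test functions ranging over all of $H^{1-\beta}_{R,l}(0,1)$, I would split an arbitrary $v$ as $v=v^0+c\,x^{-\beta}$ with $v^0\in H^{1-\beta,0}_{R,l}(0,1)$, where $c=\big(\int_0^1 {}_0I_x^\beta v\,dx\big)/\Gamma(1-\beta)$ (this is the analog of \eqref{Caputo:e5}, using ${}_0I_x^\beta x^{-\beta}=\Gamma(1-\beta)$ from \eqref{FDS:e5}); then $A_{R,l}(u^*,v)=A_{R,l}(u^*,v^0)$ since the energy form annihilates $x^{-\beta}$, and $l_{R,l}(v)=l_{R,l}(v^0)$ by the compatibility condition \eqref{ConCbc:e2}.

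Finally, for the characterization of all solutions: given any $u\in H^{1-\beta}_{R,l}(0,1)$ solving \eqref{RR:e1}, write $u={}^{R}_0D_x^\beta\tilde w-\tfrac{\tilde w(0)}{\Gamma(1-\beta)}x^{-\beta}$ via Corollary~\ref{cor:Space_Char} with $\tilde w\in H^1(0,1)$, or equivalently set $\tilde w={}_0I_x^\beta u\in H^1(0,1)$ by Theorem~\ref{thm:iso}. Subtracting the appropriate multiple of $x^{-\beta}$ puts $u$ into the zero-mean subspace where the solution is unique, so $u-u^*$ must be a scalar multiple of $x^{-\beta}$; conversely $x^{-\beta}$ solves the homogeneous equation \eqref{FDE:RL} (by \eqref{FDS:e5}) and contributes nothing to either boundary condition \eqref{FDE:Rbc}, so $u^*+Cx^{-\beta}$ is a solution for every $C$. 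I expect the main obstacle to be the bookkeeping of which ambient space the weak formulation lives in, and in particular making rigorous the passage between ``tested against $v$'' and ``tested against ${}_0I_x^\beta v$'': one must be careful that $l_{R,l}$ as written requires only $f\in(H^1(0,1))'$ (acting through ${}_0I_x^\beta v\in H^1$), whereas restricted to the zero-mean subspace it is bounded by the weaker $\|f\|_{(H^{1-\beta,0}_{R,l})'}$-type quantity; keeping the two estimates consistent with the stated hypothesis $f\in(H^1(0,1))'$ and the stated bound \eqref{RR:e2} is the delicate point, but it follows from Theorem~\ref{thm:iso} exactly as in the proof of Theorem~\ref{thm:Caputo}.
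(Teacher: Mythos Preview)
Your proposal is correct and follows essentially the same route as the paper: necessity by testing with $x^{-\beta}$, Lax--Milgram on $H^{1-\beta,0}_{R,l}(0,1)$ using Corollary~\ref{cor:RFred} for coercivity and Theorem~\ref{thm:iso} for boundedness of $l_{R,l}$, and the splitting $v=v^0+c\,x^{-\beta}$ to pass from the subspace to all of $H^{1-\beta}_{R,l}(0,1)$. The only stylistic difference is in the characterization step: the paper sets $w=u-u^*$, tests the homogeneous equation against $w$ itself to obtain ${}_0I_x^\beta w=\text{const}$, and then applies ${}^{R}_0D_x^\beta$ explicitly to conclude $w=Cx^{-\beta}$, whereas you project $u$ into the zero-mean subspace and invoke uniqueness there; both arguments are equivalent and equally short.
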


\begin{proof} By \eqref{FDS:e5}, $x^{-\beta} \in H^{1-\beta}_{R,l}(0,1)$. Taking $v=x^{-\beta}$ in \eqref{RR:e1} concludes that \eqref{ConCbc:e2} is a necessary condition for the existence of a weak solution. We use Lemma \ref{lem:Fred} and Theorem \ref{thm:iso} to bound $\big \|{}_0I^{\beta}_x v \big \|_{H^1(0,1)}$ by $ \big \|{}^{R}_0D^{1-\beta}_x v \big \|_{L^2(0,1)}$ to obtain the estimate
\[
\begin{aligned}
\big | l_{R,l}(v) \big | = & \big | \langle f, {}_0I^{\beta}_x v \rangle + a_1~{}_0I_x^{\beta}v|_{x=1}
    - a_0~{}_0I_x^{\beta}v|_{x=0} \big | \\
& \leq C \big (\|f\|_{(H^1(0,1))'}+|a_0|+|a_1| \big ) \big \|{}_0I^{\beta}_x v \big \|_{H^1(0,1)} \\
& \leq C \big (\|f\|_{(H^1(0,1))'}+|a_0|+|a_1| \big ) \big \|{}^{R}_0D^{1-\beta}_x v \big \|_{L^2(0,1)} \\
& \leq C \big (\|f\|_{(H^1(0,1))'}+|a_0|+|a_1| \big ) \big \| v \big \|_{H^{1-\beta}_{R,l}(0,1)}.
\end{aligned}
\]
Thus, $l_{R,l}(\cdot)$ defines a bounded linear functional on $H^{1-\beta,0}_{R,l}(0,1)$. The coercivity and boundedness of $A_{R,l}(\cdot,\cdot)$ on $H^{1-\beta,0}_{R,l}(0,1) \times H^{1-\beta,0}_{R,l}(0,1)$ follows from $A_{R,l}(v,v) = \| v \|_{H^{1-\beta,0}_{R,l}(0,1)}^2$ and the fact $\| \cdot \|_{H^{1-\beta,0}_{R,l}(0,1)}$ is a norm on $H^{1-\beta,0}_{R,l}(0,1)$.
We apply Lax-Milgram theorem to conclude that there is a unique $u^* \in H^{1-\beta,0}_{R,l}(0,1)$ which satisfies \eqref{RR:e1} for any $v^0 \in H^{1-\beta,0}_{R,l}(0,1)$ and the stability estimate \eqref{RR:e2}. For any $v \in H^{1-\beta}_{R,l}(0,1)$, we use \eqref{ConCbc:e2} to conclude that 
\begin{equation*}
v^0 := v - \f{\ds \int_0^1 {}_0I_x^{\beta}v(x) dx}{\Gamma(1-\beta)} x^{-\beta} \in H^{1-\beta,0}_{R,l}(0,1)
\end{equation*}
satisfies $A_{R,l}(u^*,v) = A_{R,l}(u^*,v^0)$ and $l_{R,l}(v) = l_{R,l}(v^0)$. We thus prove that $u^*$ is a weak solution to \eqref{RR:e1}.

Let $u \in H^{1-\beta}_{R,l}(0,1)$ be any weak solution to \eqref{RR:e1}. Then the difference $w := u - u^*$ satisfies the weak formulation
\begin{equation}\label{RR:e3}
A_{R,l}(w,v) = 0, \quad \forall v \in H^{1-\beta}_{R,l}(0,1).
\end{equation}
Choosing $v = w$ yields $\big \| {}^{R}_0D_x^{1-\beta}w \big \|_{L^2(0,1)} = \big \| D~{}_0I_x^{\beta}w \big \|_{L^2(0,1)} = 0$. That is, ${}_0I^{\beta}_x w = C_0$. We apply ${}^{R}_0D^{\beta}_x = D {}^{}_0I^{1-\beta}_x$ to both sides of the equation to obtain
\begin{equation*}
w(x) = D {}^{}_0I_x w = D {}_0I^{1-\beta}_x {}_0I^{\beta}_x w = D ~{}_0I^{1-\beta}_x C_0 = \f{C_0 x^{-\beta}}{\Gamma(1-\beta)} = C x^{-\beta}.
\end{equation*}
Conversely, $u = u^* + Cx^{-\beta}$ satisfies \eqref{RR:e1}. 
\end{proof}

The next theorem characterizes the solution to the FDE in terms of the solutions to the Neumann boundary-value problem of a second-order diffusion equation.

\begin{theorem}\label{thm:RR_Char} Assume that the conditions of Theorem \ref{thm:RR} hold. Then any solution to problem \eqref{FDE:RL} and \eqref{FDE:Rbc} can be expressed as
\begin{equation}\label{RR_Char:e1}
u = {}^R_0 D^{\beta}_x \bigl( w - w(0) \bigr) + C x^{-\beta},
\end{equation}
where $C \in \mathbb{R}$ is an arbitrary constant and $w$ is any solution of the classical Neumann-boundary value problem of a second-order diffusion equation
\begin{equation}\label{RR_Char:e2}
\left\{\begin{aligned}
& - D^2 w = f(x), \quad  x \in (0,1), \\
& Dw(0) =a_0, \quad Dw(1) = a_1.
\end{aligned}
\right. \end{equation}
Conversely, \eqref{RR_Char:e1} defines the general solution to \eqref{FDE:RL} and \eqref{FDE:Rbc}.
\end{theorem}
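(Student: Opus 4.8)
The plan is to transport the weak formulation \eqref{RR:e1} of problem \eqref{FDE:RL}--\eqref{FDE:Rbc} onto the weak formulation of the classical Neumann problem \eqref{RR_Char:e2} by means of the isomorphism ${}_0I_x^{\beta}\colon H^{1-\beta}_{R,l}(0,1)\to H^1(0,1)$ from Theorem \ref{thm:iso}. Here a ``solution'' of \eqref{FDE:RL}--\eqref{FDE:Rbc} means a solution of \eqref{RR:e1}, and a solution of \eqref{RR_Char:e2} means $w\in H^1(0,1)$ with $(Dw,D\psi)_{L^2(0,1)}=\langle f,\psi\rangle+a_1\psi(1)-a_0\psi(0)$ for all $\psi\in H^1(0,1)$. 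Testing this against $\psi\equiv1$ reproduces exactly the constraint \eqref{ConCbc:e2} assumed in Theorem \ref{thm:RR}, so \eqref{RR_Char:e2} is solvable with $w$ unique up to an additive constant. The key observation is that for $v\in H^{1-\beta}_{R,l}(0,1)$ one has ${}^R_0D_x^{1-\beta}v=D\,{}_0I_x^{\beta}v$ by the definition of the Riemann--Liouville derivative, while, by Theorem \ref{thm:iso}, the function $\psi:={}_0I_x^{\beta}v$ ranges over all of $H^1(0,1)\hookrightarrow C[0,1]$ as $v$ ranges over $H^{1-\beta}_{R,l}(0,1)$, with ${}_0I_x^{\beta}v|_{x=0}=\psi(0)$ and ${}_0I_x^{\beta}v|_{x=1}=\psi(1)$.

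First I would prove that any solution $u\in H^{1-\beta}_{R,l}(0,1)$ of \eqref{RR:e1} has the asserted form. Put $w:={}_0I_x^{\beta}u\in H^1(0,1)$; then ${}^R_0D_x^{1-\beta}u=Dw$, and by the substitution above the identity $A_{R,l}(u,v)=l_{R,l}(v)$ reads $(Dw,D\psi)_{L^2(0,1)}=\langle f,\psi\rangle+a_1\psi(1)-a_0\psi(0)$ for every $\psi\in H^1(0,1)$, i.e.\ $w$ solves \eqref{RR_Char:e2}. Applying ${}^R_0D_x^{\beta}$ to $w={}_0I_x^{\beta}u$ and using \eqref{Aux2:e1} (so ${}^R_0D_x^{\beta}{}_0I_x^{\beta}u=u$) together with ${}^R_0D_x^{\beta}1=x^{-\beta}/\Gamma(1-\beta)$ yields $u={}^R_0D_x^{\beta}\bigl(w-w(0)\bigr)+\frac{w(0)}{\Gamma(1-\beta)}x^{-\beta}$, which is \eqref{RR_Char:e1}; replacing $w$ by $w+k$ does not change $w-w(0)$, so the representation is independent of the chosen Neumann solution.

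Conversely, given any solution $w$ of \eqref{RR_Char:e2} and any $C\in\mathbb{R}$, set $u:={}^R_0D_x^{\beta}(w-w(0))+Cx^{-\beta}$. Writing $w-w(0)={}_0I_x^{1}(Dw)$ and using the semigroup property \eqref{Aux1:e2} gives ${}^R_0D_x^{\beta}(w-w(0))={}_0I_x^{1-\beta}(Dw)\in L^2(0,1)$ with ${}^R_0D_x^{1-\beta}\bigl({}_0I_x^{1-\beta}Dw\bigr)=Dw\in L^2(0,1)$, so ${}^R_0D_x^{\beta}(w-w(0))\in H^{1-\beta}_{R,l}(0,1)$; together with $x^{-\beta}\in H^{1-\beta}_{R,l}(0,1)$ from \eqref{FDS:e5} this gives $u\in H^{1-\beta}_{R,l}(0,1)$. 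Then ${}_0I_x^{\beta}u=(w-w(0))+C\Gamma(1-\beta)$, hence ${}^R_0D_x^{1-\beta}u=Dw$, and for every $v\in H^{1-\beta}_{R,l}(0,1)$, writing $\psi={}_0I_x^{\beta}v$, one gets $A_{R,l}(u,v)=(Dw,D\psi)_{L^2(0,1)}=l_{R,l}(v)$ since $w$ solves \eqref{RR_Char:e2}. Thus $u$ solves \eqref{RR:e1}, so \eqref{RR_Char:e1} is indeed the general solution. Alternatively, this converse can be obtained without further computation by combining Theorem \ref{thm:RR} (the solution set of \eqref{RR:e1} is $\{u^*+Cx^{-\beta}:C\in\mathbb{R}\}$) with the first part applied to $u^*$.

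The only genuinely delicate point is the use of the identities ${}^R_0D_x^{\beta}(w-w(0))={}_0I_x^{1-\beta}Dw$ and ${}_0I_x^{\beta}\,{}^R_0D_x^{\beta}(w-w(0))=w-w(0)$ for $w$ merely in $H^1(0,1)$ rather than in $C^1[0,1]$, since Lemma \ref{lem:Aux2} is stated for $C^1$ functions. I would handle this by writing $w-w(0)={}_0I_x^{1}(Dw)$ with $Dw\in L^2(0,1)$ and invoking the semigroup property \eqref{Aux1:e2} and the elementary fact $D\,{}_0I_x^{1}h=h$ a.e.\ for $h\in L^1(0,1)$; no estimate or density argument beyond Lemma \ref{lem:Aux1} is needed, and everything else is a mechanical rewriting through the isomorphism of Theorem \ref{thm:iso}.
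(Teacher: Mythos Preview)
Your proposal is correct and follows essentially the same approach as the paper: both arguments set $w:={}_0I_x^{\beta}u$ and use the isomorphism of Theorem~\ref{thm:iso} to identify the weak formulation \eqref{RR:e1} with the classical Neumann weak formulation \eqref{RR_Char:e4}, then invert via ${}^R_0D_x^{\beta}$ to obtain \eqref{RR_Char:e1}. Your treatment is in fact more explicit than the paper's, which works only with the distinguished solution $u^*$ from Theorem~\ref{thm:RR} and leaves the converse direction as a one-line assertion; your direct verification of the converse and your remark on extending the $C^1$ identities of Lemma~\ref{lem:Aux2} to $H^1$ via ${}_0I_x^{1}(Dw)$ fill in details the paper omits.
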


\begin{proof} Let $u^* \in H^{1-\beta,0}_{R,l}(0,1)$ be the unique solution to \eqref{RR:e1} as defined in Theorem \ref{thm:RR}. We introduce the auxiliary function $w_f$ as
\begin{equation}\label{RR_Char:e3}
w_f := {}_0 I^{\beta}_x u^*.
\end{equation}
Then $w_f \in H^1(0,1)$ is a weak solution to the weak formulation
\begin{equation}\label{RR_Char:e4}
\big(Dw_f, Dv \big)_{L^2(0,1)} = \langle f,v\rangle + a_1 v(1) - a_0 v(0), \quad \forall v \in H^1(0,1).
\end{equation}
Furthermore, $w_f$ satisfies the constraint
\begin{equation*} \int_0^1 w_f dx = \int_0^1 {}_0 I^{\beta}_x u^*(x) dx = 0. \end{equation*}
Lemma \ref{lem:Fred} shows that the bilinear form on the left-hand side of \eqref{RR_Char:e4} is coercive on $H^{1,0}(0,1) \times H^{1,0}(0,1)$ and so has a unique solution $w_f \in H^{1,0}(0,1)$. Any solution $w \in H^1(0,1)$ to \eqref{RR_Char:e4} can be expressed as $w = w_f + C$. Hence,  $w(x) - w(0) = w_f(x) - w_f(0)$ is uniquely determined for any weak solution $w$ to \eqref{RR_Char:e4}.
Furthermore, the function $u$ defined by \eqref{RR_Char:e1} is the general solution to problem \eqref{FDE:RL} and \eqref{FDE:Rbc}.
\end{proof}

\section{Two other reducible cases}
\setcounter{section}{7}\setcounter{equation}{0}

We study the Riemann--Liouville equation \eqref{FDE:RL} with the Caputo boundary condition \eqref{FDE:Cbc} and the conservative Caputo equation \eqref{FDE:Conserv} with the Riemann--Liouville boundary condition \eqref{FDE:Rbc}.

\begin{theorem}\label{thm:R_Cbc} Let $0 < \beta < 1$ and $f \in L^2(0,1)$ satisfy the constraint \eqref{ConCbc:e2}. Then the Riemann--Liouville equation \eqref{FDE:RL} with the Caputo boundary condition \eqref{FDE:Cbc} has a unique weak solution $u \in H^{1-\beta/2}_{C,l}(0,1) \cap H^{1-\beta/2}_{R,l}(0,1)$ which satisfies $u(0) = 0$. 
\end{theorem}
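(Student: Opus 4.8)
The plan is to reduce the Riemann--Liouville equation \eqref{FDE:RL} with the Caputo boundary condition \eqref{FDE:Cbc} to a boundary-value problem already solved in the paper, namely the conservative Caputo FDE \eqref{FDE:Conserv} with the Caputo boundary condition \eqref{FDE:Cbc} treated in Theorem \ref{thm:ConCbc} (and its characterization in Theorem \ref{thm:ConCbc2}). The key structural observation is that ${}^R_0D_x^{2-\beta} = D\,{}^R_0D_x^{1-\beta} = D^2\,{}_0I_x^{\beta}$, whereas $-D({}^C_0D_x^{1-\beta}w) = -D\,{}_0I_x^{\beta}Dw$. So if $u$ solves \eqref{FDE:RL} with Caputo boundary data, then setting $w := {}_0I_x^{\beta}u$ one expects $-D^2 w = f$ on $(0,1)$, and the Caputo boundary condition ${}^C_0D_x^{1-\beta}u|_{x=0}$, ${}^C_xD_1^{1-\beta}u|_{x=1}$ should translate (via Lemma \ref{lem:Aux2}/\ref{lem:Aux3} and the isomorphism Theorem \ref{thm:iso}) into Dirichlet data for a related function. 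First I would make precise the weak formulation: multiply \eqref{FDE:RL} by ${}_0I_x^{\beta}v$ and integrate, as was done to obtain \eqref{RR:e1}, but now pairing against the boundary condition \eqref{FDE:Cbc} rather than \eqref{FDE:Rbc}, obtaining a Petrov--Galerkin formulation on $H^{1-\beta/2}_{C,l}(0,1)$.

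The core argument I would run is the following. Write $u = {}^C_0D_x^{1-\beta}(w - w(0))$ following the template of \eqref{ConCbc2:e3} in Theorem \ref{thm:ConCbc2}, where $w$ solves the Dirichlet problem \eqref{ConCbc2:e2}, i.e. $-D^2 w = Df$ with $w(0)=a_0$, $w(1)=a_1$; by the standard theory this $w$ is unique in $H^1(0,1)$ for $f\in L^2(0,1)$. Then by the argument in the proof of Theorem \ref{thm:ConCbc2}, $u - u(0) = {}_0I_x^{1-\beta}w$, and the normalization $u(0)=0$ picks out a unique representative. I would then verify directly that this $u$ lies in $H^{1-\beta/2}_{C,l}(0,1)\cap H^{1-\beta/2}_{R,l}(0,1)$: membership in the Caputo space follows because ${}^C_0D_x^{1-\beta/2}u = {}_0I_x^{\beta/2}Du = {}_0I_x^{\beta/2}\,{}^C_0D_x^{1-\beta}w'\dots$ — more carefully, $Du = D\,{}_0I_x^{1-\beta}w = {}^R_0D_x^{\beta}w$, and applying ${}_0I_x^{\beta/2}$ and using the semigroup property \eqref{Aux1:e2} together with $w\in H^1$ gives an $L^2$ function; membership in the Riemann--Liouville space is similar using ${}^R_0D_x^{1-\beta/2}u = D\,{}_0I_x^{\beta/2}u$ and the mapping properties of Lemma \ref{lem:Aux0}. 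The boundary conditions must then be checked: the constraint \eqref{ConCbc:e2} is exactly the compatibility condition that makes $-D^2w = Df$ with the prescribed Dirichlet data consistent with the original $f$ (integrating $-D^2w=Df$ gives $-Dw(1)+Dw(0) = f(1)-f(0)$ only formally, so more honestly one uses that $w$ built from \eqref{ConCbc2:e2} satisfies $-Dw = {}_0I_x f + c$ and the two endpoint Dirichlet values plus \eqref{ConCbc:e2} fix everything), and then ${}^C_0D_x^{1-\beta}u|_{x=0} = w(0) = a_0$ by \eqref{ConCbc2:e3}, while the right Caputo derivative at $x=1$ must be matched using the symmetric identities and an integration by parts.

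For uniqueness, I would take the difference $z$ of two solutions in $H^{1-\beta/2}_{C,l}\cap H^{1-\beta/2}_{R,l}$ with $z(0)=0$; it satisfies the homogeneous equation ${}^R_0D_x^{2-\beta}z = 0$ with homogeneous Caputo boundary data. From ${}^R_0D_x^{2-\beta}z = D^2\,{}_0I_x^{\beta}z = 0$ we get ${}_0I_x^{\beta}z = c_1 x + c_0$; the condition $z(0)=0$ combined with the embedding of $H^{1-\beta/2}_{C,l}$ into $C[0,1]$ (Theorem \ref{thm:embed}) and $z\in H^{1-\beta/2}_{R,l}$ forces $c_0 = 0$ (the $x^{-\beta}$ mode, which would otherwise be allowed in the Riemann--Liouville space, is excluded precisely because $z$ is also in the Caputo space and is continuous at $0$ with value $0$), and the homogeneous Caputo boundary condition at $x=1$ kills $c_1$. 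Hence $z\equiv 0$. The main obstacle I anticipate is this last point — cleanly exploiting the intersection $H^{1-\beta/2}_{C,l}\cap H^{1-\beta/2}_{R,l}$ together with $u(0)=0$ to eliminate the singular homogeneous solution $x^{-\beta}$ that plagued Theorem \ref{thm:RR}; it is exactly the Caputo-space membership (hence continuity via Theorem \ref{thm:embed}, and applicability of the extended identities in Lemma \ref{lem:Aux3}) that rescues wellposedness here, and making that rigorous — rather than the routine reduction to $-D^2w = Df$ — is where the real work lies. A secondary technical point is justifying all the fractional integration-by-parts manipulations for functions in these nonconventional spaces rather than smooth functions, for which I would approximate by the dense subset $C^\infty(0,1)\cap H^{1-\beta/2}_{C,l}\cap C[0,1]$ as in the proofs of Theorem \ref{thm:embed} and Lemma \ref{lem:Aux3}.
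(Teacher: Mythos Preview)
Your proposal is correct and follows essentially the same route as the paper: reduce to the conservative Caputo problem of Theorem~\ref{thm:ConCbc}, pick the representative with $\tilde u(0)=0$, and for uniqueness solve the homogeneous equation explicitly to get $z=C_0x^{1-\beta}/\Gamma(2-\beta)+C_1x^{-\beta}/\Gamma(1-\beta)$, killing $C_1$ via Caputo-space membership and $C_0$ via the homogeneous boundary data. The only difference is that for existence the paper skips the detour through Theorem~\ref{thm:ConCbc2} and your hands-on verification of the two space memberships, invoking instead the single identity ${}^{R}_0D_x^{\mu}u = u(0)\,x^{-\mu}/\Gamma(1-\mu) + {}^{C}_0D_x^{\mu}u$ (with $\mu=1-\beta/2$ and $\mu=1-\beta$; cf.\ Lemma~\ref{lem:Aux3}), so that $\tilde u(0)=0$ simultaneously yields $\tilde u\in H^{1-\beta/2}_{R,l}\cap H^{1-\beta/2}_{C,l}$ and the coincidence of the Riemann--Liouville and conservative Caputo equations in one line.
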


\begin{proof}  Theorem \ref{thm:ConCbc} concludes that the conservative Caputo equation \eqref{FDE:Conserv} with the Caputo boundary condition \eqref{FDE:Cbc} has a unique solution $u \in H^{1-\beta/2}_{C,l}(0,1)$ up to an arbitrary constant. By Theorem \ref{thm:embed}, $H^{1-\beta/2}_{C,l}(0,1) \hookrightarrow C[0,1]$, so $u(0)$ is well defined. Motivated by the identity \cite{Pod}
\begin{equation}\label{R_Cbc:e1}
{}^{R}_0D_x^{1-\beta/2}u(x) = \f{u(0)}{\Gamma(\beta)x^{1-\beta/2}} + ~{}^{C}_0D_x^{1-\beta/2}u(x)
\end{equation}
we choose a particular solution $\tilde{u} \in H^{1-\beta/2}_{C,l}(0,1)$ with $\tilde{u}(0) = 0$. Then, ${}^{R}_0D_x^{1-\beta/2} \tilde{u} =  {}^{C}_0D_x^{1-\beta/2}\tilde{u}$. This means that $\tilde{u} \in H^{1-\beta/2}_{R,l}(0,1) \cap H^{1-\beta/2}_{C,l}(0,1)$ is a weak solution to the Riemann--Liouville equation \eqref{FDE:RL} with the Caputo boundary condition \eqref{FDE:Cbc}. In other words, problem \eqref{FDE:RL} and \eqref{FDE:Cbc} has at least one weak solution $\tilde{u}$. 

Next, let $u$ be any weak solution to the problem. Then the difference $w := u - \tilde{u}$ satisfies the homogeneous Riemann--Liouville equation \eqref{FDE:RL} with the homogeneous Caputo boundary condition \eqref{FDE:Cbc}. We solve this equation to get ${}_0I_x^{\beta}w = C_0 x + C_1$. We apply ${}^{R}_0D_x^{\beta}$ to this equation and use Lemma \ref{lem:Aux3} and \eqref{FDE:e0} to find  
\begin{equation}\label{R_Cbc:e2}\begin{aligned}
&w = D {}^{}_0I_x^{1-\beta}\big(C_0 x + C_1 \big) = \f{C_0x^{1-\beta}}{\Gamma(2-\beta)} + \f{C_1}{\Gamma(1-\beta)x^{\beta}},\\
& Dw = \f{C_0}{\Gamma(1-\beta)x^\beta}+ \f{C_1}{\Gamma(-\beta) x^{1+\beta}}. \end{aligned} \end{equation}
In order for ${}^{C}_0D_x^{1-\beta}w$ to be defined, $C_1$ has to be chosen to be 0. Then ${}^{C}_0D_x^{1-\beta}w = C_0$. The homogeneous Caputo boundary condition concludes that $C_0 = 0$. Thus, $w \equiv 0$ and the uniqueness of the weak solution is proved.
\end{proof}

\begin{theorem}\label{thm:CaputoRbc} Let $0 < \beta < 1$, $\delta > 0$, and $f \in C^\delta[0,1]$ satisfy the constraint \eqref{ConCbc:e2}. Then the Caputo equation \eqref{FDE:Caputo} with the Riemann--Liouville boundary condition \eqref{FDE:Rbc} has a unique solution $u = {}^{C}_0 D^{1-\beta}_x w$, where $w \in C^{2+\delta}[0,1]$ with $w(0) = 0$. 
\end{theorem}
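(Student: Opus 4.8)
The plan is to treat this as a \emph{reducible} case, reducing the fractional problem \eqref{FDE:Caputo}--\eqref{FDE:Rbc} to a classical second-order problem for a potential $w$ through the substitution $u={}^{C}_0D_x^{1-\beta}w$, equivalently $w={}_0I_x^{1-\beta}u$. The structural fact that makes this work is that the normalization $w(0)=0$ forces the left Riemann--Liouville and Caputo derivatives of $w$ of order $1-\beta$ to coincide (Lemma~\ref{lem:Aux3}); this is precisely what lets one identify the Riemann--Liouville boundary operators, evaluated on $u={}^{C}_0D_x^{1-\beta}w$, with classical Neumann-type data for $w$.

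For existence, I would let $w$ be the unique solution of the classical problem $-D^2w=f$ in $(0,1)$, $w(0)=0$, $Dw(0)=a_0$. Since $f\in C^\delta[0,1]$, elementary ordinary-differential-equation theory gives $w\in C^{2+\delta}[0,1]$ with $w(0)=0$, and integrating $-D^2w=f$ over $(0,1)$ together with the constraint \eqref{ConCbc:e2} yields $Dw(1)=a_0-\langle f,1\rangle=a_1$, so the endpoint data are consistent. Putting $u:={}^{C}_0D_x^{1-\beta}w={}_0I_x^{\beta}Dw$ (well defined since $Dw\in C^{1+\delta}[0,1]$, with $u(0)=0$), one verifies — using the semigroup law \eqref{Aux1:e2}, the identities \eqref{Aux2:e1}--\eqref{Aux2:e2}, Lemma~\ref{lem:Aux3}, and $w(0)=0$ — that $u$ solves the Caputo equation \eqref{FDE:Caputo} and the boundary condition \eqref{FDE:Rbc}: one rewrites ${}^{C}_0D_x^{2-\beta}\!\bigl({}^{C}_0D_x^{1-\beta}w\bigr)$ in terms of $D^2w$ to recover $-f$, and expresses the traces ${}^{R}_0D_x^{1-\beta}u\big|_{x=0}$ and ${}^{R}_xD_1^{1-\beta}u\big|_{x=1}$ through $Dw(0)$ and $Dw(1)$ to recover $a_0$ and $a_1$.

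For uniqueness, conversely, given any solution $u$ of \eqref{FDE:Caputo}--\eqref{FDE:Rbc}, set $w:={}_0I_x^{1-\beta}u$; then $w(0)=0$ and, by Lemma~\ref{lem:Aux3}, ${}^{C}_0D_x^{1-\beta}w=u$. Applying ${}_0I_x^{1-\beta}$ to \eqref{FDE:Caputo} and using \eqref{Aux1:e2} and \eqref{Aux2:e1}--\eqref{Aux2:e2} shows $-D^2w=f$, while \eqref{FDE:Rbc} translates into $Dw(0)=a_0$ and $Dw(1)=a_1$. Hence $w$ solves the same classical problem, which under \eqref{ConCbc:e2} has a unique solution in $C^{2+\delta}[0,1]$ normalized by $w(0)=0$; therefore $u$ is unique and of the asserted form $u={}^{C}_0D_x^{1-\beta}w$.

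The hard part will be the two-way fractional-calculus bookkeeping. On the one side I expect the delicate step to be establishing ${}^{C}_0D_x^{2-\beta}\!\bigl({}^{C}_0D_x^{1-\beta}w\bigr)=-D^2w$ for $w\in C^{2+\delta}[0,1]$ with $w(0)=0$, keeping careful track of which of the formal identities relating $D\,{}_0I_x^{\beta}$, ${}_0I_x^{\beta}\,D$, ${}^{R}_0D_x^{1-\beta}$ and ${}^{C}_0D_x^{1-\beta}$ — and the $x^{\beta-1}$-type remainder terms they generate — are legitimate at this regularity. On the other side, one must check that the Riemann--Liouville boundary operators in \eqref{FDE:Rbc} are genuinely well defined on $u={}^{C}_0D_x^{1-\beta}w$ and return $a_0$, $a_1$; this is exactly where both hypotheses on $w$ are indispensable, the $C^{2+\delta}$ regularity controlling $D^2u$ near $x=0$ and the vanishing $w(0)=0$ removing the $x^{\beta-1}$ singularity that would otherwise make the left Riemann--Liouville trace of $u$ meaningless.
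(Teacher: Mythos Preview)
Your argument has a genuine gap, but it originates in two typos in the stated theorem rather than in your overall strategy. As printed, the statement refers to the Caputo equation \eqref{FDE:Caputo} and to the formula $u={}^{C}_0D_x^{1-\beta}w$; the paper's own proof, however, together with the introduction to the section, treats the \emph{conservative} Caputo equation \eqref{FDE:Conserv}, and the representation that actually emerges there is $u={}^{R}_0D_x^{\beta}w$ (equivalently ${}^{C}_0D_x^{\beta}w$, since $w(0)=0$). Taken literally, the printed statement is false: \S\ref{CounterCaputo} shows that \eqref{FDE:Caputo} with the Riemann--Liouville condition \eqref{FDE:Rbc} admits no solution whenever $a_0\ne 0$.

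Your existence step fails precisely at the point you flagged as delicate. The identity ${}^{C}_0D_x^{2-\beta}\bigl({}^{C}_0D_x^{1-\beta}w\bigr)=D^2w$ does not hold: Caputo derivatives of orders $2-\beta$ and $1-\beta$ do not compose to a second derivative. For $f\equiv -1$ and $w=x^2/2+a_0x$ your candidate is $u={}_0I_x^{\beta}Dw=x^{1+\beta}/\Gamma(2+\beta)+a_0x^{\beta}/\Gamma(1+\beta)$; then $D^2u$ contains the term $a_0(\beta-1)\Gamma(\beta)^{-1}x^{\beta-2}$, which is not locally integrable at $0$, so ${}^{C}_0D_x^{2-\beta}u={}_0I_x^{\beta}D^2u$ is undefined whenever $a_0\ne 0$. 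Your uniqueness step has the mirror problem: with $w={}_0I_x^{1-\beta}u$ one gets $D^2w={}_0I_x^{1-\beta}D^2u$ plus boundary remainders, not ${}_0I_x^{\beta}D^2u$, so $-{}^{C}_0D_x^{2-\beta}u=f$ does not translate into $-D^2w=f$.

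The paper instead proves the conservative version. It invokes Theorem~\ref{thm:RR} to obtain the solution $\hat u$ of the Riemann--Liouville problem \eqref{FDE:RL}--\eqref{FDE:Rbc}, sets $w:={}_0I_x^{\beta}\hat u$ (which solves the classical Neumann problem \eqref{RR_Char:e2}), normalizes $w(0)=0$, and verifies by a direct computation that ${}^{C}_0D_x^{1-\beta}\hat u=Dw={}^{R}_0D_x^{1-\beta}\hat u$; hence $\hat u$ also solves \eqref{FDE:Conserv}--\eqref{FDE:Rbc}. Uniqueness comes from solving the homogeneous conservative equation explicitly. If you correct the substitution to $u={}^{C}_0D_x^{\beta}w$ and target \eqref{FDE:Conserv}, your direct-verification route goes through and is essentially the paper's argument with the detour through Theorem~\ref{thm:RR} removed.
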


\begin{proof}
Theorem \ref{thm:RR} concludes that the Riemann--Liouville equation \eqref{FDE:RL} with the Riemann--Liouville boundary condition \eqref{FDE:Rbc} has a unique solution $\hat{u} \in H^{1-\beta,0}_{R,l}(0,1)$. Furthermore, $w := {}_0 I^{\beta}_x \hat{u}$ satisfies the classical Neumann boundary--value problem \eqref{RR_Char:e2}. The classical differential equation theory \cite{GilTru} concludes that $w \in C^{2+\delta}[0,1]$ is determined uniquely up to an arbitrary constant. That is, ${}_0 I^{\beta}_x \hat{u} = w + C$ with $w(0) = 0$. We apply ${}^{R}_0D_x^{\beta}$ to this equation to get
\[\hat{u} = D {}_0I_x^{1-\beta}(w+C) = D {}_0I_x^{1-\beta}w + \f{C}{\Gamma(1-\beta)x^{\beta}}.\] 
Clearly ${}^{R}_0D_x^{1-\beta}u = Dw$ for any $C \in \mathbb{R}$. But ${}^{C}_0D_x^{1-\beta}\hat{u}$
is defined only for $C = 0$. In this case.
\[ \begin{aligned} 
{}^{C}_0D_x^{1-\beta}\hat{u} & = {}_0I_x^{\beta}D D {}_0I_x^{1-\beta}w = {}_0I_x^{\beta}D  {}_0I_x^{1-\beta} D w  \\
& = {}_0I_x^{\beta}D \Big (\f{Dw(0)x^{1-\beta}}{\Gamma(2-\beta)} + \f1{\Gamma(2-\beta)} \int_0^x (x-s)^{1-\beta}D^2w(s)ds \Big) \\
& = {}_0I_x^{\beta} \Big (\f{Dw(0)}{\Gamma(1-\beta)x^\beta} + \f1{\Gamma(2-\beta)} \f{d}{dx}\int_0^x (x-s)^{1-\beta}D^2w(s)ds \Big) \\
& = Dw(0) + {}_0I_x^{\beta} \Bigl ( \f1{\Gamma(1-\beta)} \int_0^x (x-s)^{-\beta} D^2 w(s) ds \Big ) \\
& = Dw(0) + {}_0I_x^{\beta}  {}_0I_x^{1-\beta} D^2 w = Dw(0) + {}_0I^{-1}_x D^2 w \\
& = Dw(0) + {}_0I_x D^2 w = Dw.
\end{aligned} \] 
In short, for $C = 0$ we have ${}^{C}_0D_x^{1-\beta}\hat{u} = {}^{R}_0D_x^{1-\beta}\hat{u}$. In other words, $\hat{u}$ is a particular solution to the conservative Caputo FDE \eqref{FDE:Conserv} with the Riemann--Liouville boundary condition \eqref{FDE:Rbc}. That is, problem \eqref{FDE:Conserv} and \eqref{FDE:Rbc} has at least one solution.

Let $u$ be any solution to problem \eqref{FDE:Conserv} and \eqref{FDE:Rbc}. Then the difference $\hat{w} := u - \hat{u}$ satisfies the homogeneous conservative Caputo FDE \eqref{FDE:Conserv} with $f=0$ with the homogeneous Riemann--Liouville boundary condition \eqref{FDE:Rbc} with $a_0 = a_1 = 0$. Solving this problem yields
\[\begin{aligned}
&D\hat{w} = D {}_0I_x^{1-\beta}C_0 = D \Bigl (\f{C_0x^{1-\beta}}{\Gamma(2-\beta)} \Big), \quad
\hat{w} = \f{C_0x^{1-\beta}}{\Gamma(2-\beta)} + C_1, \\
& {}^{R}_0D_x^{1-\beta} \hat{w} = C_0 + \f{C_1}{\Gamma(\beta) x^{1-\beta}}. \end{aligned} \]
Enforcing the homogeneous Riemann--Liouville boundary condition \eqref{FDE:Rbc} reveals that $C_0 = C_1 = $0. Hence, $\hat{w} \equiv 0$ and the uniqueness of the solution is proved.
\end{proof}

\section{The remaining Neumann boundary--value problems}\label{Counter}
\setcounter{section}{8}\setcounter{equation}{0}

We show that the remaining combinations of the FDEs and the Neumann boundary conditions do not admit a solution in general.

\subsection{The Caputo FDE} \label{CounterCaputo}

Let $u$ be any solution to \eqref{FDE:Caputo} with $f = -1$. We apply ${}^{R}_0D_x^{\beta}$ to both sides of this equation and use Lemma \ref{lem:Aux3} and \eqref{FDE:e0} to obtain 
\begin{equation}\label{7A:e1} \begin{aligned}
&D^2 u = D {}_0I_x^{1-\beta} 1 = D \Bigl (\f{x^{1-\beta}}{\Gamma(2-\beta)} \Big) = \f{1}{\Gamma(1-\beta)x^{\beta}},\\
&Du = \f{x^{1-\beta}}{\Gamma(2-\beta)} + C_0, \quad u = \f{x^{2-\beta}}{\Gamma(3-\beta)} + C_0 x + C_1,\\
& {}^{C}_0D_x^{1-\beta}u = x + \f{C_0x^\beta}{\Gamma(1+\beta)}, \quad
{}^{R}_0D_x^{1-\beta}u = x + \f{C_0x^\beta}{\Gamma(1+\beta)} + \f{C_1}{\Gamma(\beta)x^{1-\beta}}.
\end{aligned} \end{equation}
In the current context, the constraint \eqref{Caputo:e2} reduces to $a_1 - a_0 = 1/\Gamma(2-\beta)$. With $C_0 = a_0$, $u$ satisfies the classical Neumann boundary condition \eqref{FDE:Nbc}. This agrees with Theorem \ref{thm:Caputo}. However, as long as $a_0 \neq 0$, $u$ cannot satisfy the Caputo boundary condition \eqref{FDE:Cbc} or the Riemann--Liouville boundary condition \eqref{FDE:Rbc}. In other words, the Caputo FDE \eqref{FDE:Caputo} with the boundary condition \eqref{FDE:Cbc} or \eqref{FDE:Rbc} does not admit a solution, in general.

\subsection{The conservative Caputo FDE} 

Let $u$ be any solution to \eqref{FDE:Conserv} with $f=-1$. We get ${}_0I_x^{\beta}Du = x + C_0$. We apply ${}^{R}_0D_x^{\beta}$ to this equation and use Lemma \ref{lem:Aux3} and \eqref{FDE:e0} to find  
\begin{equation}\label{7B:e1}\begin{aligned}
&Du = D {}_0I_x^{1-\beta}(x+C_0) = D \Bigl (\f{x^{2-\beta}}{\Gamma(3-\beta)}+\f{C_0x^{1-\beta}}{\Gamma(2-\beta)} \Big),\\
&u = \f{x^{2-\beta}}{\Gamma(3-\beta)}+\f{C_0x^{1-\beta}}{\Gamma(2-\beta)} + C_1,
\quad Du = \f{x^{1-\beta}}{\Gamma(2-\beta)}+\f{C_0}{\Gamma(1-\beta)x^\beta},\\
& {}^{C}_0D_x^{1-\beta} u = x + C_0, \quad {}^{R}_0D_x^{1-\beta} u = x + C_0 + \f{C_1}{\Gamma(\beta) x^{1-\beta}}. \end{aligned} \end{equation}
The constraint \eqref{ConCbc:e2} in Theorem \ref{thm:ConCbc} reduces to 
\begin{equation}\label{7B:e2} a_1 - a_0 = 1.\end{equation}
It is clear that with the choice of $C_0 = a_0$, $u$ satisfies the Caputo boundary condition \eqref{FDE:Cbc}. As proved in Theorem \ref{thm:ConCbc}. It is also clear that $u$ cannot satisfy the classical Neumann boundary condition \eqref{FDE:Nbc} as long as $a_0 \neq 0$. Thus, problem \eqref{FDE:Conserv} and \eqref{FDE:Nbc} does not admit a solution in general. Enforcing the Riemann--Liouville boundary condition \eqref{FDE:Rbc} at $x=0$ reveals that $C_0 = a_0 $ and $C_1 = 0$, and a unique solution of \begin{equation}\label{7B:e3} u = \f{x^{2-\beta}}{\Gamma(3-\beta)} + \f{a_0x^{1-\beta}}{\Gamma(2-\beta)} \end{equation}
is obtained, as proved in Theorem \ref{thm:CaputoRbc}.

\subsection{The Riemann--Liouville FDE} 

Let $u$ be any solution to \eqref{FDE:RL} with $f=-1$. We get ${}_0I_x^{\beta}u = x^2/2 + C_0 x + C_1$. We apply ${}^{R}_0D_x^{\beta}$ to this equation and use Lemma \ref{lem:Aux3} and \eqref{FDE:e0} to find  
\begin{equation}\label{7C:e1}\begin{array}{rl}
\ds u & \ds = D {}_0I_x^{1-\beta}\Big(\f{x^2}2 + C_0 x + C_1 \Big) \\
&\ds = \f{x^{2-\beta}}{\Gamma(3-\beta)} + \f{C_0x^{1-\beta}}{\Gamma(2-\beta)} + \f{C_1}{\Gamma(1-\beta)x^{\beta}},\\
{}^{R}_0D_x^{1-\beta}u & = x + C_0, \\
Du & \ds = \f{x^{1-\beta}}{\Gamma(2-\beta)}+\f{C_0}{\Gamma(1-\beta)x^\beta}+ \f{C_1}{\Gamma(\beta) x^{1+\beta}}.
\end{array} \end{equation}
The constraint \eqref{7B:e2} is still true. It is clear that with the choice of $C_0 = a_0$, $u$ satisfies the Caputo boundary condition \eqref{FDE:Rbc}. That is, problem \eqref{FDE:RL} and \eqref{FDE:Rbc} has a solution as proved in Theorem \ref{thm:RR}. On the other hand, It is also clear that $u$ cannot satisfy the classical Neumann boundary condition \eqref{FDE:Nbc} as long as $a_0 \neq 0$. Thus, problem \eqref{FDE:RL} and \eqref{FDE:Nbc} does not admit a solution in general.

We note from the expression of $Du$ in \eqref{7C:e1} that ${}^{C}_0D_x^{1-\beta}u$ is defined if and only if $C_1 = 0$. In this case, ${}^{C}_0D_x^{1-\beta}u = x + C_0$. The boundary condition \eqref{FDE:Cbc} yields $C_0 = a_0$. The constraint \eqref{7B:e2} is automatically satisfied. Thus, a unique solution of the form \eqref{7B:e3} is obtained, as proved in Theorem \ref{thm:R_Cbc}. 

We end this section by showing that the nonsolvability of the four remaining combinations of FDEs with the Neumann boundary conditions as shown by the counterexample holds in general. We take the conservative Caputo FDE \eqref{FDE:Conserv} and the classical Neumann boundary condition \eqref{FDE:Nbc} as an example to demonstrate the idea. Let $0 < \beta < 1$ and $f \in H^1(0,1)$. Then the conservative Caputo FDE \eqref{FDE:Conserv} with the classical Neumann boundary condition \eqref{FDE:Nbc} is solvable if and only if $a_0 = 0$. 

In fact, let $u$ be the solution to this problem. Then $w := Du$ would satisfy the following inhomogeneous Dirichlet boundary-value problem of the Riemann--Liouville FDE
\begin{equation}\label{ConNbc:e1}\left\{\begin{aligned}
& -  \bigl({}^{R}_0D_x^{2-\beta} w \bigr) = Df(x), \quad  x \in (0,1), \\
&  v(0) = a_0, \quad v(1) = a_1.
\end{aligned} \right.
\end{equation}
Let $w_b := a_0 (1-x) + a_1 x$ and $w_f := w - w_b$. Then we can rewrite \eqref{ConNbc:e1}  as 
\begin{equation}\label{ConNbc:e2}\left\{\begin{aligned}
& -  \bigl({}^{R}_0D_x^{2-\beta} w_f \bigr) = Df(x) + f_b , \quad  x \in (0,1), \\
&  v(0) = a_0, \quad v(1) = a_1.
\end{aligned} \right.
\end{equation}
where $f_b$ is defined by
\begin{equation}\label{GAL_RL:e4}
f_b(x) := \f{(a_1 - a_0) (1+\beta) }{\Gamma(\beta)x^{1-\beta}} + \frac{a_0(\beta-1)}{\Gamma(\beta)x^{2-\beta}}.
\end{equation}
It was proved in \cite{ErvRoo} that problem \eqref{ConNbc:e2} has a unique weak solution $w \in H^{1-\beta/2}_0(0,1)$ if and only if the right-hand side of the equation is in $H^{-(1-\beta/2)}(0,1)$. Note that the first term on the right-hand side of \eqref{ConNbc:e2} is in $L^1(0,1) \hookrightarrow H^{-(1-\beta/2)}(0,1)$. The second term is a distributional derivative of a constant multiple of the first term, which is not in $H^{-(1-\beta/2)}(0,1)$. Hence, this problem has a unique solution if and only if $a_0 = 0$.

\section*{Acknowledgements}


This work was supported in part by the OSD/ARO MURI Grant W911NF-15-1-0562, by the National Science Foundation under Grants  EAR-0934747, DMS-1216923 and DMS-1620194, and by the National Natural Science Foundation of China under Grants 11201485, 91130010, 11471194 and 11571115.




 \bigskip \smallskip

 \it

 \noindent

$^1$ Department of Mathematics, University of South Carolina\\
Columbia, South Carolina 29208, USA \\[4pt]
  e-mail: hwang@math.sc.edu

$^2$ Department of Mathematics, East China Normal University\\
 Shanghai, 200241, China \\[4pt]
  e-mail: dpyang@math.ecnu.edu.cn


\begin{thebibliography}{99}
\normalsize

\bibitem{AdaFou} R.A.~Adams and J.J.F.~Fournier, {\it Sobolev Spaces}, Elsevier, San Diego, 2003.



\bibitem{BurHal} K.~Burrage, N.~Hale, and D.~Kay: An efficient implementation of an implicit FEM scheme for fractional-in-space reaction-diffusion equations. \emph{SIAM J. Sci. Comput.} \textbf{34} (2012), A2145--A2172.




\bibitem{CheLia} W.~Chen, Y.~Liang, S.~Hu, and H.~Sun, Fractional derivative anomalous diffusion equation modeling prime number distribution, {\it Frac. Cal. Appl. Anal.} \textbf{18} (2015), 789--798.



\bibitem{DelCar} D. del-Castillo-Negrete, B.A. Carreras, and V. E. Lynch, Fractional diffusion in plasma turbulence.
\emph{Phys. Plasmas} \textbf{11} (2004), 3854.





\bibitem{ErvRoo} V.J.~Ervin and J.P.~Roop, Variational formulation for the stationary fractional advection dispersion equation. \emph{Numer. Methods. Partial Differential Eq.} \textbf{22} (2005), 558--576.


\bibitem{GilTru} D.~Gilbarg and N.~Trudinger, {\it Elliptic Partial Differential Equations of Second Order}, (2nd ed.), Springer, Berlin, 1983.




\bibitem{Hil} R.~Hilfer, \emph{Applications of Fractional Calculus in Physics}. Word Scientific, Singepore, 2000.



\bibitem{JinLazPas} B.~Jin, R.~Lazarov, J.~Pasciak, and W.~Rundell, Variational formulation of problems involving fractional order differential operators, \emph{Math. Comp.} \textbf{84} (2015), 2665--2700.








\bibitem{Mag} R.L.~Magin, \emph{Fractional Calculus in Bioengineering}. Begell House Publishers, 2006.

\bibitem{MaiRab} F.~Mainardi, M.~Raberto, R.~Gorenflo, and E.~Scalas, Fractional calculus and continuous-time finance II: the waiting-time distribution. \emph {Physica A} \textbf{287} (2000), 468--481.



\bibitem{MeeSik} M.M.~Meerschaert and A.~Sikorskii, \emph{Stochastic Models for Fractional Calculus}. De Gruyter Studies in Mathematics, Vol. \textbf{43}, Walter de Gruyter, Berlin/Boston, 2012.



\bibitem{MetKla04} R.~Metler and J.~Klafter, The restaurant at the end of random walk: recent developments in the description of anomalous transport by fractional dynamics. \emph{J. Phys. A.} \textbf{37} (2004), R161--R208.

\bibitem{OldSpa} K.B. Oldham, J. Spanier, \emph{The Fractional Calculus}. Academic Press, New York, 1974.


\bibitem{Pod} I.~Podlubny, \emph{Fractional Differential Equations}. Academic Press, New York, 1999.


\bibitem{SamKil} S.~Samko, A.~Kilbas, and O.~Marichev, \emph{Fractional Integrals and Derivatives: Theory and Applications}. Gordon and Breach, London, 1993.












\bibitem{WanYan} H.~Wang and D.~Yang, Wellposedness of variable-coefficient conservative fractional elliptic differential equations. \emph{SIAM J. Numer. Anal.} \textbf{51} (2013), 1088--1107.

\bibitem{WanYanZhu14} H.~Wang, D.~Yang, S.~Zhu, Inhomogeneous Dirichlet boundary-value problems of space-fractional diffusion equations and their finite element approximations, {\it SIAM J. Numer. Anal.}, 52 (2014), 1292--1310. 

\bibitem{WanYanZhu15} H.~Wang, D.~Yang, and S.~Zhu, A Petrov-Galerkin finite element method for variable-coefficient fractional diffusion equations,  {\it Comput. Methods Appl. Mech. Engrg.}, 290 (2015), 45--56.

\bibitem{WanZha} H.~Wang and X.~Zhang, A high-accuracy preserving spectral Galerkin method for the Dirichlet boundary-value problem of variable-coefficient conservative fractional diffusion equations, {\it J. Comput. Phy.}, 281 (2015), 67--81.





\end{thebibliography}
\end{document}